\providecommand{\tabularnewline}{\\}
\theoremstyle{plain}
\newtheorem{thm}{\protect\theoremname}[section]
\theoremstyle{plain}
\newtheorem{prop}[thm]{\protect\propositionname}
\theoremstyle{plain}
\newtheorem{lem}[thm]{\protect\lemmaname}
\theoremstyle{remark}
\newtheorem{claim}[thm]{\protect\claimname}
\theoremstyle{plain}
\newtheorem{conjecture}[thm]{\protect\conjecturename}
\tikzstyle{pathdefault}=[draw, line width=1, solid, color=black]
\tikzstyle{nodedefault}=[circle, inner sep=1.5, fill=black]
\tikzstyle{empty}=[]
\tikzstyle{nodeellipsis}=[circle, inner sep=0.5, fill=black]
\tikzstyle{pathcolor1}=[draw, line width=1.3, densely dashed, color=red]
\tikzstyle{pathcolor2}=[draw, line width=1.6, densely dotted, color=blue]
\tikzstyle{pathcolorlight}=[draw, line width=1, dotted, color=lightgray]
\tikzstyle{arbpathcolor0}=[line width=1, dashdotted, color=black]
\tikzstyle{arbpathcolor1}=[line width=1, densely dashed, color=red]
\tikzstyle{arbpathdefault}=[line width=1, densely dotted, color=blue]
\newcounter{id}
\newcommand{\drawlinedotswithstyle}[4]{
 \def\x{{#3}}
 \def\y{{#4}}
 \tikzstyle{thispathstyle}=[#1]
 \tikzstyle{thisnodestyle}=[#2]
 \setcounter{id}{-1} 
 \foreach \j in {#3}{\stepcounter{id}} 
 \foreach \i in {1,...,\the\value{id}}{  
  \path[thispathstyle] (\x[\i],\y[\i]) --(\x[\i-1],\y[\i-1]); 
 }
 \foreach \i in {1,...,\the\value{id}}{  
  \node[thisnodestyle] at (\x[\i],\y[\i]) {}; 
 }
 \node[thisnodestyle] at (\x[0],\y[0]) {}; 
}
\DeclareDocumentCommand{\drawlinedots}{ O{pathdefault} O{nodedefault} m m}{\drawlinedotswithstyle{#1}{#2}{#3}{#4}}
\let\originalleft\left
\let\originalright\right
\renewcommand{\left}{\mathopen{}\mathclose\bgroup\originalleft}
\renewcommand{\right}{\aftergroup\egroup\originalright}
\definecolor{davidsonred}{HTML}{AC1A2F} 
\definecolor{green}{RGB}{0, 180, 0}
\definecolor{yellow}{RGB}{180, 180, 0}
\newcommand{\leqnomode}{\tagsleft@true\let\veqno\@@leqno}
\newcommand{\reqnomode}{\tagsleft@false\let\veqno\@@eqno}
\providecommand{\claimname}{Claim}
\providecommand{\conjecturename}{Conjecture}
\providecommand{\lemmaname}{Lemma}
\providecommand{\propositionname}{Proposition}
\providecommand{\theoremname}{Theorem}
\begin{document}
\global\long\def\iDes{\operatorname{iDes}}%

\global\long\def\des{\operatorname{des}}%

\global\long\def\maj{\operatorname{maj}}%

\global\long\def\Pk{\operatorname{Pk}}%

\global\long\def\pk{\operatorname{pk}}%

\global\long\def\lpk{\operatorname{lpk}}%

\global\long\def\std{\operatorname{std}}%

\global\long\def\inv{\operatorname{inv}}%

\global\long\def\Des{\operatorname{Des}}%

\global\long\def\Comp{\operatorname{Comp}}%

\global\long\def\occ{\operatorname{occ}}%

\global\long\def\mk{\operatorname{mk}}%

\global\long\def\Av{\operatorname{Av}}%

\global\long\def\ides{\operatorname{ides}}%

\global\long\def\imaj{\operatorname{imaj}}%

\global\long\def\icomaj{\operatorname{icomaj}}%

\global\long\def\comaj{\operatorname{comaj}}%

\global\long\def\ipk{\operatorname{ipk}}%

\global\long\def\ilpk{\operatorname{ilpk}}%

\global\long\def\st{\operatorname{st}}%

\global\long\def\ist{\operatorname{ist}}%

\title{A lifting of the Goulden\textendash Jackson cluster method to the
Malvenuto\textendash Reutenauer algebra}
\author{Yan Zhuang\\
Department of Mathematics and Computer Science\\
Davidson College\texttt{}~\\
\texttt{yazhuang@davidson.edu}}
\maketitle
\begin{abstract}
The Goulden\textendash Jackson cluster method is a powerful tool for
counting words by occurrences of prescribed subwords, and was adapted
by Elizalde and Noy for counting permutations by occurrences of prescribed
consecutive patterns. In this paper, we lift the cluster method for
permutations to the Malvenuto\textendash Reutenauer algebra. Upon
applying standard homomorphisms, our result specializes to both the
cluster method for permutations as well as a $q$-analogue which keeps
track of the inversion number statistic. We construct additional homomorphisms
using the theory of shuffle-compatibility, leading to further specializations
which keep track of various ``inverse statistics'', including the
inverse descent number, inverse peak number, and inverse left peak
number. This approach is then used to derive formulas for counting
permutations by occurrences of two families of consecutive patterns\textemdash monotone
patterns and transpositional patterns\textemdash refined by these
statistics.
\end{abstract}
\textbf{\small{}Keywords:}{\small{} permutation statistics, consecutive
patterns, Goulden\textendash Jackson cluster method, Malvenuto\textendash Reutenauer
algebra, shuffle-compatibility}{\let\thefootnote\relax\footnotetext{2020 \textit{Mathematics Subject Classification}. Primary 05A15; Secondary 05A05, 05E05.}}\tableofcontents{}

\section{Introduction}

Let $\mathfrak{S}_{n}$ denote the symmetric group of permutations
on the set $[n]\coloneqq\{1,2,\dots,n\}$ (where $\mathfrak{S}_{0}$
consists of the empty permutation), and let $\mathfrak{S}\coloneqq\bigsqcup_{n=0}^{\infty}\mathfrak{S}_{n}$.
We write permutations in one-line notation\textemdash that is, $\pi=\pi_{1}\pi_{2}\cdots\pi_{n}$\textemdash and
the $\pi_{i}$ are called \textit{letters} of $\pi$. The \textit{length}
of $\pi$ is the number of letters in $\pi$, so that $\pi$ has length
$n$ whenever $\pi\in\mathfrak{S}_{n}$.

For a sequence of distinct integers $w$, the \textit{standardization}
of $w$\textemdash denoted $\std(w)$\textemdash is defined to be
the permutation in $\mathfrak{S}$ obtained by replacing the smallest
letter of $w$ with 1, the second smallest with 2, and so on. As an
example, we have $\std(73184)=42153$. Given permutations $\pi\in\mathfrak{S}_{n}$
and $\sigma\in\mathfrak{S}_{m}$, we say that $\pi$ \textit{contains}
$\sigma$ (as a \textit{consecutive pattern}) if $\std(\pi_{i}\pi_{i+1}\cdots\pi_{i+m-1})=\sigma$
for some $i\in[n-m+1]$, and in this case we call $\pi_{i}\pi_{i+1}\cdots\pi_{i+m-1}$
an \textit{occurrence} of $\sigma$ (as a consecutive pattern) in
$\pi$. For instance, the permutation $315497628$ has three occurrences
of the consecutive pattern $213$, namely $315$, $549$, and $628$.
On the other hand, $137258469$ has no occurrences of $213$.

Let $\occ_{\sigma}(\pi)$ denote the number of occurrences of $\sigma$
in $\pi$. If $\occ_{\sigma}(\pi)=0$, then we say that $\pi$ \textit{avoids}
$\sigma$ (as a consecutive pattern). If $\Gamma\subseteq\mathfrak{S}$,
then we let $\mathfrak{S}_{n}(\Gamma)$ denote the subset of permutations
in $\mathfrak{S}_{n}$ avoiding every permutation in $\Gamma$ as
a consecutive pattern. When $\Gamma$ consists of a single permutation
$\sigma$, we shall simply write $\mathfrak{S}_{n}(\sigma)$ as opposed
to $\mathfrak{S}_{n}(\{\sigma\})$. (We use the same convention for
other notations involving a set $\Gamma$ of permutations when $\Gamma$
is a singleton.) As observed earlier, we have $137258469\in\mathfrak{S}_{9}(213)$.

For the rest of this paper, the notions of occurrence and avoidance
of patterns in permutations always refer to consecutive patterns unless
otherwise stated.

The study of consecutive patterns in permutations, initiated by Elizalde
and Noy \cite{Elizalde2003} in 2003, extends the study of classical
patterns in permutations originating in the work of Simion and Schmidt
\cite{Simion1985}. Consecutive patterns in permutations are analogous
to consecutive subwords in words, where repetition of letters is allowed.
In the latter realm, the cluster method of Goulden and Jackson \cite{Goulden1979}
provides a very general formula expressing the generating function
for words by occurrences of prescribed subwords in terms of a ``cluster
generating function'', which is easier to compute. By setting the
variable keeping track of occurrences to zero, this yields a powerful
approach for counting words avoiding a prescribed set of subwords.
In 2012, Elizalde and Noy \cite{Elizalde2012} adapted the Goulden\textendash Jackson
cluster method to the setting of permutations, which they used to
obtain differential equations satisfied by $\omega_{\sigma}(s,x)=(\sum_{n=0}^{\infty}\sum_{\pi\in\mathfrak{S}_{n}}s^{\occ_{\sigma}(\pi)}x^{n}/n!)^{-1}$
for various families of consecutive patterns $\sigma$, including
``monotone patterns'', ``chain patterns'', and ``non-overlapping patterns''.
Solving these differential equations for $\omega_{\sigma}(s,x)$ then
allows one to count permutations by the number of occurrences of $\sigma$.

Over the past decade, Elizalde and Noy's adaptation of the cluster
method for permutations has become a standard tool in the study of
consecutive patterns; see \cite{Beaton2017,Crane2018,Dwyer2018,Elizalde2013,Elizalde2016,Lee2019}
for a selection of references. One recent development is a $q$-analogue
of the cluster method for permutations which also keeps track of the
inversion number statistic. This $q$-cluster method, due to Elizalde,
was first mentioned in his survey \cite{Elizalde2016} on consecutive
patterns, and was applied to monotone patterns and non-overlapping
patterns by Crane, DeSalvo, and Elizalde \cite{Crane2018} in their
study of the Mallows distribution.

To explain the philosophy which guides our work, let us briefly discuss
a paper by Josuat-Verg\`es, Novelli, and Thibon \cite{Josuat-Verges2012},
in which the authors study alternating permutations (and their analogues
in other Coxeter groups) from the perspective of combinatorial Hopf
algebras. Their starting point is Andr\'e's \cite{Andre1881} famous
exponential generating function $\sec x+\tan x$ for the number of
alternating permutations. The authors note that Andr\'e's formula
has a natural lifting in the Malvenuto\textendash Reutenauer algebra
$\mathbf{FQSym}$, a Hopf algebra whose basis elements correspond
to permutations and whose multiplication encodes ``shifted concatenation''
of permutations. They then recover Andr\'e's formula by applying
a certain homomorphism $\phi$ to its lifting in $\mathbf{FQSym}$,
and in their words:
\begin{quote}
``Such a proof is not only illuminating, it says much more than the
original statement. For example, one can now replace $\phi$ by more
complicated morphisms, and obtain generating functions for various
statistics on alternating permutations.''
\end{quote}
A similar approach to permutation enumeration was taken in a series
of papers by Gessel and the present author \cite{Gessel2019,Gessel2014,Zhuang2016,Zhuang2017},
but instead utilizing homomorphisms on noncommutative symmetric functions.

The main result of this present paper is an analogous lifting of the
Goulden\textendash Jackson cluster method for permutations to the
Malvenuto\textendash Reutenauer algebra. Since the basis elements
of the Malvenuto\textendash Reutenauer algebra correspond to permutations,
our cluster method in $\mathbf{FQSym}$ is in a sense the most general
cluster method possible for permutations. By applying the same homomorphism
$\phi$ used by Josuat-Verg\`es\textendash Novelli\textendash Thibon
to our generalized cluster method, we can recover Elizalde and Noy's
cluster method for permutations, and we can use another homomorphism
to recover Elizalde's $q$-analogue. We also construct other homomorphisms
which lead to new specializations of our cluster method that can be
used to count permutations by occurrences of prescribed patterns while
keeping track of other permutation statistics.

\subsection{Permutation statistics}

The permutation statistics that we shall consider are the ``inverses''
of several classical permutation statistics related to descents and
peaks: the \textit{descent number} $\des$, the \textit{major index}
$\maj$, the \textit{comajor index} $\comaj$, the \textit{peak number}
$\pk$, and the \textit{left peak number} $\lpk$. We define these
statistics below.
\begin{itemize}
\item We call $i\in[n-1]$ a \textit{descent} of $\pi\in\mathfrak{S}_{n}$
if $\pi_{i}>\pi_{i+1}$. Then $\des(\pi)$ is defined to be the number
of descents of $\pi$, and $\maj(\pi)$ the sum of all descents of
$\pi$. In other words, if $\Des(\pi)\coloneqq\{\,i\in[n-1]:i\text{ is a descent of }\pi\,\}$\textemdash that
is, $\Des(\pi)$ is the \textit{descent set} of $\pi$\textemdash then
\[
\des(\pi)\coloneqq\left|\Des(\pi)\right|\quad\text{and\ensuremath{\quad}}\maj(\pi)\coloneqq\sum_{i\in\Des(\pi)}i.
\]
The comajor index $\comaj$ is a variant of the major index $\maj$,
and is defined by
\begin{equation}
\comaj(\pi)\coloneqq\sum_{i\in\Des(\pi)}(n-i)=n\text{\ensuremath{\des}}(\pi)-\text{\ensuremath{\maj}}(\pi).\label{e-comajdesmaj}
\end{equation}
\item We call $i\in\{2,3,\dots,n-1\}$ a \textit{peak} of $\pi\in\mathfrak{S}_{n}$
if $\pi_{i-1}<\pi_{i}>\pi_{i+1}$. Then $\pk(\pi)$ is defined to
be the number of peaks of $\pi$.
\item We call $i\in[n-1]$ a \textit{left peak} of $\pi\in\mathfrak{S}_{n}$
if $i$ is a peak of $\pi$, or if $i=1$ and $i$
is a descent of $\pi$. Then $\lpk(\pi)$ is defined to be the number
of left peaks of $\pi$.\footnote{Equivalently, $\lpk(\pi)$ is the number of peaks of the permutation
$0\pi$ obtained by prepending 0 to $\pi$.}
\end{itemize}
For example, if $\pi=72163584$, then we have $\Des(\pi)=\{1,2,4,7\}$,
$\des(\pi)=4$, $\maj(\pi)=14$, $\comaj(\pi)=18$, $\pk(\pi)=2$,
and $\lpk(\pi)=3$. We note that, in the language of consecutive patterns,
descents correspond to occurrences of $21$ and peaks correspond to
occurrences of $132$ and $231$.

Given a permutation statistic $\st$, we define its \textit{inverse
statistic} $\ist$ by $\ist(\pi)\coloneqq\st(\pi^{-1})$. Continuing
with the example from above, the inverse of $\pi$ is $\pi^{-1}=32586417$,
so we have $\iDes(\pi)=\{1,4,5,6\}$, $\ides(\pi)=4$, $\imaj(\pi)=16$,
$\icomaj(\pi)=16$, $\ipk(\pi)=1$, and $\ilpk(\pi)=2$. While $\st$
and $\ist$ are obviously equidistributed over $\mathfrak{S}_{n}$,
it is worth studying the joint distribution of $\ist$ and other permutation
statistics over $\mathfrak{S}_{n}$, or the distribution of $\ist$
over restricted sets of permutations (such as pattern avoidance classes).
For instance, Garsia and Gessel \cite{Garsia1979} studied the joint
distribution of $\des$, $\ides$, $\maj$, and $\imaj$ over $\mathfrak{S}_{n}$.

Let $\Gamma$ be a set of consecutive patterns and $\occ_{\Gamma}(\pi)$
the number of occurrences in $\pi$ of patterns in $\Gamma$. In this
paper, we will consider the polynomials {\allowdisplaybreaks 
\begin{align*}
A_{\Gamma,n}^{(\ides,\imaj)}(s,t,q) & \coloneqq\sum_{\pi\in\mathfrak{S}_{n}}s^{\occ_{\Gamma}(\pi)}t^{\ides(\pi)+1}q^{\imaj(\pi)},\\
A_{\Gamma,n}^{(\ides,\icomaj)}(s,t,q) & \coloneqq\sum_{\pi\in\mathfrak{S}_{n}}s^{\occ_{\Gamma}(\pi)}t^{\ides(\pi)+1}q^{\icomaj(\pi)},\\
A_{\Gamma,n}^{\ides}(s,t) & \coloneqq\sum_{\pi\in\mathfrak{S}_{n}}s^{\occ_{\Gamma}(\pi)}t^{\ides(\pi)+1},\\
P_{\Gamma,n}^{\ipk}(s,t) & \coloneqq\sum_{\pi\in\mathfrak{S}_{n}}s^{\occ_{\Gamma}(\pi)}t^{\ipk(\pi)+1},\text{ and}\\
P_{\Gamma,n}^{\ilpk}(s,t) & \coloneqq\sum_{\pi\in\mathfrak{S}_{n}}s^{\occ_{\Gamma}(\pi)}t^{\ilpk(\pi)}
\end{align*}
}where $n\geq1$, and with each of these polynomials defined to be
1 when $n=0$. These polynomials give the joint distribution of the
occurrence statistic $\occ_{\Gamma}$ along with each of the statistics
$(\ides,\imaj)$, $(\ides,\icomaj)$, $\ides$, $\ipk$, and $\ilpk$.
Setting $s=0$ in any of these polynomials then gives the distribution
of the corresponding statistic over the pattern avoidance class $\mathfrak{S}_{n}(\Gamma)$.
For convenience, let us define $A_{\Gamma,n}^{(\ides,\imaj)}(t,q)\coloneqq A_{\Gamma,n}^{(\ides,\imaj)}(0,t,q)$
and the polynomials $A_{\Gamma,n}^{(\ides,\icomaj)}(t,q)$, $A_{\Gamma,n}^{\ides}(t)$,
$P_{\Gamma,n}^{\ipk}(t)$, and $P_{\Gamma,n}^{\ilpk}(t)$ analogously.

The reason why we consider the statistics $(\ides,\imaj)$, $(\ides,\icomaj)$,
$\ides$, $\ipk$, and $\ilpk$ is because they are inverses of ``shuffle-compatible''
statistics. Roughly speaking, a permutation statistic $\st$ is shuffle-compatible
if the distribution of $\st$ over the set of shuffles of two permutations
$\pi$ and $\sigma$ depends only on $\st(\pi)$, $\st(\sigma)$,
and the lengths of $\pi$ and $\sigma$. (See Section 2.3 for precise
definitions.) If $\st$ is shuffle-compatible and is a coarsening
of the descent set, then $\st$ induces a quotient of the algebra
$\mathrm{QSym}$ of quasisymmetric functions, denoted ${\mathcal A}_{\st}$.
By composing the quotient map from $\mathrm{QSym}$ to ${\mathcal A}_{\st}$
with the canonical surjection from $\mathbf{FQSym}$ to $\mathrm{QSym}$,
we obtain a homomorphism on $\mathbf{FQSym}$ which can be used to
count permutations by the corresponding inverse statistic. Applying
these homomorphisms to our generalized cluster method in $\mathbf{FQSym}$
yields specializations that refine by the statistics $(\ides,\icomaj)$,
$\ides$, $\ipk$, and $\ilpk$.\footnote{We do not explicitly give a specialization for $(\ides,\imaj)$, but
one can be obtained using the one for $(\ides,\icomaj)$ and the formula
$\text{\ensuremath{\imaj}}(\pi)=n\text{\ensuremath{\ides}}(\pi)-\text{\ensuremath{\icomaj}}(\pi)$,
which is equivalent to (\ref{e-comajdesmaj}).}

\subsection{Outline}

The structure of this paper is as follows. Section 2 is devoted to
background material. We first give a brief expository account of the
Goulden\textendash Jackson cluster method, both for words and for
permutations. Then, we define quasisymmetric functions and the Malvenuto\textendash Reutenauer
algebra, and review some basic symmetries on permutations (reversal,
complementation, and reverse-complementation) which will play a role
in our work.

The focus of Section 3 is on our main result, the cluster method in
Malvenuto\textendash Reutenauer. We prove our generalized cluster
method and show how it specializes to Elizalde and Noy's cluster method
for permutations as well as its $q$-analogue. In this section, we
also use the theory of shuffle-compatibility to construct homomorphisms
which we then use to obtain further specializations of our generalized
cluster method for the statistics $(\ides,\icomaj)$, $\ides$, $\ipk$,
and $\ilpk$.

In Sections 4 and 5, we apply our general results from Section 3 to
produce formulas for the polynomials $A_{\sigma,n}^{\ides}(s,t)$,
$P_{\sigma,n}^{\ipk}(s,t)$, and $P_{\sigma,n}^{\ilpk}(s,t)$\textemdash and
their $s=0$ evaluations\textemdash where $\sigma$ is a specific
type of consecutive pattern. Section 4 focuses on \textit{monotone
patterns}, i.e., the patterns $12\cdots m$ and $m\cdots21$. Section
5 focuses on the patterns $12\cdots(a-1)(a+1)a(a+2)(a+3)\cdots m$
where $m\geq5$ and $2\leq a\leq m-2$; these patterns were considered
in \cite{Elizalde2012} as a subfamily of ``chain patterns'', and
here we call them \textit{transpositional patterns} because $12\cdots(a-1)(a+1)a(a+2)(a+3)\cdots m$
is precisely the elementary transposition $(a,a+1)$. Most of our
formulas involve the Hadamard product operation on formal power series,
although some ``Hadamard product-free'' formulas are obtained for
monotone patterns. In the case of monotone patterns, we also give
a formula for counting $12\cdots m$-avoiding permutations by inverse
descent number and inverse major index. 

We conclude this paper in Section 6 with a brief discussion of ongoing
work and future directions of research. See \cite{Zhuang2022} for an extended abstract summarizing the results of this paper, as well as \cite{Zhuang2021a} for
proofs of two observations (Claims \ref{cl-ipk} and \ref{cl-ilpk})
which are left unproven here.

\section{Preliminaries}

\subsection{The cluster method for words}

We first introduce the Goulden\textendash Jackson cluster method
for words, which we will use to prove our lifting of the cluster method
for permutations to the Malvenuto\textendash Reutenauer algebra. The
exposition in this section follows that in \cite{Zhuang2018}.

For a finite or countably infinite set $A$, let $A^{*}$ be the set
of all finite sequences of elements of $A$, including the empty sequence.
We call $A$ an \textit{alphabet}, the elements of $A$ \textit{letters},
and the elements of $A^{*}$ \textit{words}. The \textit{length} $\left|w\right|$
of a word $w\in A^{*}$ is the number of letters in $w$. For $v,w\in A^{*}$,
we say that $v$ is a \textit{subword} of $w$ if $w=uvu^{\prime}$
for some $u,u^{\prime}\in A^{*}$, and in this case we also say $w$
\textit{contains} $v$ and that $v$ is an \textit{occurrence} of
$w$. The \textit{total algebra} of $A^{*}$ over $\mathbb{Q},$ denoted
$\mathbb{Q}\langle\langle A^{*}\rangle\rangle$, is the $\mathbb{Q}$-algebra
of formal sums of words in $A^{*}$ where multiplication is the concatenation
product.

Given a word $w=w_{1}w_{2}\cdots w_{n}\in A^{*}$ and a set $B\subseteq A^{*}$,
we say that $(i,v)$ is a \textit{marked occurrence} of $v\in B$
in $w$ if 
\[
v=w_{i}w_{i+1}\cdots w_{i+\left|v\right|-1},
\]
that is, $v$ is a subword of $w$ starting at position $i$. Moreover,
we say that $(w,T)$ is a \textit{marked word} on $w$ (with respect
to $B$) if $w\in A^{*}$ and $T$ is a set of some marked occurrences
in $w$ of words in $B$.

To illustrate, suppose that $A=\{a,b,c\}$ and $B=\{cab,bc\}$. Then
\begin{equation}
(cabcabbca,\{(1,cab),(3,bc),(7,bc)\}),\label{e-mkwrd}
\end{equation}
is a marked word on $w=cabcabbca$ with respect to $B$. Informally,
we will display a marked word $(w,T)$ as the word $w$ with the marked
occurrences in $T$ circled, so that (\ref{e-mkwrd}) is displayed
as

\begin{center}
\begin{tikzpicture}

\node at (0,0) {$c\;a\;b\;c\;a\;b\;b\;c\;a\;.$};

\draw[red] (-1.04,0) ellipse (13bp and 9bp);
\draw[red] (-0.58,0) ellipse (9bp and 9bp);
\draw[red] (0.64,0) ellipse (9bp and 9bp);

\end{tikzpicture}
\end{center}

We define the concatenation of two marked words in the obvious way.
For example, (\ref{e-mkwrd}) can be obtained by concatenating $(cabca,\{(1,cab),(3,bc)\})$
and $(bbca,\{(2,bc)\})$, i.e., \begin{center}
\begin{tikzpicture}

\node at (0,0) {$c\;a\;b\;c\;a\quad$ and $\quad b\;b\;c\;a\;.$};

\draw[red] (-1.85,0) ellipse (13bp and 9bp);
\draw[red] (-1.39,0) ellipse (9bp and 9bp);
\draw[red] (1.47,0) ellipse (9bp and 9bp);

\end{tikzpicture}
\end{center}

A marked word is called a \textit{cluster} if it is not a concatenation
of two nonempty marked words. (In particular, we will call a cluster
with respect to $B$ a $B$-\textit{cluster}.) So, (\ref{e-mkwrd})
is not a cluster, but\begin{center}
\begin{tikzpicture}

\node at (0,0) {$b\;c\;a\;b\;c\;a\;b$};

\draw[red] (-0.78,0) ellipse (9bp and 9bp);
\draw[red] (-0.32,0) ellipse (13bp and 9bp);
\draw[red] (0.15,0) ellipse (9bp and 9bp);
\draw[red] (0.63,0) ellipse (13bp and 9bp);

\end{tikzpicture}
\end{center}

\noindent is a cluster.

For a word $w\in A^{*}$, let $\occ_{B}(w)$ be the number of occurrences
in $w$ of words in $B$ and let $C_{B,w}$ be the set of all $B$-clusters
on $w$. If $c$ is a $B$-cluster, then we let $\mk_{B}(c)$ be the
number of marked occurrences in $c$. Define
\[
F_{B}(s)\coloneqq\sum_{w\in A^{*}}ws^{\occ_{B}(w)}\quad\text{and}\quad R_{B}(s)\coloneqq\sum_{w\in A^{*}}w\sum_{c\in C_{B,w}}s^{\mk_{B}(c)},
\]
so that $F_{B}(s)$ is the generating function for words in $A^{*}$
by the number of occurrences of words in $B$, and $R_{B}(s)$ is
the generating function for $B$-clusters by the number of marked
occurrences. Both $F_{B}(s)$ and $R_{B}(s)$ are elements of the
formal power series algebra $\mathbb{Q}\langle\langle A^{*}\rangle\rangle[[s]]$,
so the variable $s$ commutes with letters in $A$ (but the letters
in $A$ do not commute with each other).
\begin{thm}[Cluster method for words]
\label{t-gjcm} Let $A$ be an alphabet and let $B\subseteq A^{*}$
be a set of words, each of length at least 2. Then
\[
F_{B}(s)=\bigg(1-\sum_{a\in A}a-R_{B}(s-1)\bigg)^{-1}.
\]
\end{thm}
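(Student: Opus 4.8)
The plan is to prove the identity by setting up a weight-preserving decomposition of arbitrary words into alternating blocks of ``unmarked'' letters and clusters, then summing the resulting geometric series in the total algebra. First I would introduce, for a fixed word $w$ together with a set $T$ of marked occurrences of words in $B$, the notion of the \emph{marked word} $(w,T)$ and observe that every marked word factors uniquely as a concatenation of nonempty clusters and single unmarked letters: concretely, reading $w$ from left to right, one greedily groups together maximal runs of positions that are ``linked'' by overlapping marked occurrences in $T$ (these runs, together with the marks they carry, are precisely the clusters), while positions covered by no mark in $T$ appear as bare letters. The key point is that this factorization is a bijection between marked words on all of $A^*$ (with respect to $B$) and finite sequences whose terms are either a letter $a \in A$ or a nonempty $B$-cluster.

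Next I would translate this bijection into a generating-function identity in $\mathbb{Q}\langle\langle A^* \rangle\rangle[[s]]$. Define $G_B(s) \coloneqq \sum_{(w,T)} w\, s^{|T|}$, the sum over all marked words $(w,T)$ with respect to $B$, where $|T|$ is the number of marks. The factorization above, together with the fact that concatenation of marked words multiplies the underlying words and adds the mark counts, gives
\[
G_B(s) = \sum_{k=0}^{\infty} \bigg( \sum_{a \in A} a + R_B(s) \bigg)^{\!k} = \bigg( 1 - \sum_{a \in A} a - R_B(s) \bigg)^{\!-1},
\]
since a bare letter contributes $a$ with no mark and a nonempty cluster $c$ contributes (by definition of $R_B$) its underlying word weighted by $s^{\mk_B(c)}$; the geometric series converges in the $\mathfrak{m}$-adic topology on the total algebra because every nonempty factor has length at least $1$. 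The remaining step is to relate $G_B(s)$ to $F_B(s)$: for a fixed word $w$, summing $s^{|T|}$ over all subsets $T$ of the set of (all) marked occurrences in $w$ yields $\sum_{j=0}^{\occ_B(w)} \binom{\occ_B(w)}{j} s^j = (1+s)^{\occ_B(w)}$, so $G_B(s) = \sum_{w \in A^*} w\,(1+s)^{\occ_B(w)} = F_B(1+s)$. Substituting $s \mapsto s-1$ then gives $F_B(s) = G_B(s-1) = \big( 1 - \sum_{a\in A} a - R_B(s-1) \big)^{-1}$, as claimed.

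The main obstacle is making the unique-factorization claim fully rigorous: one must check that the greedy left-to-right grouping really does produce clusters (i.e., that a maximal linked run, with its inherited marks, is genuinely not further decomposable as a concatenation of two nonempty marked words) and that, conversely, concatenating clusters and letters never accidentally merges two clusters into one — the latter holds precisely because a marked occurrence lying entirely within one factor cannot ``see'' across a factor boundary, as marks are recorded by position. A secondary technical point is justifying the convergence of the geometric series and the interchange of summations in $\mathbb{Q}\langle\langle A^* \rangle\rangle[[s]]$; this is routine once one notes that, for each fixed word length, only finitely many marked words contribute, so all manipulations take place degree by degree. With these points settled, the identity follows by the chain of equalities above.
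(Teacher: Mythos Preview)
Your proposal is correct and is the standard proof of the Goulden--Jackson cluster method. The paper does not actually supply its own proof of this theorem; it simply remarks that the argument is essentially the same as the original Goulden--Jackson proof and refers the reader to \cite[Theorem 1]{Zhuang2018}. Your write-up reconstructs precisely that argument---unique factorization of marked words into bare letters and clusters, summing the geometric series to get $G_B(s)=(1-\sum_{a}a-R_B(s))^{-1}$, and then the binomial identity $G_B(s)=F_B(1+s)$ followed by the shift $s\mapsto s-1$---so there is nothing to compare.
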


This is a noncommutative version of the original cluster method of
Goulden and Jackson, but the proofs are essentially the same; see,
e.g., \cite[Theorem 1]{Zhuang2018} for details.

\subsection{The cluster method for permutations}

Next, we describe Elizalde and Noy's \cite{Elizalde2012} adaptation
of the cluster method for permutations, as well as its $q$-analogue
which refines by the inversion number. The terms \textit{marked occurrence},
\textit{marked permutation}, \textit{concatenation}, and \textit{cluster}
are defined for permutations in the analogous way as for words, but
with the notion of word containment replaced by permutation containment
(in the sense of consecutive patterns). It is worth pointing out that,
unlike concatenation of marked words, concatenation of marked permutations
is not unique. For instance, both\begin{center}
\begin{tikzpicture}

\node at (0,0) {$3\;2\;1\;4\;6\;7\;8\;5\;9\quad$ and $\quad 7\;5\;1\;8\;3\;4\;6\;2\;9$};

\draw[red] (-2.94,0) ellipse (14bp and 11bp);
\draw[red] (-1.94,0) ellipse (14bp and 11bp);
\draw[red] (-1.29,0) ellipse (14bp and 11bp);

\draw[red] (1.63,0) ellipse (14bp and 11bp);
\draw[red] (2.62,0) ellipse (14bp and 11bp);
\draw[red] (3.26,0) ellipse (14bp and 11bp);

\end{tikzpicture}
\end{center}are concatenations of \begin{center}
\begin{tikzpicture}

\node at (0,0) {$3\;2\;1\;4\quad$ and $\quad 2\;3\;4\;1\;5\;.$};

\draw[red] (-1.56,0) ellipse (14bp and 11bp);
\draw[red] (1.03,0) ellipse (14bp and 11bp);
\draw[red] (1.7,0) ellipse (14bp and 11bp);

\end{tikzpicture}
\end{center}However, this does not make a difference in defining clusters for
permutations or in adapting the cluster method to the setting of permutations.

Let $\Gamma\subseteq\mathfrak{S}$. Recall that $\occ_{\Gamma}(\pi)$
is the number of occurrences in $\pi$ of patterns in $\Gamma$, and
let $C_{\Gamma,\pi}$ be the set of all $\Gamma$-clusters on $\pi$.
If $c$ is a $\Gamma$-cluster, let $\mk_{\Gamma}(c)$ be the number
of marked occurrences in $c$. Define
\begin{align*}
F_{\Gamma}(s,x) & \coloneqq\sum_{n=0}^{\infty}\sum_{\pi\in\mathfrak{S}_{n}}s^{\occ_{\Gamma}(\pi)}\frac{x^{n}}{n!}\quad\text{and}\\
R_{\Gamma}(s,x) & \coloneqq\sum_{n=0}^{\infty}\sum_{\pi\in\mathfrak{S}_{n}}\sum_{c\in C_{\Gamma,\pi}}s^{\mk_{\Gamma}(c)}\frac{x^{n}}{n!}=\sum_{n=0}^{\infty}\sum_{k=0}^{\infty}r_{\Gamma,n,k}s^{k}\frac{x^{n}}{n!}
\end{align*}
where $r_{\Gamma,n,k}$ is the number of $\Gamma$-clusters of length
$n$ with $k$ marked occurrences.
\begin{thm}[Cluster method for permutations]
\label{t-gjcmperm}Let $\Gamma\subseteq\mathfrak{S}$ be a set of
permutations, each of length at least 2. Then
\[
F_{\Gamma}(s,x)=(1-x-R_{\Gamma}(s-1,x))^{-1}.
\]
\end{thm}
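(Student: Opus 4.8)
The plan is to derive Theorem~\ref{t-gjcmperm} from the cluster method for words (Theorem~\ref{t-gjcm}) by encoding permutations as words with distinct letters. Fix $\Gamma\subseteq\mathfrak{S}$ with every element of length at least $2$. For each positive integer $N$, take the alphabet $A=[N]$ and let $B_{N}\subseteq[N]^{*}$ be the set of words with pairwise distinct letters whose standardization lies in $\Gamma$. Since occurrences of consecutive patterns, and the overlap structure among them, depend only on the relative order of the entries, for a distinct-letter word $w$ one has $\occ_{B_{N}}(w)=\occ_{\Gamma}(\std w)$, and $(w,T)$ is a $B_{N}$-cluster if and only if the corresponding marked permutation $(\std w,T)$ is a $\Gamma$-cluster, with the same number of marks. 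Theorem~\ref{t-gjcm} then gives the identity $F_{B_{N}}(s)=\bigl(1-\sum_{i=1}^{N}i-R_{B_{N}}(s-1)\bigr)^{-1}$ in $\mathbb{Q}\langle\langle[N]^{*}\rangle\rangle[[s]]$.

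The crucial step is to pass to the abelianization. Sending a word $w_{1}\cdots w_{n}$ to the commutative monomial $y_{w_{1}}\cdots y_{w_{n}}$ extends to a $\mathbb{Q}[[s]]$-algebra homomorphism $\Phi\colon\mathbb{Q}\langle\langle[N]^{*}\rangle\rangle[[s]]\to\mathbb{Q}[[y_{1},\dots,y_{N}]][[s]]$ (well-defined since only finitely many words have a prescribed letter-content), which turns concatenation into multiplication of monomials. Apply $\Phi$ to the displayed identity and extract the coefficient of the squarefree monomial $y_{1}y_{2}\cdots y_{N}$: this isolates exactly the distinct-letter contributions. On the left, that coefficient is $\sum_{\pi\in\mathfrak{S}_{N}}s^{\occ_{\Gamma}(\pi)}$, because the words on $[N]$ with underlying set all of $[N]$ are precisely the one-line notations of permutations in $\mathfrak{S}_{N}$. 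On the right, expand $\bigl(1-\sum_{i}y_{i}-\Phi(R_{B_{N}}(s-1))\bigr)^{-1}=\sum_{k\ge0}\bigl(\sum_{i}y_{i}+\Phi(R_{B_{N}}(s-1))\bigr)^{k}$ and note that the coefficient of a squarefree monomial $y_{S}$ in $\Phi(R_{B_{N}}(s-1))$ depends only on $|S|$ and equals $r_{\Gamma,|S|}(s-1)\coloneqq\sum_{k}r_{\Gamma,|S|,k}(s-1)^{k}$, which vanishes for $|S|\le1$ since $\Gamma$-clusters have length at least $2$. Hence the squarefree coefficient on the right equals $\sum_{k\ge0}\sum_{(S_{1},\dots,S_{k})}\prod_{i=1}^{k}b_{|S_{i}|}(s-1)$, the inner sum over ordered set partitions $(S_{1},\dots,S_{k})$ of $[N]$, where $b_{1}=1$ and $b_{m}(s)=r_{\Gamma,m}(s)$ for $m\ge2$.

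Since this summand depends only on the block sizes $m_{i}=|S_{i}|$, and exactly $N!/(m_{1}!\cdots m_{k}!)$ ordered set partitions of $[N]$ have a given size profile, dividing the whole identity by $N!$ recasts it in terms of exponential generating functions: one obtains $\tfrac{1}{N!}\sum_{\pi\in\mathfrak{S}_{N}}s^{\occ_{\Gamma}(\pi)}=[x^{N}]\sum_{k\ge0}\bigl(\sum_{m\ge1}\tfrac{b_{m}(s-1)}{m!}x^{m}\bigr)^{k}=[x^{N}]\bigl(1-x-R_{\Gamma}(s-1,x)\bigr)^{-1}$, using $\sum_{m\ge1}\tfrac{b_{m}(s-1)}{m!}x^{m}=x+R_{\Gamma}(s-1,x)$ (the lone $x$ from singleton blocks, the rest from $\Gamma$-clusters, which have length at least $2$). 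Checking the trivial case $N=0$ and summing over $N\ge0$ yields $F_{\Gamma}(s,x)=(1-x-R_{\Gamma}(s-1,x))^{-1}$. The main obstacle is conceptual, not computational: the span of distinct-letter words is not a subalgebra of $\mathbb{Q}\langle\langle[N]^{*}\rangle\rangle$ (concatenating distinct-letter words generally creates repeats, and $B_{N}$-clusters need not be distinct-letter), so one cannot simply ``restrict the word identity to permutations''. Passing through $\Phi$ and the squarefree-monomial functional is what makes that restriction compatible with the algebra operations, and the multinomial coefficients it produces are precisely what manufacture the factorials in the exponential generating functions. A couple of conventions and edge cases — that clusters carry at least one marked occurrence (so there are none of length $\le1$) and the cases $N=0,1$ — need to be pinned down but are routine.
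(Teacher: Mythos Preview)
Your proof is correct, but it takes a different route from the paper. The paper first lifts the word cluster method to the Malvenuto--Reutenauer algebra $\mathbf{FQSym}$ (Theorem~\ref{t-gjcmfqsym}): working over the infinite alphabet $\{X_1,X_2,\dots\}$, it groups monomials by standardization into the basis elements $\mathbf{G}_\pi$, shows via Lemmas~\ref{l-stdsubword}--\ref{l-mkstd} that $\bar F_\Gamma(s)=F_B(s)$ and $\bar R_\Gamma(s)=R_B(s)$ for the appropriate $B$, and then applies Theorem~\ref{t-gjcm}. Theorem~\ref{t-gjcmperm} is then obtained simply by applying the algebra homomorphism $\Psi(\mathbf{G}_\pi)=x^n/n!$; the fact that $\Psi$ is multiplicative is what produces the EGF factorials. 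Your approach instead works one degree $N$ at a time on the finite alphabet $[N]$, abelianizes, and extracts the coefficient of the full squarefree monomial $y_1\cdots y_N$; the multinomial counts of ordered set partitions then generate the factorials directly. The underlying combinatorics is the same---both methods are ultimately counting distinct-letter words standardizing to each permutation---but the transfer mechanism differs. Your argument is more elementary and self-contained (no $\mathbf{FQSym}$ needed), while the paper's route yields the stronger intermediate result in $\mathbf{FQSym}$, which is the point of the paper since it can then be pushed through other homomorphisms to track $\inv$, $\ides$, $\ipk$, etc.
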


Elizalde and Noy give Theorem \ref{t-gjcmperm} in the special case
where $\Gamma$ consists of a single pattern \cite[Theorem 1.1]{Elizalde2012},
but in Section 3 we will recover this more general result from our
cluster method in the Malvenuto\textendash Reutenauer algebra.

The $n$th $q$-\textit{factorial} $[n]_{q}!$ is defined by 
\[
[n]_{q}!\coloneqq(1+q)(1+q+q^{2})\cdots(1+q+\cdots+q^{n-1})
\]
for $n\geq1$ and $[0]_{q}!\coloneqq1$. Later, we will also need
the $q$-\textit{binomial coefficient} defined by 
\[
{\binom{n}{k}}_{\!\!q}\coloneqq\frac{[n]_{q}!}{[k]_{q}!\,[n-k]_{q}!}
\]
for all $n\geq0$ and $0\leq k\leq n$. 

We say that $(i,j)\in[n]^{2}$ is an \textit{inversion} of $\pi\in\mathfrak{S}_{n}$
if $i<j$ and $\pi_{i}>\pi_{j}$. Let $\inv(\pi)$ denote the number
of inversions of $\pi$. Define
\begin{alignat*}{1}
F_{\Gamma}(s,q,x) & \coloneqq\sum_{n=0}^{\infty}\sum_{\pi\in\mathfrak{S}_{n}}s^{\occ_{\Gamma}(\pi)}q^{\inv(\pi)}\frac{x^{n}}{[n]_{q}!}\quad\text{and}\\
R_{\Gamma}(s,q,x) & \coloneqq\sum_{n=0}^{\infty}\sum_{\pi\in\mathfrak{S}_{n}}q^{\inv(\pi)}\sum_{c\in C_{\Gamma,\pi}}s^{\mk_{\Gamma}(c)}\frac{x^{n}}{[n]_{q}!}=\sum_{n=0}^{\infty}\sum_{k=0}^{\infty}\sum_{j=0}^{\infty}r_{\Gamma,n,k,j}q^{j}s^{k}\frac{x^{n}}{[n]_{q}!}
\end{alignat*}
where $r_{\Gamma,n,k,j}$ is the number of $\Gamma$-clusters of length
$n$ with $k$ marked occurrences and whose underlying permutation
has $j$ inversions. The next result is \cite[Theorem 2.3]{Crane2018},
but for a set $\Gamma$ of patterns rather than a single pattern $\sigma$.
\begin{thm}[$q$-Cluster method for permutations]
\label{t-qgcjmperm}Let $\Gamma\subseteq\mathfrak{S}$ be a set of
permutations, each of length at least 2. Then
\[
F_{\Gamma}(s,q,x)=(1-x-R_{\Gamma}(s-1,q,x))^{-1}.
\]
\end{thm}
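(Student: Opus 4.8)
The plan is to run the same sieve (inclusion--exclusion) argument that establishes the ordinary permutation cluster method (Theorem \ref{t-gjcmperm}), but carrying along the inversion weight $q^{\inv(\pi)}$ and replacing the exponential normalization $x^{n}/n!$ by the $q$-exponential normalization $x^{n}/[n]_{q}!$. One could instead deduce this theorem from the cluster method in $\mathbf{FQSym}$ of Section 3 by applying a suitable homomorphism, or simply note that the argument of \cite[Theorem 2.3]{Crane2018} for a single pattern goes through verbatim for a set $\Gamma$; but the direct argument below is self-contained. I would start from the identity $\sum_{T}(s-1)^{|T|}=s^{\occ_{\Gamma}(\pi)}$, where $T$ ranges over all subsets of the set of occurrences of patterns of $\Gamma$ in $\pi$, so that
\[
F_{\Gamma}(s,q,x)=\sum_{n\geq0}\ \sum_{(\pi,T)}(s-1)^{|T|}\,q^{\inv(\pi)}\,\frac{x^{n}}{[n]_{q}!},
\]
the inner sum being over marked permutations $(\pi,T)$ of length $n$ with respect to $\Gamma$.

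The key structural step is the canonical decomposition of a marked permutation. Given $(\pi,T)$, I would partition the positions $[n]$ into maximal blocks linked by chains of overlapping marked occurrences of $T$; a block carrying at least one mark is, after standardization, a $\Gamma$-cluster, and each remaining position is a lone unmarked letter. Since the patterns in $\Gamma$ have length at least $2$, one checks that in a genuine cluster every position is covered by some mark---an uncovered position would exhibit the marked permutation as a concatenation of two shorter nonempty marked permutations, contradicting the defining property of a cluster. This gives a bijection between marked permutations of length $n$ and pairs consisting of (i) an ordered sequence of ``atoms'' $A_{1},\dots,A_{j}$, each atom being either a single letter or a $\Gamma$-cluster, with lengths $m_{1},\dots,m_{j}$ summing to $n$, and (ii) an ordered set partition of $[n]$ into blocks of sizes $m_{1},\dots,m_{j}$ recording which values occupy each atom.

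It then remains to check that the weights factor over the atoms. Inversions split as $\inv(\pi)=\sum_{i}\inv(A_{i})+(\text{inversions between distinct atoms})$, and since the atoms are ordered left to right by position, the cross term equals $\sum_{i<i'}\#\{(v,v')\in S_{i}\times S_{i'}:v>v'\}$, which depends only on the ordered set partition $(S_{1},\dots,S_{j})$ of $[n]$ in (ii). Summing $q^{\mathrm{cross}}$ over all such ordered set partitions with block sizes $m_{1},\dots,m_{j}$ yields the $q$-multinomial coefficient $[n]_{q}!/([m_{1}]_{q}!\cdots[m_{j}]_{q}!)$, so that $x^{n}/[n]_{q}!$ redistributes as $\prod_{i}x^{m_{i}}/[m_{i}]_{q}!$, one factor per atom. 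A lone letter contributes $q^{0}x/[1]_{q}!=x$, and a $\Gamma$-cluster $c$ contributes $(s-1)^{\mk_{\Gamma}(c)}q^{\inv(c)}x^{|c|}/[|c|]_{q}!$; summing the latter over all clusters gives exactly $R_{\Gamma}(s-1,q,x)$. Hence the single-atom series is $x+R_{\Gamma}(s-1,q,x)$, and as a marked permutation is an ordered sequence of atoms,
\[
F_{\Gamma}(s,q,x)=\sum_{j\geq0}\bigl(x+R_{\Gamma}(s-1,q,x)\bigr)^{j}=\bigl(1-x-R_{\Gamma}(s-1,q,x)\bigr)^{-1}.
\]

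The main obstacle is the $q$-refinement of the bookkeeping in the previous paragraph: verifying that summing $q^{\mathrm{cross}}$ over the ways of distributing the values $[n]$ among the ordered atoms produces precisely the $q$-multinomial coefficient (equivalently, the factorization $[n]_{q}!=[m_{1}]_{q}!\cdots[m_{j}]_{q}!\cdot\binom{n}{m_{1},\dots,m_{j}}_{q}$ combined with the interpretation of $\binom{n}{m_{1},\dots,m_{j}}_{q}$ as an inversion generating function for shuffles), and confirming that this is exactly what replaces the exponential-formula product structure used in the $q=1$ case. Everything else---the decomposition into atoms, the behavior of pattern occurrences under it, and the fact that the non-uniqueness of concatenation of marked permutations is irrelevant because clusters are by definition not concatenations---is formally identical to the proof of Theorem \ref{t-gjcmperm}.
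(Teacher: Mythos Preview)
Your argument is correct, but it proceeds differently from the paper. The paper does not give a direct proof of Theorem~\ref{t-qgcjmperm}; instead it first establishes the cluster method in $\mathbf{FQSym}$ (Theorem~\ref{t-gjcmfqsym}) by reducing to the word cluster method, and then applies the homomorphism $\Psi_{q}\colon\mathbf{G}_{\pi}\mapsto q^{\inv(\pi)}x^{n}/[n]_{q}!$. The multiplicativity of $\Psi_{q}$ is checked via the identity $\sum_{\tau\in C(\pi,\sigma)}q^{\inv(\tau)}=q^{\inv(\pi)+\inv(\sigma)}\binom{m+n}{n}_{q}$, which is exactly the two-atom case of your $q$-multinomial bookkeeping. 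So the core combinatorial input is the same; the difference is that the paper packages the atom decomposition and the value-distribution step into the algebraic statement ``$\Psi_{q}$ is a homomorphism on $\mathbf{FQSym}$,'' whereas you unroll that statement by hand. Your route is more self-contained and avoids the $\mathbf{FQSym}$ machinery entirely; the paper's route is uniform---once Theorem~\ref{t-gjcmfqsym} is in place, Theorems~\ref{t-gjcmperm} and~\ref{t-qgcjmperm} (and all the later refinements) fall out by swapping in different homomorphisms, with no need to repeat the sieve or the atom decomposition.
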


See \cite[Corollary 1]{Rawlings2007} for a related result. Like with
Theorem \ref{t-gjcmperm}, we will later recover Theorem \ref{t-qgcjmperm}
as a specialization of our generalized cluster method.

Let us give one more definition before continuing. Given $\sigma\in\mathfrak{S}_{m}$,
let
\[
O_{\sigma}\coloneqq\{\,i\in[m-1]:\std(\sigma_{i+1}\sigma_{i+2}\cdots\sigma_{m})=\std(\sigma_{1}\sigma_{2}\cdots\sigma_{m-i})\,\}
\]
be the \textit{overlap set} of $\sigma$. The notion of overlap set
is useful for characterizing $\Gamma$-clusters where $\Gamma$ consists
of a single pattern $\sigma$, and we will do this in Sections 4.1
and 5.1.

\subsection{Quasisymmetric functions and shuffle-compatibility}

A permutation in $\mathfrak{S}_{n}$ can be characterized as a word
in $[n]^{*}$ of length $n$ consisting of distinct letters. Let $\mathbb{P}$
be the set of positive integers, and let $\mathfrak{P}_{n}$ denote
the set of words in $\mathbb{P}^{*}$ of length $n$ consisting of
distinct letters\textemdash not necessarily from 1 to $n$. Also,
let $\mathfrak{P}\coloneqq\bigsqcup_{n=0}^{\infty}\mathfrak{P}_{n}$.
In this section only, we will use the term ``permutation'' to refer
more generally to elements of $\mathfrak{P}$. Observe that any statistic
$\st$ defined on permutations in $\mathfrak{S}$ can be extended
to $\mathfrak{P}$ by letting $\st(\pi)\coloneqq\st(\std(\pi))$ for
$\pi\in\mathfrak{P}$.

Every permutation in $\mathfrak{P}$ can be uniquely decomposed into
a sequence of maximal increasing consecutive subsequences, which we
call \textit{increasing runs}. Equivalently, an increasing run of
$\pi$ is a maximal consecutive subsequence containing no descents.
The \textit{descent composition} of $\pi$, denoted $\Comp(\pi)$,
is the composition whose parts are the lengths of the increasing runs
of $\pi$ in the order that they appear. For instance, the increasing
runs of $\pi=85712643$ are $8$, $57$, $126$, $4$, and $3$, so
the descent composition of $\pi$ is $\Comp(\pi)=(1,2,3,1,1)$. We
use the notations $L\vDash n$ and $\left|L\right|=n$ to indicate
that $L$ is a composition of $n$, so that $L\vDash n$ and $\left|L\right|=n$
whenever $L$ is the descent composition of a permutation in $\mathfrak{P}_{n}$.
For a composition $L=(L_{1},L_{2},\dots,L_{k})$, let $\Des(L)\coloneqq\{L_{1},L_{1}+L_{2},\dots,L_{1}+\cdots+L_{k-1}\}$.
It is easy to see that if $L$ is the descent composition of $\pi$,
then $\Des(L)$ is the descent set of $\pi$.

If $\pi\in\mathfrak{P}_{m}$ and $\sigma\in\mathfrak{P}_{n}$ are
\textit{disjoint}\textemdash that is, if they have no letters in common\textemdash then
we call $\tau\in\mathfrak{P}_{m+n}$ a \textit{shuffle} of $\pi$
and $\sigma$ if both $\pi$ and $\sigma$ are subsequences of $\tau$.
The set of shuffles of $\pi$ and $\sigma$ is denoted $S(\pi,\sigma)$.
For example, we have 
\[
S(31,25)=\{3125,3215,3251,2315,2351,2531\}.
\]

Let $x_{1},x_{2},\dots$ be commuting variables. A formal power series
$f\in\mathbb{Q}[[x_{1},x_{2},\dots]]$ of bounded degree is called
a \textit{quasisymmetric function} if for any positive integers $a_{1},a_{2},\dots,a_{k}$,
if $i_{1}<i_{2}<\cdots<i_{k}$ and $j_{1}<j_{2}<\cdots<j_{k}$ then
\[
[x_{i_{1}}^{a_{1}}x_{i_{2}}^{a_{2}}\cdots x_{i_{k}}^{a_{k}}]\,f=[x_{j_{1}}^{a_{1}}x_{j_{2}}^{a_{2}}\cdots x_{j_{k}}^{a_{k}}]\,f.
\]
Let $\mathrm{QSym}_{n}$ denote the set of quasisymmetric functions
homogeneous of degree $n$. As a vector space, $\mathrm{QSym}_{n}$
has as a basis the \textit{fundamental quasisymmetric functions} $\{F_{L}\}_{L\vDash n}$
defined by
\[
F_{L}\coloneqq\sum_{\substack{i_{1}\leq i_{2}\leq\cdots\leq i_{n}\\
i_{j}<i_{j+1}\text{ if }j\in\Des(L)
}
}x_{i_{1}}x_{i_{2}}\cdots x_{i_{n}}.
\]
If $L\vDash m$ and $K\vDash n$, then 
\begin{equation}
F_{L}F_{K}=\sum_{\tau\in S(\pi,\sigma)}F_{\Comp(\tau)}\label{e-fundprod}
\end{equation}
where $\pi$ and $\sigma$ are any disjoint permutations satisfying
$\Comp(\pi)=L$ and $\Comp(\sigma)=K$. Hence, 
\[
\mathrm{QSym}\coloneqq\bigoplus_{n=0}^{\infty}\mathrm{QSym}_{n}
\]
is a graded subalgebra of $\mathbb{Q}[[x_{1},x_{2},\dots]]$; this
is the \textit{algebra of quasisymmetric functions} (over $\mathbb{Q}$). 

Motivated by Stanley's theory of $P$-partitions, quasisymmetric functions
were first defined and studied by Gessel \cite{Gessel1984} and are
now ubiquitous in algebraic combinatorics. References on quasisymmetric
functions include \cite[Section 7.19]{Stanley2001}, \cite[Section 5]{Grinberg2020},
and \cite{Luoto2013}.

Let us now return to the product formula (\ref{e-fundprod}) for fundamental
quasisymmetric functions. In order for (\ref{e-fundprod}) to make
sense, the multiset $\{\,\Comp(\tau):\tau\in S(\pi,\sigma)\,\}$ must
only depend on the descent compositions of $\pi$ and $\sigma$, or
equivalently, $\{\,\Des(\tau):\tau\in S(\pi,\sigma)\,\}$ only depends
on $\Des(\pi)$, $\Des(\sigma)$, and the lengths of $\pi$ and $\sigma$.
More generally, a permutation statistic $\st$ is called \textit{shuffle-compatible}
if for any disjoint permutations $\pi$ and $\sigma$, the multiset
$\{\,\st(\tau):\tau\in S(\pi,\sigma)\,\}$ depends only on $\st(\pi)$,
$\st(\sigma)$, and the lengths of $\pi$ and $\sigma$. Therefore,
the descent set $\Des$ is a shuffle-compatible permutation statistic.

In \cite{Gessel2018}, Gessel and the present author develop a theory
of shuffle-compatibility for \textit{descent statistics}: statistics
$\st$ such that $\Comp(\pi)=\Comp(\sigma)$ implies $\st(\pi)=\st(\sigma)$.
The statistics $\des$, $\maj$, $\comaj$, $\pk$, and $\lpk$ are
all examples of shuffle-compatible descent statistics. If $\st$ is
a descent statistic and if $L$ is a composition, then we let $\st(L)$
denote the value of $\st$ on any permutation with descent composition
$L$. Two compositions $L$ and $K$ are called $\st$-\textit{equivalent}
if $\st(L)=\st(K)$ and $\left|L\right|=\left|K\right|$. The following
is Theorem 4.3 of \cite{Gessel2018}, and provides a necessary and
sufficient condition for a descent statistic to be shuffle-compatible.
\begin{thm}
\label{t-gzmain}A descent statistic $\st$ is shuffle-compatible
if and only if there exists a $\mathbb{Q}$-algebra homomorphism $\phi_{\st}\colon\mathrm{QSym}\rightarrow{\mathcal A}_{\st}$,
where ${\mathcal A}_{\st}$ is a $\mathbb{Q}$-algebra with basis $\{u_{\alpha}\}$
indexed by $\st$-equivalence classes $\alpha$ of compositions, such
that $\phi_{\st}(F_{L})=u_{\alpha}$ whenever $L$ is in the $\st$-equivalence
class $\alpha$.
\end{thm}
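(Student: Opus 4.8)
The plan is to prove both directions, with the forward direction (shuffle-compatibility $\Rightarrow$ existence of $\phi_{\st}$) being the substantive one. For the easy direction, suppose such an algebra $\mathcal{A}_{\st}$ and homomorphism $\phi_{\st}$ exist. Given disjoint permutations $\pi$ and $\sigma$ with $\Comp(\pi) = L$ and $\Comp(\sigma) = K$, apply $\phi_{\st}$ to the product formula (\ref{e-fundprod}): the left-hand side becomes $\phi_{\st}(F_L)\phi_{\st}(F_K) = u_\alpha u_\beta$, where $\alpha$ and $\beta$ are the $\st$-equivalence classes of $L$ and $K$, and this depends only on $\st(\pi)$, $\st(\sigma)$, and $|\pi|,|\sigma|$ (since the $\st$-equivalence class is determined by $\st$-value together with length). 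Meanwhile the right-hand side becomes $\sum_{\tau \in S(\pi,\sigma)} u_{[\Comp(\tau)]}$, where $[\,\cdot\,]$ denotes $\st$-equivalence class. Since the $u_\alpha$ form a basis, reading off coefficients recovers, for each value $v$, the number of $\tau \in S(\pi,\sigma)$ with $\st(\tau) = v$; hence the multiset $\{\st(\tau) : \tau \in S(\pi,\sigma)\}$ is determined by $\st(\pi), \st(\sigma), |\pi|, |\sigma|$, which is shuffle-compatibility.

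For the forward direction, the natural approach is to build $\mathcal{A}_{\st}$ as an explicit quotient of $\mathrm{QSym}$. Define a $\mathbb{Q}$-linear map $\phi_{\st}\colon \mathrm{QSym} \to \mathcal{A}_{\st}$ on the fundamental basis by $\phi_{\st}(F_L) \coloneqq u_{[L]}$, where $\mathcal{A}_{\st}$ is the vector space with basis $\{u_\alpha\}$ indexed by $\st$-equivalence classes and $[L]$ is the class of $L$. This $\phi_{\st}$ is surjective by construction, so the real content is to equip $\mathcal{A}_{\st}$ with a multiplication making $\phi_{\st}$ an algebra homomorphism — equivalently, to show that $\ker \phi_{\st}$ (spanned by differences $F_L - F_K$ for $\st$-equivalent $L,K$) is an ideal of $\mathrm{QSym}$. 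Concretely, one must check that if $L$ and $K$ are $\st$-equivalent, then for every composition $M$ the product $(F_L - F_K)F_M$ again lies in $\ker\phi_{\st}$, i.e., is a $\mathbb{Q}$-linear combination of differences of $F$'s indexed by $\st$-equivalent compositions. Expanding via (\ref{e-fundprod}), $F_L F_M = \sum_{\tau \in S(\pi,\mu)} F_{\Comp(\tau)}$ and $F_K F_M = \sum_{\tau' \in S(\rho,\mu)} F_{\Comp(\tau')}$ where $\pi,\rho,\mu$ are disjoint permutations with the prescribed descent compositions; shuffle-compatibility says precisely that the multisets $\{\st(\tau)\}$ and $\{\st(\tau')\}$ agree (they have equal lengths $|L| + |M| = |K| + |M|$ and equal $\st$-values on the factors), so grouping the terms on each side by $\st$-equivalence class of $\Comp(\tau)$ shows $F_L F_M$ and $F_K F_M$ have, class-by-class, the same number of summands. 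Hence their difference is a $\mathbb{Q}$-linear combination of differences $F_{M'} - F_{M''}$ with $M',M''$ $\st$-equivalent, and thus lies in $\ker\phi_{\st}$. Multiplication by $F_M$ on the left is handled symmetrically (or by commutativity of $\mathrm{QSym}$), and since these products span all of $\mathrm{QSym}\cdot(F_L - F_K)$, the kernel is an ideal. The quotient algebra structure then transfers to $\mathcal{A}_{\st}$, making $\phi_{\st}$ a $\mathbb{Q}$-algebra homomorphism with $\phi_{\st}(F_L) = u_{[L]}$ as required.

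The main obstacle — really the only nontrivial point — is the bookkeeping in showing $\ker\phi_{\st}$ is an ideal: one must be careful that shuffle-compatibility, which a priori is a statement about the multiset $\{\st(\tau) : \tau \in S(\pi,\sigma)\}$, translates correctly into a statement about multisets of $\st$-equivalence \emph{classes} of descent compositions $\Comp(\tau)$. This works because two compositions arising as $\Comp(\tau)$ and $\Comp(\tau')$ for $\tau,\tau'$ shuffles of (respectively) $\pi,\mu$ and $\rho,\mu$ automatically have the same total length, so equality of $\st$-values is exactly $\st$-equivalence. One should also note at the outset that $\st$ being a descent statistic is what makes $\st(L)$, and hence the notion of $\st$-equivalence class and the map $\phi_{\st}$ on the fundamental basis, well-defined in the first place.
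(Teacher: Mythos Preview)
The paper does not prove this theorem at all: it is quoted verbatim as Theorem~4.3 of \cite{Gessel2018} and used as a black box. So there is no ``paper's own proof'' to compare against.

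That said, your argument is correct and is essentially the standard proof (and, to my knowledge, the one in \cite{Gessel2018}). The backward direction is exactly right: applying $\phi_{\st}$ to the product rule~(\ref{e-fundprod}) and reading off coefficients in the basis $\{u_\alpha\}$ recovers the multiset of $\st$-values over the shuffle set from data depending only on $\st(\pi),\st(\sigma),|\pi|,|\sigma|$. For the forward direction, your plan of defining $\phi_{\st}$ linearly and then checking that $\ker\phi_{\st}$ is an ideal is the natural one, and the verification you sketch---that for $\st$-equivalent $L,K$ the products $F_LF_M$ and $F_KF_M$ have, $\st$-class by $\st$-class, the same number of fundamental summands---is exactly the content of shuffle-compatibility. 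Your closing remarks about why $\st$-equivalence (rather than just equality of $\st$-values) is the right notion, and why $\st$ being a descent statistic is needed to make $\phi_{\st}$ well-defined on the $F_L$, are both on point.
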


Gessel and the present author call ${\mathcal A}_{\st}$ the \textit{shuffle
algebra} of $\st$, because the basis elements $u_{\alpha}$ can be
viewed as encoding the distribution of $\st$ over shuffles of permutations.
Theorem \ref{t-gzmain} implies that ${\mathcal A}_{\st}$ is isomorphic
to a quotient of $\mathrm{QSym}$ whenever $\st$ is a shuffle-compatible
descent statistic. We will not be working with the algebras ${\mathcal A}_{\st}$
themselves, but rather with the homomorphisms $\phi_{\st}$. Note
that in the special case of the descent set, $\phi_{\Des}$ is an
isomorphism and the basis $\{u_{\alpha}\}$ of ${\mathcal A}_{\Des}$
corresponds directly to the fundamental basis of $\mathrm{QSym}$.

\subsection{The Malvenuto\textendash Reutenauer algebra}

Let $\mathbb{Q}[\mathfrak{S}]$ denote the $\mathbb{Q}$-vector space
with basis elements the permutations in $\mathfrak{S}$. The \textit{Malvenuto\textendash Reutenauer
algebra}, first defined in \cite{Malvenuto1995}, is the $\mathbb{Q}$-algebra
on $\mathbb{Q}[\mathfrak{S}]$ with the product 
\[
\pi\cdot\sigma=\sum_{\tau\in C(\pi,\sigma)}\tau
\]
where $C(\pi,\sigma)$ is the set of \textit{shifted concatenations}
of $\pi$ and $\sigma$. That is, if $\pi\in\mathfrak{S}_{m}$ and
$\sigma\in\mathfrak{S}_{n}$ then
\[
C(\pi,\sigma)\coloneqq\{\,\tau\in\mathfrak{S}_{m+n}:\std(\tau_{1}\cdots\tau_{m})=\pi\text{ and }\text{\ensuremath{\std}(\ensuremath{\tau_{m+1}\cdots\tau_{m+n}}})=\sigma\,\}.
\]
Note that the Malvenuto\textendash Reutenauer algebra is graded by
the length of the permutation, and that its identity element is the
empty permutation.

Rather than using the original construction of the Malvenuto\textendash Reutenauer
algebra as given above, we will follow the approach of Duchamp, Hivert,
and Thibon \cite{Duchamp2002}, who gave another realization of the
Malvenuto\textendash Reutenauer algebra as a subalgebra of $\mathbb{Q}\langle\langle A^{*}\rangle\rangle$
where $A$ consists of the noncommuting variables $X_{1},X_{2},\dots$.
In order to describe their construction, we must revisit the standardization
map $\std$. We extend the map $\std$ to all words on the alphabet
$\mathbb{P}$ of positive integers using the following rule: if a
letter repeats, then they are viewed as increasing from left to right.
For example, $\std(145411)=146523$. We will later use the following
fact, which is Proposition 5.3.2 of \cite{Grinberg2020}.
\begin{prop}
\label{p-std} Let $w=w_{1}w_{2}\cdots w_{n}$ be a word in $\mathbb{P}^{*}$
of length $n$, and let $\tau=\tau_{1}\tau_{2}\cdots\tau_{n}=\std(w)$.
Then $\tau$ is the unique permutation in $\mathfrak{S}_{n}$ such
that, whenever $1\leq i<j\leq n$, we have $\tau_{i}<\tau_{j}$ if
and only if $w_{i}\leq w_{j}$.
\end{prop}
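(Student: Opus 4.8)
The plan is to prove existence and uniqueness separately, with the existence part amounting to little more than unpacking the definition of $\std$ on words with repeated letters. Recall that this extended standardization is defined so that equal letters are treated as increasing from left to right; concretely, I would first record the reformulation that $\std(w)$ is obtained by forming the total order on positions in which position $i$ precedes position $j$ whenever either $w_i < w_j$, or $w_i = w_j$ and $i < j$, and then assigning to each position its rank in this order. With this as the working description of $\std$, the permutation $\tau = \std(w)$ lies in $\mathfrak{S}_n$ automatically, since the ranks of $n$ objects in a total order are exactly $1,2,\dots,n$.

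For existence, I would take $\tau = \std(w)$ and check the biconditional directly, by cases on the comparison of $w_i$ and $w_j$ for a fixed pair $i<j$. If $w_i < w_j$, then position $i$ precedes position $j$, so $\tau_i < \tau_j$, and indeed $w_i \le w_j$; if $w_i = w_j$, then because $i<j$ position $i$ still precedes position $j$, so $\tau_i < \tau_j$, and again $w_i \le w_j$; and if $w_i > w_j$, then position $j$ precedes position $i$, so $\tau_i > \tau_j$, while $w_i \le w_j$ fails. In every case $\tau_i < \tau_j$ holds precisely when $w_i \le w_j$, which is the asserted property.

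For uniqueness, suppose $\rho \in \mathfrak{S}_n$ also satisfies the property. I would first promote the hypothesis from pairs $i<j$ to all ordered pairs: for $i>j$, applying the property (to the pair $(j,i)$) to both $\rho$ and $\tau$ gives $\rho_j < \rho_i \iff w_j \le w_i \iff \tau_j < \tau_i$, so in every case $\rho_i < \rho_j \iff \tau_i < \tau_j$. Now define $f\colon [n] \to [n]$ by $f(\tau_i) = \rho_i$; this is well defined and bijective because $\tau$ and $\rho$ are bijections, and it is order-preserving by the biconditional just established. Since an order-preserving bijection of the chain $[n]$ is the identity, we get $\rho_i = f(\tau_i) = \tau_i$ for all $i$, i.e., $\rho = \tau$.

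There is no deep obstacle here; the only points requiring care are the bookkeeping around the tie-breaking convention in the definition of $\std$ on words with repeats, and — in the uniqueness step — remembering to extend the given condition (stated only for $i<j$) to all ordered pairs before invoking the rigidity of order-preserving bijections of a finite chain. One could instead run an induction on $n$ by deleting the rightmost occurrence of the largest letter, but the direct two-part argument above is cleaner and sidesteps any case analysis about how such a deletion interacts with repeated letters.
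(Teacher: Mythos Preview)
Your argument is correct. The existence half is a clean unpacking of the tie-breaking convention for $\std$, and your uniqueness step---promoting the $i<j$ hypothesis to all ordered pairs and then observing that $\rho\circ\tau^{-1}$ is an order-preserving bijection of $[n]$, hence the identity---is sound.

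There is nothing to compare against, however: the paper does not prove this proposition. It simply records it as Proposition~5.3.2 of Grinberg--Reiner \cite{Grinberg2020} and moves on. Your write-up therefore supplies a self-contained proof where the paper offers only a citation; if anything, you have done more than was required. The argument in the cited reference proceeds along essentially the same lines as yours (defining the total order on positions via the lexicographic rule on $(w_i,i)$ and reading off ranks), so your approach is also the standard one.
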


For the remainder of this section, let $A=\{X_{1},X_{2},\dots\}$
where the $X_{i}$ are noncommuting variables. Given a monomial $\mathbf{X}=X_{i_{1}}X_{i_{2}}\cdots X_{i_{n}}$,
define $\std(\mathbf{X})\coloneqq\std(i_{1}i_{2}\cdots i_{n})$. Then
we associate to each permutation $\pi\in\mathfrak{S}$ an element
$\mathbf{G}_{\pi}\in\mathbb{Q}\langle\langle A^{*}\rangle\rangle$
defined by 
\[
\mathbf{G}_{\pi}\coloneqq\sum_{\substack{\mathbf{X}\in A^{*}\\
\std(\mathbf{X})=\pi
}
}\mathbf{X}.
\]
It can be shown that the $\mathbf{G}_{\pi}$ are linearly independent
and multiply by the rule 
\[
\mathbf{G}_{\pi}\mathbf{G}_{\sigma}=\sum_{\tau\in C(\pi,\sigma)}\mathbf{G}_{\tau},
\]
so $\{\mathbf{G}_{\pi}\}_{\pi\in\mathfrak{S}}$ spans a $\mathbb{Q}$-subalgebra
of $\mathbb{Q}\langle\langle A^{*}\rangle\rangle$, called the \textit{algebra
of free quasisymmetric functions} and denoted $\mathbf{FQSym}$. Since
$\pi\mapsto\mathbf{G}_{\pi}$ is clearly a $\mathbb{Q}$-algebra isomorphism
between $\mathbb{Q}[\mathfrak{S}]$ and $\mathbf{FQSym}$, we will henceforth refer to $\mathbf{FQSym}$ as the Malvenuto\textendash Reutenauer
algebra. We use $\mathbf{FQSym}$ instead of $\mathbb{Q}[\mathfrak{S}]$
because, by identifying permutations with elements of $\mathbb{Q}\langle\langle A^{*}\rangle\rangle$,
we can prove our generalized cluster method for permutations using
the cluster method for words.\footnote{It is possible to prove our generalized cluster method in $\mathbb{Q}[\mathfrak{S}]$,
and we do this in the extended abstract \cite{Zhuang2022}. While
that approach is more direct, our approach here gives a unified treatment
of the cluster method for words and the cluster method for permutations.}

The Malvenuto\textendash Reutenauer algebra $\mathbf{FQSym}$ contains
an important subalgebra related to descent sets. Given a composition
$L$, let $\mathbf{r}_{L}$ be the sum of all $\mathbf{G}_{\pi}$
for which $\pi$ has descent composition $L$; that is, let
\[
\mathbf{r}_{L}\coloneqq\sum_{\Comp(\pi)=L}\mathbf{G}_{\pi}.
\]
The $\{\mathbf{r}_{L}\}_{L\vDash n,\,n\geq0}$ is a linearly independent
set and spans a $\mathbb{Q}$-subalgebra of $\mathbf{FQSym}$ called
the \textit{algebra of noncommutative symmetric functions}, denoted
$\mathbf{Sym}$. Noncommutative symmetric functions were introduced
in the seminal paper \cite{ncsf1} of Gelfand et al., but implicitly
appeared earlier in Gessel's Ph.D.\ thesis \cite{gessel-thesis}.

Let $\iota\colon\mathbf{Sym}\rightarrow\mathbf{FQSym}$ denote the
canonical inclusion map from $\mathbf{Sym}$ to $\mathbf{FQSym}$.
There is also a natural surjection $\rho\colon\mathbf{FQSym}\rightarrow\mathrm{QSym}$
given by 
\begin{equation}
\rho(\mathbf{G}_{\pi})\coloneqq F_{\Comp(\pi^{-1})}.\label{e-cansurj}
\end{equation}
The map $\rho$ explains the name ``free quasisymmetric functions'',
as the elements of $\mathbf{FQSym}$ lift quasisymmetric functions
to a noncommutative setting. We will need $\rho$ to define the homomorphisms
on $\mathbf{FQSym}$ that we will use to study inverse statistics.

It is worth mentioning that $\mathrm{QSym}$, $\mathbf{FQSym}$, and
$\mathbf{Sym}$ are prototypical examples of combinatorial Hopf algebras,
but we only need the algebra structure in our work. See \cite{Grinberg2020}
for a survey on Hopf algebras in combinatorics, including more on
the relationship between $\mathrm{QSym}$, $\mathbf{FQSym}$, and
$\mathbf{Sym}$.

\subsection{Symmetries on permutations}

Given $\pi\in\mathfrak{S}_{n}$, we define its \textit{reverse} $\pi^{r}$
and its \textit{complement} $\pi^{c}$ by
\[
\pi^{r}\coloneqq\pi_{n}\pi_{n-1}\cdots\pi_{1}\quad\text{and}\quad\pi^{c}\coloneqq(n+1-\pi_{1})(n+1-\pi_{2})\cdots(n+1-\pi_{n}),
\]
respectively, and its \textit{reverse-complement} $\pi^{rc}$ by $\pi^{rc}\coloneqq(\pi^{r})^{c}=(\pi^{c})^{r}$. 

It is clear that\textemdash along with permutation inversion $(\pi\mapsto\pi^{-1})$\textemdash reversion,
complementation, and reverse-complementation are all involutions on
$\mathfrak{S}_{n}$, and they can be identified with rigid motions
in the dihedral group of the square acting on permutation matrices.
As such, it is easy to see that $(\pi^{-1})^{r}=(\pi^{c})^{-1}$ and
$(\pi^{-1})^{c}=(\pi^{r})^{-1}$\textemdash and hence $(\pi^{-1})^{rc}=(\pi^{rc})^{-1}$\textemdash for
all $\pi\in\mathfrak{S}$.
\begin{prop}
\label{p-rcist} For any $\pi\in\mathfrak{S}_{n}$ with $n\geq1$,
we have 
\begin{enumerate}
\item [\normalfont{(a)}] $\imaj(\pi^{r})={\binom{n}{2}}-\imaj(\pi)$,
\item [\normalfont{(b)}] $\imaj(\pi^{rc})=\icomaj(\pi)$,
\item [\normalfont{(c)}] $\ides(\pi^{r})=\ides(\pi^{c})=n-1-\ides(\pi)$,
\item [\normalfont{(d)}] $\ides(\pi^{rc})=\ides(\pi)$, and
\item [\normalfont{(e)}] $\ipk(\pi^{c})=\ipk(\pi)$.
\end{enumerate}
\end{prop}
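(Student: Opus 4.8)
The plan is to reduce each part to a statement about descent sets of inverses, using the identities $(\pi^{-1})^{r}=(\pi^{c})^{-1}$, $(\pi^{-1})^{c}=(\pi^{r})^{-1}$, and $(\pi^{-1})^{rc}=(\pi^{rc})^{-1}$ recorded just before the proposition, together with the defining relation $\ist(\pi)=\st(\pi^{-1})$. The key observation is that all five statistics in play ($\des$, $\maj$, $\comaj$, $\pk$) are descent statistics, so everything comes down to understanding how $\Des$ transforms under reversal and complementation. Concretely, for $\tau\in\mathfrak{S}_n$ one has $\Des(\tau^{c})=[n-1]\setminus\Des(\tau)$ (since complementing flips every comparison), and $\Des(\tau^{r})=\{n-i:i\in\Des(\tau)\}$ (reversing sends the descent at position $i$ to a descent at position $n-i$). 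These two elementary facts, which I would state and prove in a sentence each, are the only real content; the rest is bookkeeping.

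First I would handle part (c). Write $\ides(\pi^{c})=\des((\pi^{c})^{-1})=\des((\pi^{-1})^{r})$ using $(\pi^{-1})^{c}$... — more carefully, $(\pi^{c})^{-1}=(\pi^{-1})^{r}$, so $\ides(\pi^{c})=\des((\pi^{-1})^{r})=|\Des((\pi^{-1})^{r})|=|\{n-i:i\in\Des(\pi^{-1})\}|=|\Des(\pi^{-1})|=\ides(\pi)$. That only gives $\ides(\pi^{c})=\ides(\pi)$, which is not what's claimed, so I must instead use the reverse on the outside: actually the cleaner route is $(\pi^{r})^{-1}=(\pi^{-1})^{c}$, giving $\ides(\pi^{r})=\des((\pi^{-1})^{c})=|[n-1]\setminus\Des(\pi^{-1})|=n-1-\ides(\pi)$; and $(\pi^{c})^{-1}=(\pi^{-1})^{r}$ gives $\ides(\pi^{c})=|\Des((\pi^{-1})^{r})|=|\Des(\pi^{-1})|=\ides(\pi)$. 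Hmm — this contradicts (c), so the correct pairing must be the other one: $\ides(\pi^{c})=\des((\pi^{c})^{-1})$ and I need $(\pi^{c})^{-1}$. Since $(\pi^{-1})^{c}=(\pi^{r})^{-1}$, taking inverses of both sides and relabeling $\pi\mapsto\pi$ shows $(\pi^{c})^{-1}=(\pi^{-1})^{r}$... so I genuinely get $\ides(\pi^c)=\ides(\pi)$ from the size argument — meaning the reversal-of-descent-set must flip complementarily, i.e. I should double-check whether $\Des(\tau^r)=[n-1]\setminus\{n-i:i\in\Des(\tau)\}$. Indeed: $\tau^r$ has a descent at $j$ iff $\tau_{n-j+1}>\tau_{n-j}$ iff $n-j\notin\Des(\tau)$, so $\Des(\tau^r)=\{j:n-j\notin\Des(\tau)\}$, which has size $n-1-\des(\tau)$. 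That is the crucial sign I would get right carefully, and it resolves (a) and (c) simultaneously.

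With the corrected transformation rules $\Des(\tau^c)=[n-1]\setminus\Des(\tau)$ and $\Des(\tau^r)=\{j\in[n-1]:n-j\notin\Des(\tau)\}$ in hand, each part falls out mechanically. For (c): $\ides(\pi^r)=\des((\pi^{-1})^c)=n-1-\ides(\pi)$ and $\ides(\pi^c)=\des((\pi^{-1})^r)=n-1-\ides(\pi)$. For (d): compose, $(\pi^{rc})^{-1}=(\pi^{-1})^{rc}$ and $\Des(\tau^{rc})=\{j:n-j\in\Des(\tau)\}$, so $\ides(\pi^{rc})=\ides(\pi)$. For (a): $\imaj(\pi^r)=\maj((\pi^{-1})^c)=\sum_{i\in[n-1]\setminus\Des(\pi^{-1})}i=\binom{n}{2}-\imaj(\pi)$. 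For (b): $\imaj(\pi^{rc})=\maj((\pi^{-1})^{rc})=\sum_{j:\,n-j\in\Des(\pi^{-1})}j=\sum_{i\in\Des(\pi^{-1})}(n-i)=\icomaj(\pi)$, invoking the definition of $\comaj$ from (\ref{e-comajdesmaj}). For (e): I would observe that complementation fixes the \emph{peak set} up to the reindexing $i\mapsto n+1-i$ on $\{2,\dots,n-1\}$ — a peak at $i$ of $\tau$ (pattern $132$ or $231$) becomes a peak at $n+1-i$ of $\tau^c$ — hence $\pk(\tau^c)=\pk(\tau)$, and then $\ipk(\pi^c)=\pk((\pi^{-1})^r)=\pk(\pi^{-1})=\ipk(\pi)$ since $\pk$ is also reversal-invariant.

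The main obstacle, as the false start above illustrates, is purely sign-hygiene: getting the descent-set transformation rules under $r$, $c$, and their composite exactly right, and pairing them with the correct one of the two identities $(\pi^{-1})^r=(\pi^c)^{-1}$, $(\pi^{-1})^c=(\pi^r)^{-1}$. Once those are nailed down, no step is longer than a line. I would organize the write-up by first proving the two descent-set lemmas (and the peak-set analogue for (e)), then dispatching (a)--(e) in the order (c), (d), (a), (b), (e), since (c) and (d) establish the pattern that (a) and (b) refine and (e) needs the separate peak argument.
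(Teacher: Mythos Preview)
Your approach is essentially the same as the paper's: both combine the inverse-swap identities $(\pi^{r})^{-1}=(\pi^{-1})^{c}$, $(\pi^{c})^{-1}=(\pi^{-1})^{r}$, $(\pi^{rc})^{-1}=(\pi^{-1})^{rc}$ with the transformation rules for $\des$, $\maj$, $\comaj$, $\pk$ under $r$, $c$, $rc$. The only difference is that the paper quotes those transformation rules (citing \cite{Dokos2012} for $\maj(\pi^{c})=\binom{n}{2}-\maj(\pi)$ and stating $\des(\pi^{r})=\des(\pi^{c})=n-1-\des(\pi)$, $\des(\pi^{rc})=\des(\pi)$, $\maj(\pi^{rc})=\comaj(\pi)$, $\pk(\pi^{r})=\pk(\pi)$ without proof), whereas you derive them from scratch via the explicit descent-set transformations $\Des(\tau^{c})=[n-1]\setminus\Des(\tau)$ and $\Des(\tau^{r})=\{j:n-j\notin\Des(\tau)\}$.

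One small correction for your write-up of (e): the sentence ``complementation fixes the peak set up to the reindexing $i\mapsto n+1-i$'' is wrong---complementation turns peaks into valleys at the \emph{same} position, while it is \emph{reversal} that reindexes peaks via $i\mapsto n+1-i$. Fortunately your actual chain $\ipk(\pi^{c})=\pk((\pi^{c})^{-1})=\pk((\pi^{-1})^{r})=\pk(\pi^{-1})=\ipk(\pi)$ only uses reversal-invariance of $\pk$, which is correct (and is exactly the identity the paper invokes), so just drop the misleading sentence about complementation.
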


\begin{proof}
Since $\maj(\pi^{c})={\binom{n}{2}}-\maj(\pi)$ for all $\pi\in\mathfrak{S}_{n}$
\cite[Lemma 2.5]{Dokos2012}, we have
\[
\imaj(\pi^{r})=\maj((\pi^{r})^{-1})=\maj((\pi^{-1})^{c})={\binom{n}{2}}-\maj(\pi^{-1})={\binom{n}{2}}-\imaj(\pi),
\]
which proves (a). The proofs of (b)\textendash (e) are similar, using
the identities $\maj(\pi^{rc})=\comaj(\pi)$, $\des(\pi^{r})=\des(\pi^{c})=n-1-\des(\pi)$,
$\des(\pi^{rc})=\des(\pi)$, and $\pk(\pi^{r})=\pk(\pi)$.
\end{proof}
Let us define $\Gamma^{r}\coloneqq\{\,\pi^{r}:\pi\in\Gamma\,\}$ for
a set $\Gamma\subseteq\mathfrak{S}$ of permutations, and we define
$\Gamma^{c}$ and $\Gamma^{rc}$ in the analogous way. The next proposition
tells us that if two sets of patterns $\Gamma$ and $\Delta$ are
related by one of these symmetries, then we may be able to compute
$A_{\Gamma,n}^{(\ides,\imaj)}(s,t,q)$, $A_{\Gamma,n}^{\ides}(s,t)$,
or $P_{\Gamma,n}^{\ipk}(s,t)$ using the corresponding polynomials
for $\Delta$. For example, once we obtain a generating function formula
for the polynomials $A_{12\cdots m,n}^{(\ides,\imaj)}(t,q)$ in Section
4.2, we can use this formula along with Proposition \ref{p-rcpoly}
(a) to compute the polynomials $A_{m\cdots21,n}^{(\ides,\imaj)}(t,q)$.
\begin{prop}
\label{p-rcpoly}For any $\pi\in\mathfrak{S}_{n}$ with $n\geq1$,
we have
\begin{enumerate}
\item [\normalfont{(a)}] $A_{\Gamma^{r},n}^{(\ides,\imaj)}(s,t,q)=t^{n+1}q^{\binom{n}{2}}A_{\Gamma,n}^{(\ides,\imaj)}(s,t^{-1},q^{-1})$,
\item [\normalfont{(b)}] $A_{\Gamma^{rc},n}^{(\ides,\imaj)}(s,t,q)=A_{\Gamma,n}^{(\ides,\icomaj)}(s,t,q)$,
\item [\normalfont{(c)}] $A_{\Gamma^{rc},n}^{\ides}(s,t)=A_{\Gamma,n}^{\ides}(s,t)$,
\item [\normalfont{(d)}] $A_{\Gamma^{r},n}^{\ides}(s,t)=A_{\Gamma^{c},n}^{\ides}(s,t)=t^{n+1}A_{\Gamma,n}^{\ides}(s,t^{-1})$,
and
\item [\normalfont{(e)}] $P_{\Gamma^{c},n}^{\ipk}(s,t)=P_{\Gamma,n}^{\ipk}(s,t)$.
\end{enumerate}
\end{prop}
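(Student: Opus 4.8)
The plan is to exploit that each of the three symmetries $\pi\mapsto\pi^{r}$, $\pi\mapsto\pi^{c}$, $\pi\mapsto\pi^{rc}$ is an involution, hence a bijection, on $\mathfrak{S}_{n}$, and to reindex the defining sum of each left-hand polynomial accordingly. For instance, to prove (a) I would write $A_{\Gamma^{r},n}^{(\ides,\imaj)}(s,t,q)=\sum_{\pi\in\mathfrak{S}_{n}}s^{\occ_{\Gamma^{r}}(\pi)}t^{\ides(\pi)+1}q^{\imaj(\pi)}$ and substitute $\pi=\mu^{r}$; as $\mu$ ranges over $\mathfrak{S}_{n}$ so does $\pi$, and the summand becomes $s^{\occ_{\Gamma^{r}}(\mu^{r})}t^{\ides(\mu^{r})+1}q^{\imaj(\mu^{r})}$. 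It then remains to transfer each of the three exponents from $\mu^{r}$ back to $\mu$. The substitutions $\pi=\mu^{rc}$ and $\pi=\mu^{c}$ handle (b), (e), and the two halves of (d) in the same way, while (c) follows from (b) by specializing $q=1$, using that $A_{\Gamma,n}^{\ides}(s,t)=A_{\Gamma,n}^{(\ides,\imaj)}(s,t,1)$ and $A_{\Gamma,n}^{(\ides,\icomaj)}(s,t,1)=A_{\Gamma,n}^{\ides}(s,t)$.

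The transfer of the occurrence exponent rests on a short lemma I would record first: for any $\sigma,\pi\in\mathfrak{S}$ one has $\occ_{\sigma}(\pi^{r})=\occ_{\sigma^{r}}(\pi)$, $\occ_{\sigma}(\pi^{c})=\occ_{\sigma^{c}}(\pi)$, and $\occ_{\sigma}(\pi^{rc})=\occ_{\sigma^{rc}}(\pi)$. This holds because standardization commutes with reversal and with complementation on any fixed window of length $m$: if $\std(\pi_{i}\pi_{i+1}\cdots\pi_{i+m-1})=\sigma$, then reversing (resp.\ complementing) $\pi$ carries this window to a length-$m$ window of $\pi^{r}$ (resp.\ $\pi^{c}$) whose standardization is $\sigma^{r}$ (resp.\ $\sigma^{c}$), and conversely, which gives a bijection between occurrence windows. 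Summing over $\sigma\in\Gamma$ and using that $r,c,rc$ are involutions (so $(\Gamma^{r})^{r}=\Gamma$, etc.) then yields $\occ_{\Gamma^{x}}(\mu^{x})=\occ_{\Gamma}(\mu)$ for $x\in\{r,c,rc\}$, which is exactly what is needed after the reindexing.

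With the occurrence exponent under control, the remaining exponents are handled by Proposition \ref{p-rcist}. For (a): part (c) of that proposition gives $\ides(\mu^{r})+1=n-\ides(\mu)=(n+1)-(\ides(\mu)+1)$, contributing a factor $t^{n+1}$ and replacing $t$ by $t^{-1}$; part (a) gives $\imaj(\mu^{r})={n\choose 2}-\imaj(\mu)$, contributing $q^{{n\choose 2}}$ and replacing $q$ by $q^{-1}$; together these assemble into $t^{n+1}q^{{n\choose 2}}A_{\Gamma,n}^{(\ides,\imaj)}(s,t^{-1},q^{-1})$. For (b): parts (d) and (b) give $\ides(\mu^{rc})=\ides(\mu)$ and $\imaj(\mu^{rc})=\icomaj(\mu)$, turning the sum into $A_{\Gamma,n}^{(\ides,\icomaj)}(s,t,q)$. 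Part (c) is the $q=1$ case of (b). Part (d) is the $q=1$ case of the $r$-substitution just described, together with the identical computation using $\pi=\mu^{c}$ and the equality $\ides(\mu^{c})=n-1-\ides(\mu)$ from Proposition \ref{p-rcist}(c). Part (e) uses $\pi=\mu^{c}$ and $\ipk(\mu^{c})=\ipk(\mu)$ from Proposition \ref{p-rcist}(e), with no rescaling of $t$ or $q$ at all.

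The main obstacle, and really the only step that is not bookkeeping, is the occurrence-transfer lemma, i.e.\ verifying carefully that reversal and complementation act on consecutive-pattern occurrences by reversing (resp.\ complementing) the pattern; everything else is a one-line reindexing combined with the appropriate clause of Proposition \ref{p-rcist}. I would also note that the $n=0$ cases are vacuous, since all of these polynomials are defined to equal $1$ there, so the hypothesis $n\geq 1$ is harmless.
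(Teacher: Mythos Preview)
Your proposal is correct and follows essentially the same approach as the paper: reindex the defining sum via the relevant involution, use the fact that occurrences of $\sigma$ in $\pi$ correspond bijectively to occurrences of $\sigma^{r}$ (respectively $\sigma^{c}$, $\sigma^{rc}$) in $\pi^{r}$ (respectively $\pi^{c}$, $\pi^{rc}$), and then invoke Proposition~\ref{p-rcist} for the statistic transfers. The paper only writes out part (a) explicitly and leaves the rest to the reader, so your added observations that (c) and half of (d) are $q=1$ specializations of (b) and (a) are perfectly in keeping with what the paper intends.
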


\begin{proof}
Each of these identities follows from algebraic manipulations, Proposition
\ref{p-rcist}, and the fact that an occurrence of a pattern $\sigma$
in $\pi$ directly corresponds to an occurrence of $\sigma^{r}$ (respectively,
$\sigma^{c}$ and $\sigma^{rc}$) in $\pi^{r}$ (respectively, $\pi^{c}$
and $\pi^{rc}$). We demonstrate the proof for (a) and leave the rest
to the reader: 
\begin{align*}
t^{n+1}q^{{n \choose 2}}A_{\Gamma,n}^{(\ides,\imaj)}(s,t^{-1},q^{-1}) & =t^{n+1}q^{{n \choose 2}}\sum_{\pi\in\mathfrak{S}_{n}}s^{\occ_{\Gamma}(\pi)}(t^{-1})^{\ides(\pi)+1}(q^{-1})^{\imaj(\pi)}\\
 & =\sum_{\pi\in\mathfrak{S}_{n}}s^{\occ_{\Gamma}(\pi)}t^{n+1-(n-1-\ides(\pi^{r}))-1}q^{{n \choose 2}-\imaj(\pi)}\\
 & =\sum_{\pi\in\mathfrak{S}_{n}}s^{\occ_{\Gamma^{r}}(\pi^{r})}t^{\ides(\pi^{r})+1}q^{\imaj(\pi^{r})}\\
 & =\sum_{\pi\in\mathfrak{S}_{n}}s^{\occ_{\Gamma^{r}}(\pi)}t^{\ides(\pi)+1}q^{\imaj(\pi)}\\
 & =A_{\Gamma^{r},n}^{(\ides,\imaj)}(s,t,q).\tag*{\qedhere}
\end{align*}
\end{proof}

\section{The cluster method in Malvenuto\textendash Reutenauer}

\subsection{Main result}

Given a set of permutations $\Gamma\subseteq\mathfrak{S}$, define
\[
\bar{F}_{\Gamma}(s)\coloneqq\sum_{\pi\in\mathfrak{S}}\mathbf{G}_{\pi}s^{\occ_{\Gamma}(\pi)}\quad\text{and}\quad\bar{R}_{\Gamma}(s)\coloneqq\sum_{\pi\in\mathfrak{S}}\mathbf{G}_{\pi}\sum_{c\in C_{\Gamma,\pi}}s^{\mk_{\Gamma}(c)},
\]
which are liftings of the exponential generating functions $F_{\Gamma}(s,x)$
and $R_{\Gamma}(s,x)$ from Section 2.2.
\begin{thm}[Cluster method in $\mathbf{FQSym}$]
\label{t-gjcmfqsym} Let $\Gamma\subseteq\mathfrak{S}$ be a set
of permutations, each of length at least 2. Then
\begin{equation}
\bar{F}_{\Gamma}(s)=\Big(1-\mathbf{G}_{1}-\bar{R}_{\Gamma}(s-1)\Big)^{-1}.\label{e-fqsymgjcm}
\end{equation}
\end{thm}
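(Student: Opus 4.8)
The plan is to deduce the $\mathbf{FQSym}$ identity (\ref{e-fqsymgjcm}) directly from the cluster method for words (Theorem \ref{t-gjcm}) by working inside $\mathbb{Q}\langle\langle A^{*}\rangle\rangle$ with the alphabet $A=\{X_{1},X_{2},\dots\}$, and then translating back to the $\mathbf{G}_{\pi}$ basis. The key observation is that every permutation $\pi\in\mathfrak{S}_{n}$ corresponds, via standardization, to the family of monomials $\mathbf{X}=X_{i_{1}}\cdots X_{i_{n}}$ with $\std(\mathbf{X})=\pi$, and $\mathbf{G}_{\pi}$ is the sum of exactly these monomials. By Proposition \ref{p-std}, the relative order of the entries of $\mathbf{X}$ is governed by weak inequalities on the subscripts, so occurrences of consecutive patterns in $\pi$ correspond bijectively to occurrences (in the word sense) of the associated subwords in $\mathbf{X}$. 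This suggests setting up a word-cluster problem whose set $B$ of forbidden subwords consists of all monomials $\mathbf{X}$ with $\std(\mathbf{X})\in\Gamma$.

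First I would fix $\Gamma\subseteq\mathfrak{S}$ with all patterns of length $\geq 2$, and define $B\coloneqq\{\,\mathbf{X}\in A^{*}:\std(\mathbf{X})\in\Gamma\,\}\subseteq A^{*}$; every element of $B$ has length $\geq 2$, so Theorem \ref{t-gjcm} applies. Next I would verify the two ``dictionary'' identities: that $F_{B}(s)=\bar{F}_{\Gamma}(s)$ and $R_{B}(s)=\bar{R}_{\Gamma}(s)$ under the identification of $\mathbf{G}_{\pi}$ with $\sum_{\std(\mathbf{X})=\pi}\mathbf{X}$. For the first, I group the words $w\in A^{*}$ in the definition of $F_{B}(s)$ by their standardization; for a fixed $\pi$, every word $w$ with $\std(w)=\pi$ has the same number of $B$-occurrences, namely $\occ_{\Gamma}(\pi)$ (this is where Proposition \ref{p-std} is used, to match word-subword occurrences with consecutive-pattern occurrences), so $\sum_{\std(w)=\pi}w\,s^{\occ_{B}(w)}=\mathbf{G}_{\pi}s^{\occ_{\Gamma}(\pi)}$ and summing over $\pi$ gives $\bar{F}_{\Gamma}(s)$. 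For the second, I would argue that a marked word $(w,T)$ with respect to $B$ pushes forward to a marked permutation $(\std(w),\bar T)$ with respect to $\Gamma$ (reindexing the marked occurrences by their standardizations), that this correspondence preserves the cluster property (being a cluster is a purely positional condition on which positions are covered by marked occurrences, unchanged by standardization) and the mark count $\mk$, and that for each fixed marked permutation $(\pi,\bar T)$ the fiber is exactly the set of $w$ with $\std(w)=\pi$; hence $R_{B}(s)=\bar R_{\Gamma}(s)$. Finally, since $\sum_{a\in A}a=\sum_{i}X_{i}=\mathbf{G}_{1}$, plugging these identities into the conclusion of Theorem \ref{t-gjcm} yields (\ref{e-fqsymgjcm}) as an identity in $\mathbb{Q}\langle\langle A^{*}\rangle\rangle[[s]]$, and both sides lie in $\mathbf{FQSym}[[s]]$, so the identity holds there.

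The main obstacle is the bookkeeping in the cluster dictionary: one must check carefully that the notions of \emph{marked occurrence}, \emph{concatenation}, and \emph{cluster} for permutations (as defined in Section 2.2) are exactly the images under standardization of the corresponding notions for words over $A$, and in particular that the non-uniqueness of concatenation for marked permutations --- noted in the excerpt --- does not cause any overcounting or undercounting when passing between $F_{B}$, $R_{B}$ and $\bar F_{\Gamma}$, $\bar R_{\Gamma}$. Concretely, the subtle point is that a word $w$ determines its marked-occurrence set purely positionally, and the number and structure of $\Gamma$-clusters on $\pi$ matches the number and structure of $B$-clusters on any word standardizing to $\pi$; once this is established the rest is a routine substitution. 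I would also note, for completeness, that all the relevant sums converge in the appropriate completed/graded topology on $\mathbb{Q}\langle\langle A^{*}\rangle\rangle[[s]]$ so that the inverse on the right-hand side is well defined, exactly as in the word case.
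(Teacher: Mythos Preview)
Your proposal is correct and follows essentially the same approach as the paper: both define $B=\{\mathbf{X}\in A^{*}:\std(\mathbf{X})\in\Gamma\}$, establish the dictionary identities $F_{B}(s)=\bar F_{\Gamma}(s)$ and $R_{B}(s)=\bar R_{\Gamma}(s)$ by grouping words according to their standardization (using Proposition~\ref{p-std} to match occurrences and clusters), note that $\sum_{a\in A}a=\mathbf{G}_{1}$, and then invoke Theorem~\ref{t-gjcm}. The paper packages the verification of the dictionary identities into three short lemmas (Lemmas~\ref{l-stdsubword}, \ref{l-occstd}, \ref{l-mkstd}), but the content is the same as what you outline.
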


We will prove Theorem \ref{t-gjcmfqsym} using Theorem \ref{t-gjcm},
the noncommutative version of the original Goulden\textendash Jackson
cluster method for words. Our proof will rely on several preliminary
lemmas, which we establish below.
\begin{lem}
\label{l-stdsubword} Let $u=u_{1}u_{2}\cdots u_{n}$ and $v=v_{1}v_{2}\cdots v_{n}$
be two words in $\mathbb{P}^{*}$. If $\std(u)=\std(v)$, then for
any $0\leq m\leq n-1$ and $1\leq k\leq n-m$, we have $\std(u_{k}u_{k+1}\cdots u_{k+m})=\std(v_{k}v_{k+1}\cdots v_{k+m})$.
\end{lem}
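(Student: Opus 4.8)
The plan is to reduce the statement to Proposition \ref{p-std}, the characterization of standardization in terms of pairwise comparisons. The key observation is that $\std(u_k u_{k+1} \cdots u_{k+m})$ depends only on the relative order of the letters $u_k, u_{k+1}, \dots, u_{k+m}$, which in turn is captured by the comparisons $u_i \lessgtr u_j$ for $k \leq i < j \leq k+m$ (with ties broken left-to-right, i.e., by whether $u_i \leq u_j$). So first I would invoke Proposition \ref{p-std} applied to the full words $u$ and $v$: since $\std(u) = \std(v) = \tau$, we have for all $1 \leq i < j \leq n$ that $\tau_i < \tau_j$ if and only if $u_i \leq u_j$, and also $\tau_i < \tau_j$ if and only if $v_i \leq v_j$. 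Chaining these, we get that $u_i \leq u_j \iff v_i \leq v_j$ for every pair $1 \leq i < j \leq n$.

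Next I would restrict attention to the index range $\{k, k+1, \dots, k+m\}$. Write $u' = u_k u_{k+1} \cdots u_{k+m}$ and $v' = v_k v_{k+1} \cdots v_{k+m}$, and let $\tau' = \std(u')$, a permutation in $\mathfrak{S}_{m+1}$. Applying Proposition \ref{p-std} to the word $u'$, we know $\tau'$ is the unique permutation in $\mathfrak{S}_{m+1}$ such that, whenever $1 \leq a < b \leq m+1$, we have $\tau'_a < \tau'_b$ if and only if $u'_a \leq u'_b$, i.e., $u_{k+a-1} \leq u_{k+b-1}$. But by the previous paragraph, $u_{k+a-1} \leq u_{k+b-1}$ if and only if $v_{k+a-1} \leq v_{k+b-1}$, i.e., $v'_a \leq v'_b$. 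Hence $\tau'$ also satisfies the defining property of $\std(v')$ from Proposition \ref{p-std}, and by the uniqueness clause in that proposition, $\std(v') = \tau' = \std(u')$, which is exactly the claim.

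There is essentially no hard part here — the argument is a direct unwinding of definitions through Proposition \ref{p-std}. The only point requiring a bit of care is making sure the index shift between the subword positions $1, \dots, m+1$ and the original positions $k, \dots, k+m$ is handled correctly, and that the hypotheses $0 \leq m \leq n-1$ and $1 \leq k \leq n-m$ guarantee $k+m \leq n$ so that all the indices stay within range. One could alternatively prove this more elementarily by noting that $\std(w)_i$ equals the rank of $w_i$ among the letters of $w$ (breaking ties by position), and that two words with the same standardization have letters with matching ranks, so the ranks within any fixed window of positions also match after re-ranking; but routing through Proposition \ref{p-std} is cleaner and avoids re-proving a special case of it.
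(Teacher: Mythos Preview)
Your proof is correct and follows essentially the same approach as the paper: invoke Proposition~\ref{p-std} on the full words to obtain $u_i \leq u_j \iff v_i \leq v_j$ for all $1 \leq i < j \leq n$, then restrict to the window $k \leq i < j \leq k+m$ and conclude via Proposition~\ref{p-std} (or its uniqueness clause) that the subwords have the same standardization. Your version is a bit more explicit about the index shift and the use of uniqueness, but the argument is the same.
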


\begin{proof}
Recall from Proposition \ref{p-std} that if $\tau=\std(u)$, then
whenever $1\leq i<j\leq n$, we have $u_{i}\leq u_{j}$ if and only
if $\tau_{i}<\tau_{j}$. Since we are given that $\std(u)=\std(v)$,
we have $u_{i}\leq u_{j}$ if and only if $v_{i}\leq v_{j}$ for all
$1\leq i<j\leq n$. In particular, we have $u_{i}\leq u_{j}$ if and
only if $v_{i}\leq v_{j}$ for all $k\leq i<j\leq k+m$; thus $\std(u_{k}u_{k+1}\cdots u_{k+m})=\std(v_{k}v_{k+1}\cdots v_{k+m})$.
\end{proof}
For the remainder of this section, let $A$ be the set of noncommuting
variables $\{X_{1},X_{2},\dots\}$, let $M(\pi)$ be the set of monomials
in these variables whose standardization is $\pi$, and let $B=\bigsqcup_{\sigma\in\Gamma}M(\sigma)$.
\begin{lem}
\label{l-occstd} If $\mathbf{X}\in M(\pi)$, then $\occ_{B}(\mathbf{X})=\occ_{\Gamma}(\pi)$.
\end{lem}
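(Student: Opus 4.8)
The plan is to show that occurrences of words in $B$ inside a monomial $\mathbf{X}$ are in bijection with occurrences of patterns in $\Gamma$ inside $\pi = \std(\mathbf{X})$, with the bijection respecting the starting position of the occurrence. Write $\mathbf{X} = X_{i_1} X_{i_2} \cdots X_{i_n}$, and recall that $\occ_B(\mathbf{X})$ counts pairs $(j, v)$ where $v \in B$ and $v = X_{i_j} X_{i_{j+1}} \cdots X_{i_{j + |v| - 1}}$, while $\occ_\Gamma(\pi)$ counts pairs $(j, \sigma)$ where $\sigma \in \Gamma$ and $\std(\pi_j \pi_{j+1} \cdots \pi_{j + |\sigma| - 1}) = \sigma$.

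First I would fix a starting position $j$ and a length $m \geq 2$, and compare the length-$m$ factor $X_{i_j} \cdots X_{i_{j+m-1}}$ of $\mathbf{X}$ with the length-$m$ factor $\pi_j \cdots \pi_{j+m-1}$ of $\pi$. The key observation is that $\std(\pi_j \cdots \pi_{j+m-1}) = \std(i_j i_{j+1} \cdots i_{j+m-1})$. This follows from Lemma \ref{l-stdsubword} applied to the words $i_1 i_2 \cdots i_n$ and $\pi = \std(i_1 \cdots i_n)$ (which have the same standardization by definition of $\std(\mathbf{X})$ and $M(\pi)$), with the subword taken from position $j$ to position $j+m-1$. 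Hence the length-$m$ factor of $\mathbf{X}$ starting at $j$ is an occurrence of some $\sigma \in \Gamma$ — i.e.\ lies in $M(\sigma) \subseteq B$ — precisely when $\std(X_{i_j} \cdots X_{i_{j+m-1}}) := \std(i_j \cdots i_{j+m-1}) = \sigma$, which by the previous sentence holds precisely when the length-$m$ factor of $\pi$ starting at $j$ is an occurrence of $\sigma$ as a consecutive pattern. Since each $\sigma \in \Gamma$ has length at least $2$, and distinct patterns in $\Gamma$ have disjoint sets of monomials, summing over all valid $(j, \sigma)$ gives $\occ_B(\mathbf{X}) = \occ_\Gamma(\pi)$.

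The main (and really only) subtlety is bookkeeping: making sure that a word $v \in B$ occurring at position $j$ in $\mathbf{X}$ corresponds to exactly one pattern $\sigma \in \Gamma$ and vice versa, so that nothing is double-counted. This is handled by the fact that $B = \bigsqcup_{\sigma \in \Gamma} M(\sigma)$ is a disjoint union (a monomial $\mathbf{X}'$ has a unique standardization, so it belongs to exactly one $M(\sigma)$), together with the identity $\std(\pi_j \cdots \pi_{j+m-1}) = \std(X_{i_j} \cdots X_{i_{j+m-1}})$ established above — this makes the correspondence $(j,v) \leftrightarrow (j, \std(v))$ a well-defined bijection between marked occurrences of $B$-words in $\mathbf{X}$ and occurrences of $\Gamma$-patterns in $\pi$. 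I expect this to be essentially immediate once Lemma \ref{l-stdsubword} is invoked; the only thing to be careful about is that the relevant range of indices $j \le i < j' \le j+m-1$ in Lemma \ref{l-stdsubword} matches the factor under consideration, which it does with $m$ there replaced by $m-1$ and $k = j$.
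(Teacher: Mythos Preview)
Your proposal is correct and follows essentially the same approach as the paper: both arguments apply Lemma~\ref{l-stdsubword} to the words $i_1 i_2 \cdots i_n$ and $\pi$ (which share the same standardization) to conclude that factors of $\mathbf{X}$ and factors of $\pi$ at the same position have the same standardization, yielding the desired bijection between $B$-occurrences in $\mathbf{X}$ and $\Gamma$-occurrences in $\pi$. Your version is slightly more explicit about the bookkeeping (the disjoint-union structure of $B$), but the substance is identical.
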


\begin{proof}
Write $\pi=\pi_{1}\pi_{2}\cdots\pi_{n}$ and $\mathbf{X}=X_{i_{1}}X_{i_{2}}\cdots X_{i_{n}}$.
Since $\mathbf{X}\in M(\pi)$, we have that $\std(i_{1}i_{2}\cdots i_{n})=\pi$.
Suppose that $X_{i_{k}}X_{i_{k+1}}\cdots X_{i_{k+m}}$ is an occurrence
of a word from $B$, so $X_{i_{k}}X_{i_{k+1}}\cdots X_{i_{k+m}}\in M(\sigma)$
for some $\sigma\in\Gamma$ and thus 
\[
\std(i_{k}i_{k+1}\cdots i_{k+m})=\std(X_{i_{k}}X_{i_{k+1}}\cdots X_{i_{k+m}})=\sigma.
\]
Since $\std(i_{1}i_{2}\cdots i_{n})=\pi$ and $\std(i_{k}i_{k+1}\cdots i_{k+m})=\sigma$,
it follows from Lemma \ref{l-stdsubword} that $\std(\pi_{k}\pi_{k+1}\cdots\pi_{k+m})=\sigma$.
In other words, $\pi_{k}\pi_{k+1}\cdots\pi_{k+m}$ is an occurrence
of $\sigma$ in $\pi$. We can go backward to see that there is a
bijection between occurrences of words from $B$ in $\mathbf{X}$
and patterns from $\Gamma$ in $\pi$, which shows $\occ_{B}(\mathbf{X})=\occ_{\Gamma}(\pi)$.
\end{proof}
\begin{lem}
\label{l-mkstd} If $\mathbf{X}\in M(\pi)$, then $\sum_{c\in C_{B,\mathbf{X}}}s^{\mk_{B}(c)}=\sum_{c\in C_{\Gamma,\pi}}s^{\mk_{\Gamma}(c)}$.
\end{lem}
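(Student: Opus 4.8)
The plan is to produce an explicit bijection between the set $C_{B,\mathbf{X}}$ of $B$-clusters on $\mathbf{X}$ and the set $C_{\Gamma,\pi}$ of $\Gamma$-clusters on $\pi$ which preserves the number of marked occurrences; summing $s^{\mk}$ over both sides then gives the identity. Write $\mathbf{X}=X_{i_{1}}X_{i_{2}}\cdots X_{i_{n}}$, so that $\std(i_{1}i_{2}\cdots i_{n})=\pi$. The key building block is already present in the proof of Lemma~\ref{l-occstd}: there is a bijection $\beta$ between marked occurrences $(k,v)$ of words $v\in B$ in $\mathbf{X}$ and marked occurrences $(k,\sigma)$ of patterns $\sigma\in\Gamma$ in $\pi$, sending $(k,v)$ with $v=X_{i_{k}}X_{i_{k+1}}\cdots X_{i_{k+|v|-1}}\in M(\sigma)$ to $(k,\sigma)$ where $\sigma=\std(\pi_{k}\pi_{k+1}\cdots\pi_{k+|v|-1})$; that this is well-defined and bijective is exactly what that proof establishes via Lemma~\ref{l-stdsubword}. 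What matters here is that $\beta$ preserves both the starting position $k$ and the length ($|v|=|\sigma|$) of each marked occurrence.

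First I would extend $\beta$ to sets: for a set $T$ of marked occurrences of $B$-words in $\mathbf{X}$, put $\beta(T)\coloneqq\{\beta(x):x\in T\}$, a set of marked occurrences of $\Gamma$-patterns in $\pi$ with $|\beta(T)|=|T|$. This gives a bijection $(\mathbf{X},T)\mapsto(\pi,\beta(T))$ from marked words on $\mathbf{X}$ with respect to $B$ to marked permutations on $\pi$ with respect to $\Gamma$, and plainly $\mk_{B}((\mathbf{X},T))=\mk_{\Gamma}((\pi,\beta(T)))$.

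The heart of the argument is to check that this bijection carries $B$-clusters to $\Gamma$-clusters and back. For this I would first record a positional reformulation of the cluster condition, valid in both settings: a marked word (resp.\ marked permutation) of length $n$ \emph{fails} to be a cluster precisely when there is some $p\in\{1,2,\dots,n-1\}$ such that every marked occurrence $(j,v)$ in it lies within $\{1,\dots,p\}$ (i.e.\ $j+|v|-1\leq p$) or within $\{p+1,\dots,n\}$ (i.e.\ $j\geq p+1$); equivalently, no marked occurrence straddles position $p$. In the word case this is immediate from the uniqueness of concatenation of marked words. In the permutation case one direction is clear, and for the other one takes $\pi^{(1)}=\std(\pi_{1}\cdots\pi_{p})$ and $\pi^{(2)}=\std(\pi_{p+1}\cdots\pi_{n})$ and observes, using Lemma~\ref{l-stdsubword} exactly as in the proof of Lemma~\ref{l-occstd}, that the marked occurrences of $\pi$ lying within $\{1,\dots,p\}$ are precisely the marked occurrences of $\pi^{(1)}$ and that those lying within $\{p+1,\dots,n\}$, shifted down by $p$, are precisely the marked occurrences of $\pi^{(2)}$. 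Since this reformulation refers only to $n$ and to the starting positions and lengths of the marked occurrences, all of which $\beta$ preserves, the marked word $(\mathbf{X},T)$ is a $B$-cluster if and only if $(\pi,\beta(T))$ is a $\Gamma$-cluster.

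Combining these observations, $(\mathbf{X},T)\mapsto(\pi,\beta(T))$ restricts to a $\mk$-preserving bijection $C_{B,\mathbf{X}}\to C_{\Gamma,\pi}$, and hence $\sum_{c\in C_{B,\mathbf{X}}}s^{\mk_{B}(c)}=\sum_{c\in C_{\Gamma,\pi}}s^{\mk_{\Gamma}(c)}$. The only step that requires genuine care is the positional reformulation of the cluster condition in the permutation setting — everything else is a transcription of the proof of Lemma~\ref{l-occstd} — but even that reduces to one more application of Lemma~\ref{l-stdsubword}, so I do not anticipate any real obstacle.
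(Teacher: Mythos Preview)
Your proposal is correct and follows exactly the approach the paper sketches: the paper's proof simply asserts that ``similar reasoning as above can be used to show that there is a bijection between $B$-clusters on $\mathbf{X}$ and $\Gamma$-clusters on $\pi$ which preserves the number (and positions) of marked occurrences'' and omits the details, while you have supplied precisely those details, including the positional characterization of clusters needed to verify that the bijection on marked words restricts to clusters.
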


\begin{proof}
Similar reasoning as above can be used to show that there is a bijection
between $B$-clusters on $\mathbf{X}$ and $\Gamma$-clusters on $\pi$
which preserves the number (and positions) of marked occurrences;
we omit the details.
\end{proof}
We are now ready to prove Theorem \ref{t-gjcmfqsym}.
\begin{proof}[Proof of Theorem \ref{t-gjcmfqsym}]
As a consequence of Lemma \ref{l-occstd}, we have that 
\[
\sum_{\pi\in\mathfrak{S}}\mathbf{G}_{\pi}s^{\occ_{\Gamma}(\pi)}=\sum_{\pi\in\mathfrak{S}}\mathbf{G}_{\pi}s^{\occ_{B}(\pi)}
\]
where $\occ_{B}(\pi)\coloneqq\occ_{B}(\mathbf{X})$ for any $\mathbf{X}\in M(\pi)$.
Hence 
\begin{alignat}{1}
\bar{F}_{\Gamma}(s) & =\sum_{\pi\in\mathfrak{S}}\mathbf{G}_{\pi}s^{\occ_{\Gamma}(\pi)}\nonumber \\
 & =\sum_{\pi\in\mathfrak{S}}\mathbf{G}_{\pi}s^{\occ_{B}(\pi)}\nonumber \\
 & =\sum_{\mathbf{X}\in A^{*}}\mathbf{X}s^{\occ_{B}(\mathbf{X})}\nonumber \\
 & =F_{B}(s).\label{e-fqsymF}
\end{alignat}
Similarly, Lemma \ref{l-mkstd} implies 
\[
\sum_{\pi\in\mathfrak{S}}\mathbf{G}_{\pi}\sum_{c\in C_{\Gamma,\pi}}s^{\mk_{\Gamma}(c)}=\sum_{\pi\in\mathfrak{S}}\mathbf{G}_{\pi}\sum_{c\in C_{B,\pi}}s^{\mk_{B}(c)}
\]
where $C_{B,\pi}\coloneqq C_{B,\mathbf{X}}$ for any $\mathbf{X}\in M(\pi)$.
Thus 
\begin{align}
\bar{R}_{\Gamma}(s) & =\sum_{\pi\in\mathfrak{S}}\mathbf{G}_{\pi}\sum_{c\in C_{\Gamma,\pi}}s^{\mk_{\Gamma}(c)}\nonumber \\
 & =\sum_{\pi\in\mathfrak{S}}\mathbf{G}_{\pi}\sum_{c\in C_{B,\pi}}s^{\mk_{B}(c)}\nonumber \\
 & =\sum_{\mathbf{X}\in A^{*}}\mathbf{X}\sum_{c\in C_{B,\mathbf{X}}}s^{\mk_{B}(c)}\nonumber \\
 & =R_{B}(s).\label{e-fqsymR}
\end{align}
Finally, we use Theorem \ref{t-gjcm} along with Equations (\ref{e-fqsymF})
and (\ref{e-fqsymR}) to conclude
\begin{align*}
\bar{F}_{\Gamma}(s) & =F_{B}(s)\\
 & =\Big(1-\mathbf{G}_{1}-R_{B}(s-1)\Big)^{-1}\\
 & =\Big(1-\mathbf{G}_{1}-\bar{R}_{\Gamma}(s-1)\Big)^{-1}.\qedhere
\end{align*}
\end{proof}

\subsection{Two basic homomorphisms}

We now demonstrate how Elizalde and Noy's cluster method for permutations,
as well as Elizalde's $q$-analogue, can be recovered from the cluster
method in $\mathbf{FQSym}$.

Given $\pi\in\mathfrak{S}_{n}$, define the maps $\Psi\colon\mathbf{FQSym}\rightarrow\mathbb{Q}[[x]]$
and $\Psi_{q}\colon\mathbf{FQSym}\rightarrow\mathbb{Q}[[q,x]]$ by
\[
\Psi(\mathbf{G}_{\pi})\coloneqq\frac{x^{n}}{n!}\qquad\mathrm{and}\qquad\Psi_{q}(\mathbf{G}_{\pi})\coloneqq q^{\inv(\pi)}\frac{x^{n}}{[n]_{q}!}
\]
and extending linearly.
\begin{prop}
The maps $\Psi$ and $\Psi_{q}$ are $\mathbb{Q}$-algebra homomorphisms.
\end{prop}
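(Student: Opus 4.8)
The plan is to verify the homomorphism property directly on the basis $\{\mathbf{G}_\pi\}$ and then extend bilinearly, checking compatibility with the product rule $\mathbf{G}_\pi \mathbf{G}_\sigma = \sum_{\tau \in C(\pi,\sigma)} \mathbf{G}_\tau$. For the map $\Psi$, this amounts to the identity
\[
\Psi(\mathbf{G}_\pi \mathbf{G}_\sigma) = \sum_{\tau \in C(\pi,\sigma)} \frac{x^{|\tau|}}{|\tau|!} = |C(\pi,\sigma)|\,\frac{x^{m+n}}{(m+n)!} = \binom{m+n}{m}\frac{x^{m+n}}{(m+n)!} = \frac{x^m}{m!}\cdot\frac{x^n}{n!} = \Psi(\mathbf{G}_\pi)\Psi(\mathbf{G}_\sigma),
\]
where $\pi \in \mathfrak{S}_m$, $\sigma \in \mathfrak{S}_n$, and we use that a shifted concatenation of $\pi$ and $\sigma$ is determined by choosing which $m$ of the $m+n$ positions receive the (standardized) letters of $\pi$, so $|C(\pi,\sigma)| = \binom{m+n}{m}$. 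One also checks $\Psi(\mathbf{G}_\emptyset) = 1$, so $\Psi$ preserves the identity.

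For $\Psi_q$, the argument is the $q$-analogue of the above: one must show
\[
\sum_{\tau \in C(\pi,\sigma)} q^{\inv(\tau)}\frac{x^{m+n}}{[m+n]_q!} = q^{\inv(\pi)}\frac{x^m}{[m]_q!}\cdot q^{\inv(\sigma)}\frac{x^n}{[n]_q!},
\]
which reduces to the statement that $\sum_{\tau \in C(\pi,\sigma)} q^{\inv(\tau)} = q^{\inv(\pi) + \inv(\sigma)}\qbinom{m+n}{m}$. The point is that for $\tau \in C(\pi,\sigma)$, the inversions of $\tau$ split into three types: inversions within the first $m$ positions (which contribute $\inv(\pi)$ since standardization preserves relative order), inversions within the last $n$ positions (contributing $\inv(\sigma)$), and inversions between the two blocks. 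Since all letters of the $\sigma$-block in $\tau$ are either all larger or individually interleaved with the $\pi$-block, summing $q^{(\text{number of cross-inversions})}$ over all $\binom{m+n}{m}$ interleavings is exactly the standard combinatorial interpretation of the $q$-binomial coefficient $\qbinom{m+n}{m}$ as the generating function for lattice paths (or equivalently, for the statistic counting inversions between two interleaved blocks). This is a well-known identity; I would cite it or give the one-line bijective justification via the $q$-Vandermonde / inversion-counting interpretation of $\qbinom{m+n}{m}$.

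The main obstacle — really the only substantive point — is the $q$-counting identity $\sum_{\tau \in C(\pi,\sigma)} q^{\inv(\tau)} = q^{\inv(\pi)+\inv(\sigma)}\qbinom{m+n}{m}$, and specifically justifying cleanly that the cross-block inversions are independent of $\pi$ and $\sigma$ and distribute according to $\qbinom{m+n}{m}$. This follows because in any $\tau \in C(\pi,\sigma)$, the $\pi$-block occupies values forming a set $S \subseteq [m+n]$ of size $m$ (in order) and the $\sigma$-block occupies $[m+n]\setminus S$; the number of cross-inversions depends only on $S$, equals the number of pairs $(i,j)$ with $i \in S$, $j \notin S$, $i > j$, and as $S$ ranges over all $m$-subsets this statistic has generating function $\qbinom{m+n}{m}$. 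Everything else is routine: linearity of $\Psi$ and $\Psi_q$ is built into their definitions, and the identity element is handled by the $n=0$ case. I would therefore write the proof as: (i) reduce to the basis identity via bilinearity; (ii) dispatch $\Psi$ using $|C(\pi,\sigma)| = \binom{m+n}{m}$; (iii) dispatch $\Psi_q$ using the $q$-binomial inversion identity, with a brief pointer to its standard proof.
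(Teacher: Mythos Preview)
Your proof is correct and follows essentially the same route as the paper: the paper cites the identity $\sum_{\tau\in C(\pi,\sigma)} q^{\inv(\tau)} = q^{\inv(\pi)+\inv(\sigma)}\qbinom{m+n}{m}$ (as \cite[Lemma~2.1]{Crane2018}) and computes exactly as you do, handling $\Psi_q$ first and recovering $\Psi$ by setting $q=1$. One small wording slip to fix: in justifying $|C(\pi,\sigma)|=\binom{m+n}{m}$ you say one chooses which $m$ of the $m+n$ \emph{positions} receive the letters of $\pi$, but in a shifted concatenation the positions are fixed (the first $m$ always carry $\pi$)---what varies is which $m$ of the $m+n$ \emph{values} go in the first block, exactly as you state correctly later in the $\Psi_q$ part.
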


\begin{proof}
Let $\pi\in\mathfrak{S}_{m}$ and $\sigma\in\mathfrak{S}_{n}$. Using
the multiplication rule for the $\mathbf{G}_{\pi}$, the definition
of the map $\Psi_{q}$, and the identity
\[
\sum_{\tau\in C(\pi,\sigma)}q^{\inv(\tau)}=q^{\inv(\pi)+\inv(\sigma)}{m+n \choose n}_{\!\!q}
\]
(see \cite[Lemma 2.1]{Crane2018}), we obtain {\allowdisplaybreaks
\begin{align*}
\Psi_{q}(\mathbf{G}_{\pi}\mathbf{G}_{\sigma}) & =\sum_{\tau\in C(\pi,\sigma)}\Psi_{q}(\mathbf{G}_{\tau})\\
 & =\sum_{\tau\in C(\pi,\sigma)}q^{\inv(\tau)}\frac{x^{m+n}}{[m+n]_{q}!}\\
 & =q^{\inv(\pi)+\inv(\sigma)}{m+n \choose n}_{\!\!q}\frac{x^{m+n}}{[m+n]_{q}!}\\
 & =q^{\inv(\pi)}\frac{x^{m}}{[m]_{q}!}q^{\inv(\sigma)}\frac{x^{n}}{[n]_{q}!}\\
 & =\Psi_{q}(\mathbf{G}_{\pi})\Psi_{q}(\mathbf{G}_{\sigma});
\end{align*}
hence, $\Psi_{q}$ is a homomorphism. Setting $q=1$ above yields
\[
\Psi(\mathbf{G}_{\pi}\mathbf{G}_{\sigma})=\sum_{\tau\in C(\pi,\sigma)}\frac{x^{m+n}}{(m+n)!}=\frac{x^{m}}{m!}\frac{x^{n}}{n!}=\Psi(\mathbf{G}_{\pi})\Psi(\mathbf{G}_{\sigma}),
\]
which shows that $\Psi$ is a homomorphism as well.}
\end{proof}
The homomorphism $\Psi$ is precisely the homomorphism $\phi$ of
Josuat-Verg\`es, Novelli, and Thibon mentioned in the introduction
of this paper. It is easy to see that upon applying $\Psi$ to our
cluster method in $\mathbf{FQSym}$ (Theorem \ref{t-gjcmfqsym}),
we recover Elizalde and Noy's cluster method for permutations (Theorem
\ref{t-gjcmperm}). Applying $\Psi_{q}$ instead yields a proof of
Elizalde's $q$-cluster method (Theorem \ref{t-qgcjmperm}).

We also note that $\Psi$ and $\Psi_{q}$ are closely related to two
homomorphisms, denoted $\Phi$ and $\Phi_{q}$, which appear in \cite{Zhuang2017}.
(The map $\Phi$ also appears in \cite{Gessel2019,Gessel2014,Zhuang2016}.)
These two homomorphisms are defined on the algebra $\mathbf{Sym}$
of noncommutative symmetric functions by the formulas
\[
\Phi(\mathbf{h}_{n})\coloneqq\frac{x^{n}}{n!}\quad\text{and}\quad\Phi_{q}(\mathbf{h}_{n})\coloneqq\frac{x^{n}}{[n]_{q}!},
\]
where $\mathbf{h}_{n}\coloneqq\sum_{i_{1}\leq\cdots\leq i_{n}}X_{i_{1}}X_{i_{2}}\cdots X_{i_{n}}=\mathbf{G}_{12\cdots n}.$
In fact, $\Phi$ and $\Phi_{q}$ are related to our homomorphisms
$\Psi$ and $\Psi_{q}$ by 
\[
\Phi=\Psi\circ\iota\quad\text{and}\quad\Phi_{q}=\Psi_{q}\circ\iota
\]
where $\iota$ is the canonical inclusion from $\mathbf{Sym}$ to
$\mathbf{FQSym}$.

\subsection{A note on the Hadamard product}

Our next goal is to define a family of homomorphisms on $\mathbf{FQSym}$
that can be used to produce other specializations of our generalized
cluster method. Our starting point is Theorem \ref{t-gzmain}, which
states that every shuffle-compatible descent statistic $\st$ gives
rise to a homomorphism $\phi_{\st}$ from the algebra $\mathrm{QSym}$
of quasisymmetric functions to the shuffle algebra ${\mathcal A}_{\st}$
of $\st$. Many of these algebras ${\mathcal A}_{\st}$ can be characterized
as subalgebras of various formal power series algebras in which the
multiplication is the ``Hadamard product'' in a variable $t$, which
we define below.

The operation of \textit{Hadamard product} $*$ on formal power series
in $t$ is defined by

\[
\Big(\sum_{n=0}^{\infty}a_{n}t^{n}\Big)*\Big(\sum_{n=0}^{\infty}b_{n}t^{n}\Big)\coloneqq\sum_{n=0}^{\infty}a_{n}b_{n}t^{n}.
\]
In our notation for formal power series algebras, we write $t*$ to
indicate that multiplication is the Hadamard product in $t$. For
example, $\mathbb{Q}[[t*,x]]$ is the $\mathbb{Q}$-algebra of formal
power series in the variables $t$ and $x$, where multiplication
is ordinary multiplication in $x$ but is the Hadamard product in
$t$. Thus we have
\begin{align*}
\Big(\sum_{n=0}^{\infty}\sum_{k=0}^{\infty}a_{n,k}x^{k}t^{n}\Big)*\Big(\sum_{n=0}^{\infty}\sum_{k=0}^{\infty}b_{n,k}x^{k}t^{n}\Big) & =\sum_{n=0}^{\infty}\Big(\sum_{k=0}^{\infty}a_{n,k}x^{k}\Big)\Big(\sum_{k=0}^{\infty}b_{n,k}x^{k}\Big)t^{n}\\
 & =\sum_{n=0}^{\infty}\sum_{k=0}^{\infty}\sum_{j=0}^{k}a_{n,j}b_{n,k-j}x^{k}t^{n}
\end{align*}
in $\mathbb{Q}[[t*,x]]$. Note that the identity of $\mathbb{Q}[[t*,x]]$
is $1/(1-t)$.

We write $f^{*\left\langle n\right\rangle }$ to mean the $n$-fold
Hadamard product of $f$, and $f^{*\left\langle -1\right\rangle }$
the inverse of $f$ with respect to Hadamard product. For example,
we have 
\[
(3t+t^{2})^{*\left\langle n\right\rangle }=\underset{n\text{ times}}{\underbrace{(3t+t^{2})*(3t+t^{2})*\cdots*(3t+t^{2})}}=3^{n}t+t^{2}
\]
and
\[
\Big(\frac{1}{1-t}-2xt\Big)^{*\left\langle -1\right\rangle }=\sum_{n=0}^{\infty}(2xt)^{*\left\langle n\right\rangle }=\frac{1}{1-t}+2xt+4x^{2}t+8x^{3}t+\cdots.
\]
We will always use the notations $*$, $*\left\langle n\right\rangle $,
or $*\left\langle -1\right\rangle $ for any expression involving
the Hadamard product; all other expressions should be interpreted
as using ordinary multiplication.

\subsection{Homomorphisms arising from shuffle-compatibility}

Given $\pi\in\mathfrak{S}_{n}$, let us define
\begin{align*}
\Psi_{(\ides,\icomaj)}\colon\mathbf{FQSym} & \rightarrow{\mathcal A}_{(\des,\comaj)}\subseteq\mathbb{Q}[[t*,q,x]],\\
\Psi_{\ipk}\colon\mathrm{\mathbf{FQSym}} & \rightarrow{\mathcal A}_{\pk}\subseteq\mathbb{Q}[[t*,x]],\text{ and}\\
\Psi_{\ilpk}\colon\mathrm{\mathbf{FQSym}} & \rightarrow{\mathcal A}_{\lpk}\subseteq\mathbb{Q}[[t*,x]]
\end{align*}
by
\begin{align*}
\Psi_{(\ides,\icomaj)}(\mathbf{G}_{\pi}) & \coloneqq\begin{cases}
{\displaystyle \frac{t^{\ides(\pi)+1}q^{\icomaj(\pi)}}{\prod_{i=0}^{n}(1-tq^{i})}x^{n}}, & \text{if }n\geq1,\\
1/(1-t), & \text{if }n=0,
\end{cases}\\
\Psi_{\ipk}(\mathbf{G}_{\pi}) & \coloneqq\begin{cases}
{\displaystyle \frac{2^{2\ipk(\pi)+1}t^{\ipk(\pi)+1}(1+t)^{n-2\ipk(\pi)-1}}{(1-t)^{n+1}}x^{n}}, & \text{if }n\geq1,\\
1/(1-t), & \text{if }n=0,\text{ and}
\end{cases}\\
\Psi_{\ilpk}(\mathbf{G}_{\pi}) & \coloneqq\frac{2^{2\ilpk(\pi)}t^{\ilpk(\pi)}(1+t)^{n-2\ilpk(\pi)}}{(1-t)^{n+1}}x^{n}
\end{align*}
and extending linearly.
\begin{thm}
The maps $\Psi_{(\ides,\icomaj)}$, $\Psi_{\ipk}$, and $\Psi_{\ilpk}$
are $\mathbb{Q}$-algebra homomorphisms.
\end{thm}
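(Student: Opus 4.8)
The plan is to deduce all three homomorphism claims from the single master result Theorem~\ref{t-gzmain} together with the factorization $\rho\colon\mathbf{FQSym}\to\mathrm{QSym}$ given by $\rho(\mathbf{G}_\pi)=F_{\Comp(\pi^{-1})}$. The key observation is that each map $\Psi_{\st'}$ (for $\st'\in\{(\ides,\icomaj),\ipk,\ilpk\}$) is of the form $\eta_{\st}\circ\phi_{\st}\circ\rho$, where $\st$ is the corresponding (non-inverse) shuffle-compatible descent statistic---$\des$ together with $\comaj$, or $\pk$, or $\lpk$---and $\eta_{\st}$ is a concrete realization of the shuffle algebra $\mathcal{A}_{\st}$ as a subalgebra of the appropriate Hadamard-product power series ring. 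Since $\rho$ is an algebra homomorphism (stated in the excerpt) and $\phi_{\st}$ is an algebra homomorphism by Theorem~\ref{t-gzmain}, it suffices to check that: (i) $\rho$ followed by $\phi_{\st}$ sends $\mathbf{G}_\pi$ to the basis element $u_\alpha$ indexed by the $\st$-equivalence class of $\Comp(\pi^{-1})$, and (ii) the stated closed forms define a valid algebra embedding $\mathcal{A}_{\st}\hookrightarrow\mathbb{Q}[[t*,q,x]]$ (resp.\ $\mathbb{Q}[[t*,x]]$) under which $u_\alpha$ maps to the displayed rational function in $t,q,x$.

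For step (i): by definition $\phi_{\st}(\rho(\mathbf{G}_\pi))=\phi_{\st}(F_{\Comp(\pi^{-1})})=u_\alpha$ where $\alpha$ is the $\st$-equivalence class of $\Comp(\pi^{-1})$. Now I must match the closed forms. The statistic $\st$ on the composition $\Comp(\pi^{-1})$ equals $\st$ evaluated on $\pi^{-1}$, which by the definition of inverse statistics is exactly $\ist(\pi)$; and the size $|\Comp(\pi^{-1})|=n$. So for the peak case, $u_\alpha$ depends only on $\pk(\pi^{-1})=\ipk(\pi)$ and $n$, matching the fact that the displayed formula for $\Psi_{\ipk}(\mathbf{G}_\pi)$ depends only on $\ipk(\pi)$ and $n$; similarly for $\ilpk$ and for the pair $(\ides,\icomaj)$ (here using $\icomaj(\pi)=\comaj(\pi^{-1})$ and $\ides(\pi)=\des(\pi^{-1})$). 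The remaining content is to verify that the specific rational functions written down are precisely the images of the $u_\alpha$ under known embeddings of $\mathcal{A}_{\des,\comaj}$, $\mathcal{A}_{\pk}$, and $\mathcal{A}_{\lpk}$---these embeddings and their explicit generating-function formulas are exactly the ones computed in Gessel--Zhuang \cite{Gessel2018}, so I would cite the relevant propositions there and note that $\phi_{\st}(F_L)$ has the stated form. The $n=0$ cases are handled separately and correspond to the identity element $1/(1-t)$ of the Hadamard-product ring, consistent with $\rho$ and $\phi_{\st}$ being unital.

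For step (ii), which I expect to be the routine-verification bottleneck rather than a genuine obstacle: one checks that summing the displayed formula over all $\pi$ with a fixed descent composition $L$ reproduces the known image $\phi_{\st}(F_L)\in\mathcal{A}_{\st}$, and that the multiplication in the target---Hadamard product in $t$ (and in $q$ for the $\comaj$ case via the $q$-binomial identity already invoked in the proof for $\Psi_q$), ordinary product in $x$---corresponds under $\eta_{\st}$ to multiplication in $\mathcal{A}_{\st}$. Concretely, since $\phi_{\st}$ is already a homomorphism and $\eta_{\st}$ is (by the structure theory in \cite{Gessel2018}) an algebra isomorphism onto its image, the composite $\eta_{\st}\circ\phi_{\st}\circ\rho$ is automatically a homomorphism; so the honest work is just confirming that the hand-written rational functions equal $\eta_{\st}(u_\alpha)$. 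The main thing to be careful about is the $\ilpk$ normalization: note the exponent on $t$ is $\ilpk(\pi)$, not $\ilpk(\pi)+1$, and there is no overall factor of $2$ out front as in the $\ipk$ case---this reflects that $\lpk$ counts peaks of $0\pi$ and that the generating function for $\mathcal{A}_{\lpk}$ has a slightly different shape than that for $\mathcal{A}_{\pk}$. Once these three formulas are matched against the corresponding results in \cite{Gessel2018}, the theorem follows.
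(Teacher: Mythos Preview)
Your proposal is correct and follows essentially the same approach as the paper: factor each $\Psi_{\ist}$ as $\phi_{\st}\circ\rho$ and conclude since compositions of homomorphisms are homomorphisms. The paper's proof is slightly leaner in that it cites Theorems~4.5, 4.8, and 4.10 of \cite{Gessel2018} for $\phi_{\st}$ already taking values in the concrete Hadamard-product power series ring with the displayed formulas, so your separate embedding $\eta_{\st}$ and the ``step~(ii)'' verification are absorbed into those citations; also, the target ring for $(\des,\comaj)$ uses ordinary multiplication in $q$, so no $q$-binomial identity is needed here.
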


\begin{proof}
By Theorems 4.5, 4.8, and 4.10 of \cite{Gessel2018}, the descent
statistics $(\des,\comaj)$, $\pk$, and $\lpk$ are all shuffle-compatible
and their homomorphisms
\begin{align*}
\phi_{(\des,\comaj)}\colon\mathrm{QSym} & \rightarrow{\mathcal A}_{(\comaj,\des)}\subseteq\mathbb{Q}[[t*,q,x]],\\
\phi_{\pk}\colon\mathrm{QSym} & \rightarrow{\mathcal A}_{\pk}\subseteq\mathbb{Q}[[t*,x]],\text{ and}\\
\phi_{\lpk}\colon\mathrm{QSym} & \rightarrow{\mathcal A}_{\lpk}\subseteq\mathbb{Q}[[t*,x]]
\end{align*}
(see Theorem \ref{t-gzmain}) are defined by
\begin{align*}
\phi_{(\des,\comaj)}(F_{L}) & \coloneqq\begin{cases}
{\displaystyle \frac{t^{\des(L)+1}q^{\comaj(L)}}{\prod_{i=0}^{n}(1-tq^{i})}x^{n}}, & \text{if }n\geq1,\\
1/(1-t), & \text{if }n=0,
\end{cases}\\
\phi_{\pk}(F_{L}) & \coloneqq\begin{cases}
{\displaystyle \frac{2^{2\pk(L)+1}t^{\pk(L)+1}(1+t)^{n-2\pk(L)-1}}{(1-t)^{n+1}}x^{n}}, & \text{if }n\geq1,\\
1/(1-t), & \text{if }n=0,\text{ and}
\end{cases}\\
\phi_{\lpk}(F_{L}) & \coloneqq\frac{2^{2\lpk(L)}t^{\lpk(L)}(1+t)^{n-2\lpk(L)}}{(1-t)^{n+1}}x^{n},
\end{align*}
where $L\vDash n$. The maps $\Psi_{(\ides,\icomaj)}$, $\Psi_{\ipk}$,
and $\Psi_{\ilpk}$ are simply the result of composing these homomorphisms
$\phi_{(\des,\comaj)}$, $\phi_{\pk}$, and $\phi_{\lpk}$ with the
canonical surjection $\rho$ from $\mathbf{FQSym}$ to $\mathrm{QSym}$\textemdash see
Equation (\ref{e-cansurj}). Since compositions of homomorphisms are
homomorphisms, the result follows.
\end{proof}

\subsection{Further specializations of the generalized cluster method}

We now use the homomorphisms defined in the previous section to produce
further specializations of our generalized cluster method which can
be used to relate the polynomials $A_{\Gamma,n}^{(\ides,\icomaj)}(s,t,q)$,
$A_{\Gamma,n}^{\ides}(s,t)$, $P_{\Gamma,n}^{\ipk}(s,t)$, and $P_{\Gamma,n}^{\ilpk}(s,t)$\textemdash defined
in Section 1.1\textemdash to ``refined cluster polynomials''.
These specializations are similar in spirit to Elizalde's $q$-cluster
method in that they count permutations by occurrences of prescribed
patterns but also keep track of additional statistics.

We begin with $(\ides,\icomaj)$. Given a set $\Gamma\subseteq\mathfrak{S}$,
let
\[
R_{\Gamma,k}^{(\ides,\icomaj)}(s,t,q)\coloneqq\sum_{\substack{\pi\in\mathfrak{S}_{k}}
}t^{\ides(\pi)+1}q^{\icomaj(\pi)}\sum_{c\in C_{\Gamma,\pi}}s^{\mk_{\Gamma}(c)},
\]
which counts $\Gamma$-clusters of length $k$ by the number of marked
occurrences as well as the inverse descent number and inverse comajor
index of the underlying permutation.
\begin{thm}
\label{t-gjcmidesicomaj} Let $\Gamma\subseteq\mathfrak{S}$ be a
set of permutations, each of length at least 2. Then
\begin{multline*}
\quad\sum_{n=0}^{\infty}\frac{A_{\Gamma,n}^{(\ides,\icomaj)}(s,t,q)}{\prod_{i=0}^{n}(1-tq^{i})}x^{n}\\
=\sum_{n=0}^{\infty}\left(\frac{tx}{(1-t)(1-tq)}+\sum_{k=2}^{\infty}R_{\Gamma,k}^{(\ides,\icomaj)}(s-1,t,q)\frac{x^{k}}{\prod_{i=0}^{k}(1-tq^{i})}\right)^{*\left\langle n\right\rangle }\quad.
\end{multline*}
\end{thm}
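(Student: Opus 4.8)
The plan is to apply the homomorphism $\Psi_{(\ides,\icomaj)}$ to the cluster method in $\mathbf{FQSym}$ (Theorem~\ref{t-gjcmfqsym}) and then extract the coefficient-wise Hadamard structure. First I would observe that $\Psi_{(\ides,\icomaj)}$, being a $\mathbb{Q}$-algebra homomorphism landing in $\mathbb{Q}[[t*,q,x]]$ where multiplication is the Hadamard product in $t$, sends the identity $\bar F_\Gamma(s) = (1 - \mathbf{G}_1 - \bar R_\Gamma(s-1))^{-1}$ to the identity
\[
\Psi_{(\ides,\icomaj)}(\bar F_\Gamma(s)) = \Bigl(\tfrac{1}{1-t} - \Psi_{(\ides,\icomaj)}(\mathbf{G}_1) - \Psi_{(\ides,\icomaj)}(\bar R_\Gamma(s-1))\Bigr)^{*\langle -1\rangle},
\]
where the inverse is now the Hadamard inverse in $t$ because $1/(1-t)$ is the identity of $\mathbb{Q}[[t*,q,x]]$. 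Expanding the Hadamard inverse as a geometric series $\sum_{n=0}^\infty G^{*\langle n\rangle}$ with $G = \Psi_{(\ides,\icomaj)}(\mathbf{G}_1) + \Psi_{(\ides,\icomaj)}(\bar R_\Gamma(s-1))$ will produce the right-hand side of the claimed formula, once the two pieces of $G$ are identified.

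Next I would compute the left-hand side and the two pieces of $G$ explicitly. Applying $\Psi_{(\ides,\icomaj)}$ term-by-term to $\bar F_\Gamma(s) = \sum_{\pi} \mathbf{G}_\pi s^{\occ_\Gamma(\pi)}$ and grouping by length $n$ gives $\sum_{n=0}^\infty \bigl(\sum_{\pi\in\mathfrak{S}_n} s^{\occ_\Gamma(\pi)} t^{\ides(\pi)+1} q^{\icomaj(\pi)}\bigr) x^n / \prod_{i=0}^n (1-tq^i)$, which is exactly $\sum_{n} A_{\Gamma,n}^{(\ides,\icomaj)}(s,t,q)\, x^n / \prod_{i=0}^n(1-tq^i)$ — this handles the LHS, using the $n=0$ value $1/(1-t)$ consistent with the convention $A_{\Gamma,0}^{(\ides,\icomaj)} = 1$. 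For the single letter, $\Psi_{(\ides,\icomaj)}(\mathbf{G}_1) = t^{\ides(1)+1} q^{\icomaj(1)} x / \prod_{i=0}^1(1-tq^i) = tx / \bigl((1-t)(1-tq)\bigr)$, since the length-$1$ permutation has $\ides = 0$ and $\icomaj = 0$. For the cluster term, applying $\Psi_{(\ides,\icomaj)}$ to $\bar R_\Gamma(s-1) = \sum_\pi \mathbf{G}_\pi \sum_{c\in C_{\Gamma,\pi}}(s-1)^{\mk_\Gamma(c)}$ and grouping by length $k$ yields $\sum_{k\geq 2} R_{\Gamma,k}^{(\ides,\icomaj)}(s-1,t,q)\, x^k / \prod_{i=0}^k(1-tq^i)$, where the sum starts at $k=2$ because every pattern in $\Gamma$ has length at least $2$, so a $\Gamma$-cluster must have at least one marked occurrence of length $\geq 2$ and hence length $\geq 2$ itself (length-$0$ and length-$1$ permutations admit no nonempty clusters). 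Assembling $G$ and substituting into $\sum_n G^{*\langle n\rangle}$ gives the claimed right-hand side.

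The main obstacle I anticipate is a careful justification of two interacting bookkeeping points. The first is that $\Psi_{(\ides,\icomaj)}$ genuinely respects the multiplication structure in the way needed — i.e., that applying a homomorphism into an algebra whose product is the Hadamard product in $t$ turns the ordinary inverse $(\,\cdot\,)^{-1}$ in $\mathbf{FQSym}$ into the Hadamard inverse $(\,\cdot\,)^{*\langle -1\rangle}$, and that the geometric series expansion converges in the relevant completed/graded sense. This is really just the statement that a homomorphism sends $(1-u)^{-1}$ to $(1-\phi(u))^{-1}$ computed in the target algebra, applied with $u = \mathbf{G}_1 + \bar R_\Gamma(s-1)$, but it requires noting that $\mathbf{G}_1 + \bar R_\Gamma(s-1)$ has no constant term (lowest degree $1$ in $x$), so the expansion is well-defined degree-by-degree, and that $\Psi_{(\ides,\icomaj)}$ is degree-preserving in $x$ so the same holds in the target. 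The second point is the clean separation of the $n=0$ cases: the piecewise definition of $\Psi_{(\ides,\icomaj)}$ on $\mathbf{G}_\pi$ for $\pi$ the empty permutation sends it to $1/(1-t)$, which must be reconciled with treating it as the multiplicative identity throughout — and likewise one must check the summation index on the cluster term really does begin at $k=2$. Once these are pinned down, the remainder is the routine term-by-term computation sketched above.
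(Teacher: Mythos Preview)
Your proposal is correct and follows essentially the same approach as the paper's own proof: apply $\Psi_{(\ides,\icomaj)}$ to Theorem~\ref{t-gjcmfqsym}, compute the images of $\bar F_\Gamma(s)$, $\mathbf{G}_1$, and $\bar R_\Gamma(s-1)$ term-by-term, and then expand the resulting Hadamard inverse as a geometric series. The bookkeeping concerns you raise are handled in the paper exactly as you anticipate, so there is no gap.
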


We note that this formula lives inside the formal power series algebra
$\mathbb{Q}[[s,t*,q,x]]$, although the Hadamard product is only present
on the right-hand side.
\begin{proof}
Take Equation (\ref{e-fqsymgjcm}) from Theorem \ref{t-gjcmfqsym},
and then apply the homomorphism $\Psi_{(\ides,\icomaj)}$ to both sides. Observe that {\allowdisplaybreaks 
\begin{align*}
\Psi_{(\ides,\icomaj)}(\bar{F}_{\Gamma}(s)) & =\sum_{\pi\in\mathfrak{S}_{n}}\Psi_{(\ides,\icomaj)}(\mathbf{G}_{\pi})s^{\occ_{\Gamma}(\pi)}\\
 & =\frac{1}{1-t}+\sum_{n=1}^{\infty}\sum_{\pi\in\mathfrak{S}_{n}}\frac{s^{\occ_{\Gamma}(\pi)}t^{\ides(\pi)+1}q^{\icomaj(\pi)}}{\prod_{i=0}^{n}(1-tq^{i})}x^{n}\\
 & =\sum_{n=0}^{\infty}\sum_{\pi\in\mathfrak{S}_{n}}\frac{A_{\Gamma,n}^{(\ides,\icomaj)}(s,t,q)}{\prod_{i=0}^{n}(1-tq^{i})}x^{n}
\end{align*}
and 
\begin{align*}
\Psi_{(\ides,\icomaj)}(\bar{R}_{\Gamma}(s-1)) & =\sum_{\pi\in\mathfrak{S}}\Psi_{(\ides,\icomaj)}(\mathbf{G}_{\pi})\sum_{c\in C_{\Gamma,\pi}}(s-1)^{\mk_{\Gamma}(c)}\\
 & =\sum_{k=2}^{\infty}\sum_{\pi\in\mathfrak{S}_{k}}\frac{t^{\ides(\pi)+1}q^{\icomaj(\pi)}x^{k}}{\prod_{i=0}^{k}(1-tq^{i})}\sum_{c\in C_{\Gamma,\pi}}(s-1)^{\mk_{\Gamma}(c)}\\
 & =\sum_{k=2}^{\infty}R_{\Gamma,k}^{(\ides,\icomaj)}(s-1,t,q)\frac{x^{k}}{\prod_{i=0}^{k}(1-tq^{i})},
\end{align*}
}and also $\Psi_{(\ides,\icomaj)}(1)=1/(1-t)$ and $\Psi_{(\ides,\icomaj)}(\mathbf{G}_{1})=tx/((1-t)(1-tq))$.
Thus, we have 
\begin{align*}
 & \sum_{n=0}^{\infty}\sum_{\pi\in\mathfrak{S}_{n}}\frac{A_{\Gamma,n}^{(\ides,\icomaj)}(s,t,q)}{\prod_{i=0}^{n}(1-tq^{i})}x^{n}\\
 & \qquad=\left(\frac{1}{1-t}-\frac{tx}{(1-t)(1-tq)}-\sum_{k=2}^{\infty}R_{\Gamma,k}^{(\ides,\icomaj)}(s-1,t,q)\frac{x^{k}}{\prod_{i=0}^{k}(1-tq^{i})}\right)^{*\left\langle -1\right\rangle }\\
 & \qquad=\sum_{n=0}^{\infty}\left(\frac{tx}{(1-t)(1-tq)}+\sum_{k=2}^{\infty}R_{\Gamma,k}^{(\ides,\icomaj)}(s-1,t,q)\frac{x^{k}}{\prod_{i=0}^{k}(1-tq^{i})}\right)^{*\left\langle n\right\rangle }.\qedhere
\end{align*}
\end{proof}
Let us give two remarks before proceeding. First, recall the identity
\[
\comaj(\pi)=n\des(\pi)-\maj(\pi),
\]
which is equivalent to 
\[
\text{\ensuremath{\imaj}}(\pi)=n\text{\ensuremath{\ides}}(\pi)-\text{\ensuremath{\icomaj}}(\pi).
\]
It follows that 
\[
A_{\Gamma,n}^{(\ides,\imaj)}(s,t,q)=q^{-n}A_{\Gamma,n}^{(\ides,\icomaj)}(s,tq^{n},q^{-1}),
\]
so we can compute the polynomials $A_{\Gamma,n}^{(\ides,\imaj)}(s,t,q)$
from the $A_{\Gamma,n}^{(\ides,\icomaj)}(s,t,q)$. In other words,
having a formula for the polynomials $A_{\Gamma,n}^{(\ides,\icomaj)}(s,t,q)$
is equivalent to having one for the $A_{\Gamma,n}^{(\ides,\imaj)}(s,t,q)$.

Furthermore, in using Theorem \ref{t-gjcmidesicomaj} to compute the
polynomial $A_{\Gamma,j}^{(\ides,\icomaj)}(s,t,q)$, one only needs
to sum from $n=0$ to $n=j$ on the right-hand side. This is because,
by the definition of Hadamard product in $t$, the coefficient of
$x^{j}$ in 
\[
\left(\frac{tx}{(1-t)(1-tq)}+\sum_{k=2}^{\infty}R_{\Gamma,k}^{(\ides,\icomaj)}(s-1,t,q)\frac{x^{k}}{\prod_{i=0}^{k}(1-tq^{i})}\right)^{*\left\langle n\right\rangle }
\]
is zero unless $n\leq j$. The same is true for the other formulas
in this section.

We now specialize Theorem \ref{t-gjcmidesicomaj} to an analogous
result solely for the inverse descent number. Let 
\[
R_{\Gamma,k}^{\ides}(s,t)\coloneqq\sum_{\substack{\pi\in\mathfrak{S}_{k}}
}t^{\ides(\pi)+1}\sum_{c\in C_{\Gamma,\pi}}s^{\mk_{\Gamma}(c)}
\]
be the refined cluster polynomial for $\ides$.
\begin{thm}
\label{t-gjcmides} Let $\Gamma\subseteq\mathfrak{S}$ be a set of
permutations, each of length at least 2. Then
\begin{align*}
\sum_{n=0}^{\infty}\frac{A_{\Gamma,n}^{\ides}(s,t)}{(1-t)^{n+1}}x^{n} & =\sum_{n=0}^{\infty}\left(\frac{tx}{(1-t)^{2}}+\frac{1}{1-t}\sum_{k=2}^{\infty}R_{\Gamma,k}^{\ides}(s-1,t)z^{k}\right)^{*\left\langle n\right\rangle }
\end{align*}
where $z=x/(1-t)$.
\end{thm}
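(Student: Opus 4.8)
The plan is to derive Theorem \ref{t-gjcmides} from Theorem \ref{t-gjcmidesicomaj} by setting $q=1$, rather than reproving it from scratch via the homomorphism $\Psi_{\ides}$. Observe that $A_{\Gamma,n}^{\ides}(s,t) = A_{\Gamma,n}^{(\ides,\icomaj)}(s,t,1)$ and $R_{\Gamma,k}^{\ides}(s,t) = R_{\Gamma,k}^{(\ides,\icomaj)}(s,t,1)$, since setting $q=1$ in $q^{\icomaj(\pi)}$ simply removes that factor. So first I would write down the statement of Theorem \ref{t-gjcmidesicomaj} and substitute $q=1$ throughout. On the left-hand side, $\prod_{i=0}^{n}(1-tq^{i})$ becomes $(1-t)^{n+1}$, giving exactly $\sum_{n=0}^{\infty} A_{\Gamma,n}^{\ides}(s,t)/(1-t)^{n+1}\,x^{n}$.

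Next I would handle the right-hand side. At $q=1$, the term $tx/((1-t)(1-tq))$ becomes $tx/(1-t)^{2}$, and inside the sum, $x^{k}/\prod_{i=0}^{k}(1-tq^{i})$ becomes $x^{k}/(1-t)^{k+1} = (1/(1-t))\cdot z^{k}$ where $z = x/(1-t)$. Thus the quantity being raised to the $n$-fold Hadamard power becomes
\[
\frac{tx}{(1-t)^{2}} + \frac{1}{1-t}\sum_{k=2}^{\infty} R_{\Gamma,k}^{\ides}(s-1,t)\,z^{k},
\]
which matches the claimed right-hand side. The only subtlety worth a sentence of justification is that the specialization $q=1$ is a legitimate operation on these formal power series: the Hadamard product is taken in the variable $t$, and in $\mathbb{Q}[[s,t*,q,x]]$ each coefficient (of a monomial in $s$, $t$, $x$) is a polynomial in $q$, so evaluation at $q=1$ is well-defined and commutes with the algebraic operations (sum, Hadamard product, Hadamard inverse) appearing in the identity — one checks that the Hadamard inverse specializes correctly because $tx/((1-t)(1-tq))$ has no constant term in $x$, so the element being inverted is $1/(1-t)$ plus higher-order terms in $x$ both before and after setting $q=1$.

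Alternatively — and this is the cleaner route if one prefers a self-contained argument — I would apply the homomorphism $\Psi_{\ides}$ directly to Theorem \ref{t-gjcmfqsym}, exactly mirroring the proof of Theorem \ref{t-gjcmidesicomaj}. Here $\Psi_{\ides}(\mathbf{G}_{\pi}) = t^{\ides(\pi)+1}x^{n}/(1-t)^{n+1}$ for $n\geq 1$ and $1/(1-t)$ for $n=0$ (this is $\phi_{\des}\circ\rho$, valid since $\des$ is a shuffle-compatible descent statistic with shuffle algebra inside $\mathbb{Q}[[t*,x]]$). Then $\Psi_{\ides}(\bar{F}_{\Gamma}(s)) = \sum_{n} A_{\Gamma,n}^{\ides}(s,t)x^{n}/(1-t)^{n+1}$, $\Psi_{\ides}(\mathbf{G}_{1}) = tx/(1-t)^{2}$, $\Psi_{\ides}(1) = 1/(1-t)$, and $\Psi_{\ides}(\bar{R}_{\Gamma}(s-1)) = \sum_{k\geq 2} R_{\Gamma,k}^{\ides}(s-1,t)\,x^{k}/(1-t)^{k+1}$; rewriting $x^{k}/(1-t)^{k+1}$ as $z^{k}/(1-t)$ and taking the Hadamard inverse (equivalently, geometric series) in $\mathbb{Q}[[s,t*,x]]$ yields the stated formula. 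I expect no real obstacle here: the main point requiring care is simply bookkeeping with the substitution $z = x/(1-t)$ and confirming that the Hadamard-inverse manipulation $(\,1/(1-t) - g\,)^{*\langle -1\rangle} = \sum_{n\geq 0} g^{*\langle n\rangle}$ applies, which it does because $g$ has positive order in $x$.
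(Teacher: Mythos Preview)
Your proposal is correct and matches the paper's approach exactly: the paper's proof is the single sentence ``This follows immediately from setting $q=1$ in Theorem \ref{t-gjcmidesicomaj} and simplifying.'' Your alternative route via a direct application of a $\Psi_{\ides}$ homomorphism is also valid and is precisely the pattern used for the $(\ides,\icomaj)$, $\ipk$, and $\ilpk$ cases, but the paper opts for the shortcut you describe first.
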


\begin{proof}
This follows immediately from setting $q=1$ in Theorem \ref{t-gjcmidesicomaj}
and simplifying.
\end{proof}
For the inverse peak statistics $\ipk$ and $\ilpk$, let us define
\[
R_{\Gamma,k}^{\ipk}(s,t)\coloneqq\sum_{\substack{\pi\in\mathfrak{S}_{k}}
}t^{\ipk(\pi)+1}\sum_{c\in C_{\Gamma,\pi}}s^{\mk_{\Gamma}(c)}\;\: \text{and}\;\: R_{\Gamma,k}^{\ilpk}(s,t)\coloneqq\sum_{\substack{\pi\in\mathfrak{S}_{k}}
}t^{\ilpk(\pi)}\sum_{c\in C_{\Gamma,\pi}}s^{\mk_{\Gamma}(c)}.
\]
Then the following two theorems can be proven in the same way as Theorem
\ref{t-gjcmidesicomaj}, but
using the homomorphisms $\Psi_{\ipk}$ and $\Psi_{\ilpk}$. We outline
the steps for Theorem \ref{t-gjcmipk} but omit the proof of Theorem
\ref{t-gjcmilpk}.
\begin{thm}
\label{t-gjcmipk} Let $\Gamma\subseteq\mathfrak{S}$ be a set of
permutations, each of length at least 2. Then
\begin{equation*}
\quad\frac{1}{1-t}+\frac{1+t}{2(1-t)}\sum_{n=1}^{\infty}P_{\Gamma,n}^{\ipk}(s,u)z^{n}
=\sum_{n=0}^{\infty}\left(\frac{2tx}{(1-t)^{2}}+\frac{1+t}{2(1-t)}\sum_{k=2}^{\infty}R_{\Gamma,k}^{\ipk}(s-1,u)z^{k}\right)^{*\left\langle n\right\rangle }\quad
\end{equation*}
where $u=4t/(1+t)^{2}$ and $z=(1+t)x/(1-t)$.
\end{thm}

\begin{proof}
We shall apply $\Psi_{\ipk}$ to both sides of (\ref{e-fqsymgjcm}). Observe that {\allowdisplaybreaks
\begin{align*}
\Psi_{\ipk}(\bar{F}_{\Gamma}(s)) & =\frac{1}{1-t}+\sum_{n=1}^{\infty}\sum_{\pi\in\mathfrak{S}_{n}}\frac{2^{2\ipk(\pi)+1}t^{\ipk(\pi)+1}(1+t)^{n-2\ipk(\pi)-1}s^{\occ_{\Gamma}(\pi)}}{(1-t)^{n+1}}x^{n}\\
 & =\frac{1}{1-t}+\frac{1}{2}\sum_{n=1}^{\infty}\left(\frac{1+t}{1-t}\right)^{n+1}P_{\Gamma,n}^{\ipk}\hspace{-2pt}\left(s,\frac{4t}{(1+t)^{2}}\right)x^{n}\\
 & =\frac{1}{1-t}+\frac{1+t}{2(1-t)}\sum_{n=1}^{\infty}P_{\Gamma,n}^{\ipk}(s,u)z^{n}
\end{align*}
and
\begin{align*}
\Psi_{\ipk}(\bar{R}_{\Gamma}(s-1))
 &= \sum_{k=2}^{\infty}\sum_{\pi\in\mathfrak{S}_{k}}\frac{2^{2\ipk(\pi)+1}t^{\ipk(\pi)+1}(1+t)^{k-2\ipk(\pi)-1}x^{k}}{(1-t)^{k+1}}\sum_{c\in C_{\Gamma,\pi}}(s-1)^{\mk_{\Gamma}(c)}\\
 &=\sum_{k=2}^{\infty}R_{\Gamma,k}^{\ipk}\hspace{-2pt}\left(s-1,\frac{4t}{(1+t)^{2}}\right)\frac{(1+t)^{k+1}x^{k}}{2(1-t)^{k+1}}\\
 &=\frac{1+t}{2(1-t)}\sum_{k=2}^{\infty}R_{\Gamma,k}^{\ipk}(s-1,u)z^{k}.
\end{align*}
}Also, we have $\Psi_{\ipk}(1)=1/(1-t)$ and $\Psi_{\ipk}(\mathbf{G}_{1})=2tx/(1-t)^{2}$.
Hence, we obtain
\begin{align*}
 \frac{1}{1-t}+\frac{1+t}{2(1-t)}\sum_{n=1}^{\infty}P_{\Gamma,n}^{\ipk}(s,u)z^{n}
 &=\left(\frac{1}{1-t}-\frac{2tx}{(1-t)^{2}}-\frac{1+t}{2(1-t)}\sum_{k=2}^{\infty}R_{\Gamma,k}^{\ipk}(s-1,u)z^{k}\right)^{*\left\langle -1\right\rangle }\\
 &=\sum_{n=0}^{\infty}\left(\frac{2tx}{(1-t)^{2}}+\frac{1+t}{2(1-t)}\sum_{k=2}^{\infty}R_{\Gamma,k}^{\ipk}(s-1,u)z^{k}\right)^{*\left\langle n\right\rangle }.\qedhere
\end{align*}
\end{proof}
\begin{thm}
\label{t-gjcmilpk} Let $\Gamma\subseteq\mathfrak{S}$ be a set of
permutations, each of length at least 2. Then
\begin{align*}
\frac{1}{1-t}\sum_{n=0}^{\infty}P_{\Gamma,n}^{\ilpk}(s,u)z^{n} & =\sum_{n=0}^{\infty}\left(\frac{z}{1-t}+\frac{1}{1-t}\sum_{k=2}^{\infty}R_{\Gamma,k}^{\ilpk}(s-1,u)z^{k}\right)^{*\left\langle n\right\rangle }
\end{align*}
where $u=4t/(1+t)^{2}$ and $z=(1+t)x/(1-t)$.
\end{thm}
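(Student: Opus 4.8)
The plan is to follow verbatim the template established in the proof of Theorem \ref{t-gjcmipk}, simply replacing the homomorphism $\Psi_{\ipk}$ by $\Psi_{\ilpk}$. First I would compute $\Psi_{\ilpk}(\bar{F}_{\Gamma}(s))$ by applying $\Psi_{\ilpk}$ termwise to the definition $\bar{F}_{\Gamma}(s) = \sum_{\pi\in\mathfrak{S}}\mathbf{G}_{\pi}s^{\occ_{\Gamma}(\pi)}$. Since $\Psi_{\ilpk}(\mathbf{G}_{\pi}) = 2^{2\ilpk(\pi)}t^{\ilpk(\pi)}(1+t)^{n-2\ilpk(\pi)}x^{n}/(1-t)^{n+1}$ uniformly for all $n\geq 0$ (there is no separate $n=0$ case here, which is a minor simplification relative to the $\ipk$ proof), grouping the sum over $\pi\in\mathfrak{S}_n$ and pulling out a factor $(1+t)^{n}/(1-t)^{n+1}$ gives $\sum_{n=0}^{\infty}(1+t)^{n}(1-t)^{-n-1}P_{\Gamma,n}^{\ilpk}(s, 4t/(1+t)^2)x^{n}$. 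Substituting $u = 4t/(1+t)^2$ and $z = (1+t)x/(1-t)$ collapses this to $\frac{1}{1-t}\sum_{n=0}^{\infty}P_{\Gamma,n}^{\ilpk}(s,u)z^{n}$, which is the left-hand side of the claimed identity.

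Next I would compute $\Psi_{\ilpk}(\bar{R}_{\Gamma}(s-1))$ in the same fashion, starting from $\bar{R}_{\Gamma}(s-1) = \sum_{\pi\in\mathfrak{S}}\mathbf{G}_{\pi}\sum_{c\in C_{\Gamma,\pi}}(s-1)^{\mk_{\Gamma}(c)}$. Because every pattern in $\Gamma$ has length at least $2$, a cluster has length at least $2$, so $C_{\Gamma,\pi}=\varnothing$ for $\pi\in\mathfrak{S}_0\cup\mathfrak{S}_1$ and the sum effectively starts at $k=2$. Applying $\Psi_{\ilpk}$ to $\mathbf{G}_{\pi}$ for $\pi\in\mathfrak{S}_k$ and grouping by the value of $\ilpk$ exactly as in the definition of $R_{\Gamma,k}^{\ilpk}$, then pulling out $(1+t)^{k}(1-t)^{-k-1}$ and rewriting via $u$ and $z$, yields $\Psi_{\ilpk}(\bar{R}_{\Gamma}(s-1)) = \frac{1}{1-t}\sum_{k=2}^{\infty}R_{\Gamma,k}^{\ilpk}(s-1,u)z^{k}$. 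I would also record the two special values $\Psi_{\ilpk}(1) = \Psi_{\ilpk}(\mathbf{G}_{\varnothing}) = 1/(1-t)$ (from $n=0$, $\ilpk=0$) and $\Psi_{\ilpk}(\mathbf{G}_{1}) = \frac{(1+t)^0}{(1-t)^2}x = \frac{x}{(1-t)^2}$, the latter simplifying to $\frac{z}{(1+t)(1-t)}$; in terms of the normalization used on the right-hand side this contributes the $\frac{z}{1-t}$ term after one factors appropriately — here I would double-check the bookkeeping, since this is the one place where the $\ilpk$ normalization differs slightly from the $\ipk$ one (no shift by $1$ in the exponent of $t$).

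Finally, I would apply the homomorphism $\Psi_{\ilpk}$ to the identity $\bar{F}_{\Gamma}(s) = (1 - \mathbf{G}_{1} - \bar{R}_{\Gamma}(s-1))^{-1}$ of Theorem \ref{t-gjcmfqsym}. Since $\Psi_{\ilpk}$ is a $\mathbb{Q}$-algebra homomorphism into an algebra whose multiplication is the Hadamard product in $t$ (and ordinary in $x$), with identity $1/(1-t)$, the image of an inverse is the Hadamard-inverse of the image, so $\frac{1}{1-t}\sum_{n}P_{\Gamma,n}^{\ilpk}(s,u)z^{n} = \big(\Psi_{\ilpk}(1) - \Psi_{\ilpk}(\mathbf{G}_1) - \Psi_{\ilpk}(\bar{R}_{\Gamma}(s-1))\big)^{*\langle -1\rangle}$. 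Expanding the Hadamard-inverse as a geometric series in the $*$ sense, exactly as in the displayed computation at the end of the proof of Theorem \ref{t-gjcmipk}, gives $\sum_{n=0}^{\infty}\big(\frac{z}{1-t} + \frac{1}{1-t}\sum_{k=2}^{\infty}R_{\Gamma,k}^{\ilpk}(s-1,u)z^{k}\big)^{*\langle n\rangle}$, which is the claimed right-hand side. There is essentially no obstacle: the argument is a routine diagram-chase through the homomorphism, and the only place demanding care is verifying that the $\mathbf{G}_1$ term produces precisely $z/(1-t)$ under the chosen normalization — i.e. that the change of variables $u = 4t/(1+t)^2$, $z = (1+t)x/(1-t)$ is consistent between the $\bar{F}$ side and the $\bar{R}$ side. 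Since the proof is entirely parallel to that of Theorem \ref{t-gjcmipk}, it is reasonable to present only this outline and omit the repetitive details, as the paper states.
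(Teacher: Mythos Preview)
Your approach is correct and is exactly what the paper intends: apply $\Psi_{\ilpk}$ to Theorem~\ref{t-gjcmfqsym} termwise, just as in the proof of Theorem~\ref{t-gjcmipk}. One small arithmetic slip to fix: for $\pi=1\in\mathfrak{S}_1$ we have $\ilpk(1)=0$, so the exponent of $(1+t)$ is $n-2\ilpk(\pi)=1$, not $0$; hence $\Psi_{\ilpk}(\mathbf{G}_1)=(1+t)x/(1-t)^2=z/(1-t)$ directly, which is the term you want (your intermediate expression $z/((1+t)(1-t))$ is off, though you land on the right answer).
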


\section{Monotone patterns \texorpdfstring{$12\cdots m$}{12...m} and \texorpdfstring{$m\cdots 21$}{m...21}}

\subsection{Cluster generating functions for monotone patterns}

In this section, we will study the polynomials $A_{\sigma,n}^{(\ides,\imaj)}(t,q)$,
$A_{\sigma,n}^{\ides}(s,t,q)$, $A_{\sigma,n}^{\ides}(t,q)$, $P_{\sigma,n}^{\ipk}(s,t)$,
$P_{\sigma,n}^{\ipk}(t)$, $P_{\sigma,n}^{\ilpk}(s,t)$, and $P_{\sigma,n}^{\ilpk}(t)$
for $\sigma=12\cdots m$ and $\sigma=m\cdots21$. Our formulas will
mostly be for the pattern $\sigma=12\cdots m$, but we can use these
formulas along with Proposition \ref{p-rcpoly} to compute most of
these polynomials for $\sigma=m\cdots21$ as well. 

We begin with a lemma establishing closed-form generating functions
for refined $12\cdots m$-cluster polynomials, which we need in order
to apply our results from Section 3.5. Note that, in general, there
is no straightforward way to count clusters by inverse statistics.
As a matter of fact, the simpler problem of counting clusters (without
keeping track of any statistic) is equivalent to counting linear extensions
of a certain poset \cite{Elizalde2012}, which is itself difficult
in general. Yet, counting $\sigma$-clusters by our inverse statistics
is essentially trivial when $\sigma$ is a monotone pattern.
\begin{lem}
\label{l-incR} For all $m\geq2$, we have 
\begin{align*}
\sum_{k=2}^{\infty}R_{12\cdots m,k}^{(\ides,\icomaj)}(s,t,q)x^{k} =\sum_{k=2}^{\infty}R_{12\cdots m,k}^{\ides}(s,t)x^{k}
 =\sum_{k=2}^{\infty}R_{12\cdots m,k}^{\ipk}(s,t)x^{k}
 =\frac{stx^{m}}{1-s\sum_{l=1}^{m-1}x^{l}}
\end{align*}
and 
\[
\sum_{k=2}^{\infty}R_{12\cdots m,k}^{\ilpk}(s,t)x^{k}=\frac{sx^{m}}{1-s\sum_{l=1}^{m-1}x^{l}}.
\]
\end{lem}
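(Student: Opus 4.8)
The plan is to first understand the combinatorial structure of $12\cdots m$-clusters. A $12\cdots m$-cluster of length $k$ is a permutation $\pi \in \mathfrak{S}_k$ together with a set of marked occurrences of the pattern $12\cdots m$ such that the whole thing is not a concatenation of two nonempty marked permutations. For monotone patterns, an occurrence of $12\cdots m$ at position $i$ just means $\pi_i < \pi_{i+1} < \cdots < \pi_{i+m-1}$, i.e., $[i, i+m-1]$ is contained in an increasing run. The key observation (going back to Elizalde--Noy) is that for $\sigma = 12\cdots m$, a cluster must be the \emph{increasing permutation} $12\cdots k$ itself: if two consecutive marked occurrences overlap in fewer than $m-1$ positions the marked word splits, and once one knows all letters lie in increasing runs glued together by marks, the only permutation of $[k]$ realizing this is $12\cdots k$. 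So for each length $k \geq m$ there is exactly one underlying permutation, namely $12\cdots k$, and $\ides(12\cdots k) = 0$, $\icomaj(12\cdots k) = 0$, $\ipk(12\cdots k)=0$, $\ilpk(12\cdots k) = 0$. This collapses all four generating functions (up to the leading $t$ factor) to the \emph{same} sum $\sum_k (\sum_{c} s^{\mk(c)}) x^k$, where $c$ ranges over the mark-sets making $(12\cdots k, T)$ a cluster.

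Next I would count, for each $k$, the polynomial $\sum_{c \in C_{12\cdots m, \,12\cdots k}} s^{\mk(c)}$ in $s$. Having fixed the underlying permutation to be $12\cdots k$, a valid mark-set is a collection of intervals $[i, i+m-1] \subseteq [k]$ (each of length exactly $m$) such that the ``cluster condition'' holds: the marked intervals must cover $[k]$ with consecutive marks overlapping (otherwise the marked permutation factors as a concatenation). This is exactly the classical combinatorial problem from the Goulden--Jackson word setting of covering $[k]$ by a linearly ordered chain of length-$m$ windows where each new window overlaps the previous one in between $1$ and $m-1$ positions. The standard bookkeeping: if there are $j$ marks, the first occupies positions $1,\dots,m$, and each subsequent mark shifts forward by some amount $\ell \in \{1, \dots, m-1\}$, so $k = m + (\text{sum of }j-1\text{ shifts})$. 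Therefore $\sum_{k \geq m} \big(\sum_{c} s^{\mk(c)}\big) x^k = \sum_{j \geq 1} s^j x^m \big(\sum_{\ell=1}^{m-1} x^\ell\big)^{j-1} = \dfrac{s x^m}{1 - s\sum_{\ell=1}^{m-1} x^\ell}$.

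Combining the two steps, for the $\ides$, $(\ides,\icomaj)$, and $\ipk$ versions each term carries an extra factor $t^{0+1} = t$ (from $t^{\ides(\pi)+1}$, $t^{\ipk(\pi)+1}$, or $t^{\ides(\pi)+1}q^{\icomaj(\pi)}$ with the statistics all zero and $q^0=1$), giving $\dfrac{st x^m}{1 - s\sum_{\ell=1}^{m-1} x^\ell}$; for the $\ilpk$ version the factor is $t^{\ilpk(\pi)} = t^0 = 1$, giving $\dfrac{s x^m}{1 - s\sum_{\ell=1}^{m-1} x^\ell}$. This matches the claimed formulas. The main obstacle, and the only genuinely nonroutine step, is rigorously justifying that the underlying permutation of every $12\cdots m$-cluster must be $12\cdots k$ --- i.e., that the cluster condition plus marking by monotone windows forces the identity permutation. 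I expect the paper handles this via the overlap-set language ($O_{12\cdots m} = \{1, 2, \dots, m-1\}$) introduced just before Section 2.3, characterizing clusters in terms of chains of overlapping occurrences, and one argues each letter's value is pinned down by the increasing constraints transmitted through the overlapping marked windows covering $[k]$. Once that structural fact is in hand, everything else is the routine geometric-series computation above.
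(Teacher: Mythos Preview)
Your proposal is correct and follows essentially the same approach as the paper: both identify that the underlying permutation of any $12\cdots m$-cluster is the identity $12\cdots k$ (the paper states this via the overlap set $O_{12\cdots m}=\{1,\dots,m-1\}$ exactly as you anticipated), deduce that all four inverse statistics vanish, and then count mark-sets by the same geometric-series argument---your ``shift by $\ell\in\{1,\dots,m-1\}$'' is precisely the paper's ``append the next $l$ largest integers.''
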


\begin{proof}
It is easy to see that there exists a $12\cdots m$-cluster on $\pi$
if and only if $\pi$ is itself of the form $12\cdots n$ where $n\geq m$,
and that the overlap set of $12\cdots m$ is given by $O_{12\cdots m}=\{1,2,\dots,m-1\}$.
Hence, we can uniquely generate $12\cdots m$-clusters by first taking
the permutation $12\cdots m$, and then repeatedly appending the next
$l$ largest integers (for any $1\leq l\leq m-1$) in increasing order\textemdash each
iteration creates an additional marked occurrence of $12\cdots m$.
Figure 1 provides an illustration for the case $m=4$. Thus, we have
the formula 
\begin{align}
\sum_{k=2}^{\infty}\sum_{\pi\in\mathfrak{S}_{k}}\sum_{c\in C_{12\cdots m,\pi}}s^{\mk_{12\cdots m}(c)}x^{k} & =\frac{sx^{m}}{1-s(x+x^{2}+\cdots+x^{m-1})}\nonumber \\
 & =\frac{sx^{m}}{1-s\sum_{l=1}^{m-1}x^{l}}\label{e-rcpinc}
\end{align}
(see also \cite[p. 356]{Elizalde2012}). 
\begin{figure}
\begin{center}
\begin{tikzpicture}


\node at (0,0){
\begin{minipage}{\textwidth}
\begin{alignat*}{3} 
1\;2\;3\;4\quad\mapsto\quad & 1\;2\;3\;4\;5\;6\;7 & \quad\mapsto\quad & 1\;2\;3\;4\;5\;6\;7\;8\;9\;10 & \quad\mapsto\cdots\\[10pt]  &  & \quad\mapsto\quad & 1\;2\;3\;4\;5\;6\;7\;8\;9 & \quad\mapsto\cdots\\[10pt]  &  & \quad\mapsto\quad & 1\;2\;3\;4\;5\;6\;7\;8 & \quad\mapsto\cdots\\[10pt] \quad\mapsto\quad & 1\;2\;3\;4\;5\;6 & \quad\mapsto\quad & 1\;2\;3\;4\;5\;6\;7\;8\;9 & \quad\mapsto\cdots\\[10pt]  &  & \quad\mapsto\quad & 1\;2\;3\;4\;5\;6\;7\;8 & \quad\mapsto\cdots\\[10pt]  &  & \quad\mapsto\quad & 1\;2\;3\;4\;5\;6\;7 & \quad\mapsto\cdots\\[10pt] \quad\mapsto\quad & 1\;2\;3\;4\;5 & \quad\mapsto\quad & 1\;2\;3\;4\;5\;6\;7\;8 & \quad\mapsto\cdots\\[10pt]  &  & \quad\mapsto\quad & 1\;2\;3\;4\;5\;6\;7 & \quad\mapsto\cdots\\[10pt]  &  & \quad\mapsto\quad & 1\;2\;3\;4\;5\;6 & \quad\mapsto\cdots 
\end{alignat*}
\end{minipage}
};

\draw[red] (-4.75,3.61) ellipse (19bp and 11bp);
\draw[red] (-2.23,3.61) ellipse (19bp and 11bp);
\draw[red] (-1.22,3.61) ellipse (19bp and 11bp);
\draw[red] (1.17,3.61) ellipse (19bp and 11bp);
\draw[red] (2.15,3.61) ellipse (19bp and 11bp);
\draw[red] (3.22,3.61) ellipse (22bp and 11bp);

\draw[red] (1.17,2.64) ellipse (19bp and 11bp);
\draw[red] (2.15,2.64) ellipse (19bp and 11bp);
\draw[red] (2.8,2.64) ellipse (19bp and 11bp);

\draw[red] (1.15,1.68) ellipse (19bp and 11bp);
\draw[red] (2.15,1.68) ellipse (19bp and 11bp);
\draw[red] (2.5,1.68) ellipse (19bp and 11bp);

\draw[red] (-2.23,0.71) ellipse (19bp and 11bp);
\draw[red] (-1.58,0.71) ellipse (19bp and 11bp);
\draw[red] (1.17,0.71) ellipse (19bp and 11bp);
\draw[red] (1.82,0.71) ellipse (19bp and 11bp);
\draw[red] (2.8,0.71) ellipse (19bp and 11bp);

\draw[red] (1.15,-0.25) ellipse (19bp and 11bp);
\draw[red] (1.82,-0.25) ellipse (19bp and 11bp);
\draw[red] (2.5,-0.25) ellipse (19bp and 11bp);

\draw[red] (1.17,-1.23) ellipse (19bp and 11bp);
\draw[red] (1.82,-1.23) ellipse (19bp and 11bp);
\draw[red] (2.18,-1.23) ellipse (19bp and 11bp);

\draw[red] (-2.23,-2.19) ellipse (19bp and 11bp);
\draw[red] (-1.91,-2.19) ellipse (19bp and 11bp);
\draw[red] (1.15,-2.19) ellipse (19bp and 11bp);
\draw[red] (1.49,-2.19) ellipse (19bp and 11bp);
\draw[red] (2.5,-2.19) ellipse (19bp and 11bp);

\draw[red] (1.17,-3.15) ellipse (19bp and 11bp);
\draw[red] (1.49,-3.15) ellipse (19bp and 11bp);
\draw[red] (2.18,-3.15) ellipse (19bp and 11bp);

\draw[red] (1.17,-4.12) ellipse (19bp and 11bp);
\draw[red] (1.49,-4.12) ellipse (19bp and 11bp);
\draw[red] (1.83,-4.12) ellipse (19bp and 11bp);

\end{tikzpicture}
\end{center}

\caption{$1234$-clusters}
\end{figure}
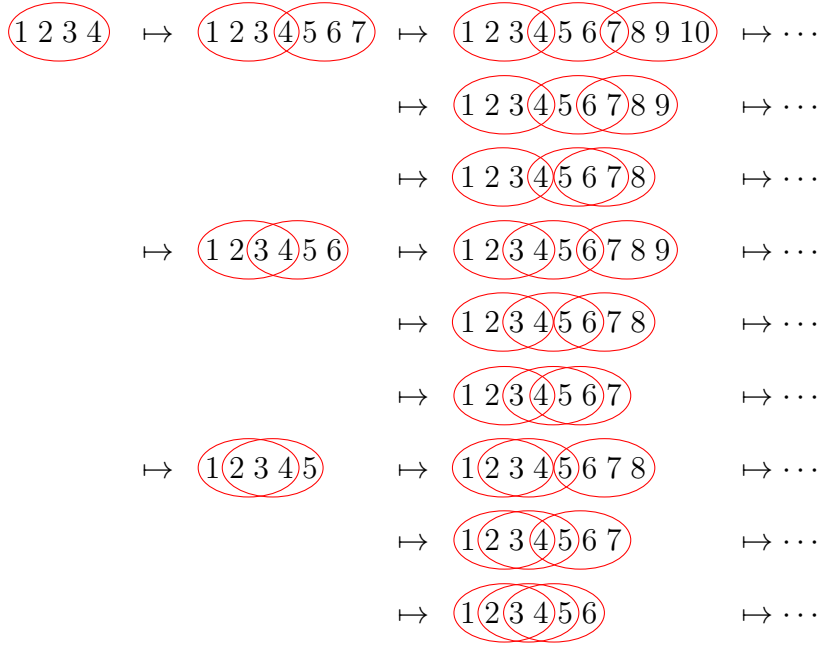

Moreover, since $12\cdots m$-clusters are themselves monotone increasing,
their inverses are also monotone increasing and therefore have no
descents, peaks, or left peaks. Using (\ref{e-rcpinc}), it follows
that {\allowdisplaybreaks
\begin{align*}
\sum_{k=2}^{\infty}R_{12\cdots m,k}^{(\ides,\icomaj)}(s,t,q)x^{k} & =\sum_{k=2}^{\infty}\sum_{\pi\in\mathfrak{S}_{k}}t^{\ides(\pi)+1}q^{\icomaj(\pi)}\sum_{c\in C_{12\cdots m,\pi}}s^{\mk_{12\cdots m}(c)}x^{k}\\
 & =t\sum_{k=2}^{\infty}\sum_{\pi\in\mathfrak{S}_{k}}\sum_{c\in C_{12\cdots m,\pi}}s^{\mk_{12\cdots m}(c)}x^{k} \\ 
 & =\frac{stx^{m}}{1-s\sum_{l=1}^{m-1}x^{l}};
\end{align*}
our formulas for $\sum_{k=2}^{\infty}R_{12\cdots m,k}^{\ipk}(s,t)x^{k}$
and $\sum_{k=2}^{\infty}R_{12\cdots m,k}^{\ilpk}(s,t)x^{k}$ are obtained
in the same way. Lastly, since our formula for $\sum_{k=2}^{\infty}R_{12\cdots m,k}^{(\ides,\icomaj)}(s,t,q)x^{k}$
does not depend on $q$, we have the same formula for $\sum_{k=2}^{\infty}R_{12\cdots m,k}^{\ides}(s,t)x^{k}$.}
\end{proof}

\subsection{Monotone patterns, inverse descent number, and inverse major index}

We will now derive generating function formulas for $A_{12\cdots m,n}^{(\ides,\imaj)}(t,q)$,
$A_{12\cdots m,n}^{\ides}(s,t)$, and $A_{12\cdots m,n}^{\ides}(t)$.
\begin{thm}
\label{t-incidesimaj} Let $m\geq2$. We have \leqnomode
\begin{multline*}
\qquad\tag{{a}}\sum_{n=0}^{\infty}\frac{A_{12\cdots m,n}^{(\ides,\imaj)}(t,q)}{\prod_{i=0}^{n}(1-tq^{i})}x^{n}\\
=\sum_{n=0}^{\infty}\left(\frac{tx}{(1-t)(1-tq)}-\sum_{j=1}^{\infty}\left(\frac{tx^{jm}}{\prod_{i=0}^{jm}(1-tq^{i})}-\frac{tx^{jm+1}}{\prod_{i=0}^{jm+1}(1-tq^{i})}\right)\right)^{*\left\langle n\right\rangle }\qquad
\end{multline*}
and
\begin{multline*}
\tag{{b}}\qquad\sum_{n=0}^{\infty}\frac{A_{12\cdots m,n}^{(\ides,\imaj)}(t,q)}{\prod_{i=0}^{n}(1-tq^{i})}x^{n}\\
=1+\sum_{k=1}^{\infty}\left[\sum_{j=0}^{\infty}\left({k+jm-1 \choose k-1}_{\!\!q}x^{jm}-{k+jm \choose k-1}_{\!\!q}x^{jm+1}\right)\right]^{-1}t^{k}.\qquad
\end{multline*}
\end{thm}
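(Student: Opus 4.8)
The plan is to derive part (a) by specializing the general machinery of Section 3.5---specifically Theorem \ref{t-gjcmidesicomaj}---and then to extract part (b) from (a) by expanding the right-hand side of (a) in powers of $t$ and identifying the coefficients as $q$-binomials. For part (a), the first step is to pass from $A_{12\cdots m,n}^{(\ides,\imaj)}(t,q)$ to $A_{12\cdots m,n}^{(\ides,\icomaj)}(s,t,q)$ using the substitution $A_{\Gamma,n}^{(\ides,\imaj)}(s,t,q)=q^{-n}A_{\Gamma,n}^{(\ides,\icomaj)}(s,tq^{n},q^{-1})$ recorded just after the proof of Theorem \ref{t-gjcmidesicomaj}, together with the avoidance specialization $s=0$. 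Then I would apply Theorem \ref{t-gjcmidesicomaj} with $\Gamma=\{12\cdots m\}$ and $s=0$ (so $s-1=-1$), which converts the left-hand side into $\sum_{n\geq 0} A_{12\cdots m,n}^{(\ides,\icomaj)}(t,q)\,x^n/\prod_{i=0}^n(1-tq^i)$ and the right-hand side into an $n$-fold Hadamard power of $tx/((1-t)(1-tq)) + \sum_{k\geq 2} R_{12\cdots m,k}^{(\ides,\icomaj)}(-1,t,q)\,x^k/\prod_{i=0}^k(1-tq^i)$. The key input here is Lemma \ref{l-incR}, which gives $\sum_{k\geq 2} R_{12\cdots m,k}^{(\ides,\icomaj)}(s,t,q)\,x^k = stx^m/(1-s\sum_{l=1}^{m-1}x^l)$, independent of $q$; setting $s=-1$ yields $-tx^m/(1+\sum_{l=1}^{m-1}x^l)$.

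The technical heart of part (a) is therefore to show that
\[
\frac{-tx^m}{1+x+x^2+\cdots+x^{m-1}} = -t\sum_{j=1}^{\infty}\big(x^{jm}-x^{jm+1}\big)
\]
at the level of coefficient sequences, and more precisely that after dividing the $k$th coefficient (in $x$) of $-tx^m/(1+\cdots+x^{m-1})$ by $\prod_{i=0}^{k}(1-tq^i)$ one lands exactly on the expression $-\sum_{j\geq 1}\big(tx^{jm}/\prod_{i=0}^{jm}(1-tq^i) - tx^{jm+1}/\prod_{i=0}^{jm+1}(1-tq^i)\big)$ appearing inside the Hadamard power in (a). This reduces to the elementary identity $1/(1+x+\cdots+x^{m-1}) = (1-x)/(1-x^m) = (1-x)\sum_{j\geq 0}x^{jm}$, so $x^m/(1+\cdots+x^{m-1}) = \sum_{j\geq 0}(x^{jm+m}-x^{jm+m+1}) = \sum_{j\geq 1}(x^{jm}-x^{jm+1})$. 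Since each power $x^k$ in $\bar R$ carries with it the weight $1/\prod_{i=0}^k(1-tq^i)$ coming from $\Psi_{(\ides,\icomaj)}$, the per-monomial denominators are exactly as written, and (a) follows. I expect this bookkeeping---keeping straight which denominator attaches to which power of $x$, and verifying that the $R$-term in Theorem \ref{t-gjcmidesicomaj} really does factor through Lemma \ref{l-incR}'s generating function---to be the main obstacle, though it is a routine (if fussy) manipulation rather than a conceptual difficulty.

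For part (b), the plan is to expand the Hadamard-power expression in (a) as a power series in $t$ and read off the coefficient of $t^k$. Since $f^{*\langle n\rangle}$ as $n$ ranges over $\mathbb{N}$ sums to the Hadamard inverse $\big(\tfrac{1}{1-t} - g\big)^{*\langle -1\rangle}$ where $g$ is the bracketed series in (a), and since the Hadamard product in $t$ is just termwise multiplication of $t$-coefficients, this amounts to inverting, for each fixed power $t^k$, the ordinary power series (in $x$) obtained as the coefficient of $t^k$ in $\tfrac{1}{1-t} - g$. The term $tx/((1-t)(1-tq))$ contributes $\sum_{i\geq 0}\binom{i+1}{1}_q x$-type pieces via $1/((1-t)(1-tq)) = \sum_{k\geq 0}[k+1]_q t^k$, and more generally $1/\prod_{i=0}^{N}(1-tq^i)$ has $t$-coefficients that are Gaussian binomial coefficients by the standard $q$-series identity $1/\prod_{i=0}^{N}(1-tq^i) = \sum_{k\geq 0}\binom{N+k}{k}_q t^k$. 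Applying this with $N=jm$ and $N=jm+1$ to the two families of terms in $g$, the coefficient of $t^k$ in $\tfrac{1}{1-t}-g$ becomes $\sum_{j\geq 0}\big(\binom{k+jm-1}{k-1}_q x^{jm} - \binom{k+jm}{k-1}_q x^{jm+1}\big)$ for $k\geq 1$ (with a $1$ contributed at $k=0$ matching the leading $1$ in (b)), and taking the ordinary power-series inverse in $x$ of this, then multiplying by $t^k$ and summing, produces precisely the claimed formula. The only care needed is in the indexing---checking that the $jm\mapsto jm-1$ shift in the upper/lower arguments of the Gaussian binomials comes out correctly from the $\prod_{i=0}^{k}(1-tq^i)$ denominators combining with the numerator denominators $\prod_{i=0}^{jm}$ and $\prod_{i=0}^{jm+1}$---but this is again a direct computation using $\binom{a+b}{a}_q = [a+b]_q!/([a]_q!\,[b]_q!)$ and cancellation of $q$-factorials.
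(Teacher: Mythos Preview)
Your derivation of the cluster expansion and of part (b) matches the paper's proof essentially step for step: apply Theorem~\ref{t-gjcmidesicomaj} with $\Gamma=\{12\cdots m\}$ and $s=0$, feed in Lemma~\ref{l-incR}, rewrite $-tx^m/(1+x+\cdots+x^{m-1})$ as $-t\sum_{j\ge 1}(x^{jm}-x^{jm+1})$ via $(1-x)/(1-x^m)$, and then for (b) expand each $1/\prod_{i=0}^{N}(1-tq^i)$ as $\sum_{k\ge 0}\qbinom{N+k}{k}t^k$ and invert coefficientwise in $t$.

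The one point where you diverge from the paper is the very first step, and there your plan has a gap. You propose passing from $A_{12\cdots m,n}^{(\ides,\imaj)}$ to $A_{12\cdots m,n}^{(\ides,\icomaj)}$ via the general substitution
\[
A_{\Gamma,n}^{(\ides,\imaj)}(s,t,q)=q^{-n}A_{\Gamma,n}^{(\ides,\icomaj)}(s,tq^{n},q^{-1}).
\]
But this substitution sends $t\mapsto tq^{n}$, which depends on $n$, so it is not a single change of variables that can be applied to the generating function $\sum_{n}A_{\Gamma,n}^{(\ides,\icomaj)}(t,q)x^{n}/\prod_{i=0}^{n}(1-tq^{i})$ as a whole. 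To make the left-hand side of Theorem~\ref{t-gjcmidesicomaj} literally become the left-hand side of part~(a) you need the \emph{equality} $A_{12\cdots m,n}^{(\ides,\imaj)}(t,q)=A_{12\cdots m,n}^{(\ides,\icomaj)}(t,q)$, not merely a relation under an $n$-dependent reparametrization. The paper obtains this equality in one line from Proposition~\ref{p-rcpoly}\,(b): since $12\cdots m$ is fixed by reverse-complementation, $A_{12\cdots m,n}^{(\ides,\imaj)}=A_{(12\cdots m)^{rc},n}^{(\ides,\imaj)}=A_{12\cdots m,n}^{(\ides,\icomaj)}$. Replace your substitution step with this observation and the rest of your argument goes through unchanged.
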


\begin{proof}
In light of Proposition \ref{p-rcpoly} (b) and the fact that $12\cdots m$
is invariant under reverse-complementation, it suffices to prove these
formulas with the polynomial $A_{12\cdots m,n}^{(\ides,\imaj)}(t,q)$
replaced by $A_{12\cdots m,n}^{(\ides,\icomaj)}(t,q)$.

We first apply Theorem \ref{t-gjcmidesicomaj} to $\Gamma=\{12\cdots m\}$
and set $s=0$ to obtain
\begin{multline}
\qquad\sum_{n=0}^{\infty}\frac{A_{12\cdots m,n}^{(\ides,\icomaj)}(t,q)}{\prod_{i=0}^{n}(1-tq^{i})}x^{n}\\
=\sum_{n=0}^{\infty}\left(\frac{tx}{(1-t)(1-tq)}+\sum_{k=2}^{\infty}R_{12\cdots m,k}^{(\ides,\icomaj)}(-1,t,q)\frac{x^{k}}{\prod_{i=0}^{k}(1-tq^{i})}\right)^{*\left\langle n\right\rangle }.\qquad\label{e-incidesicomaj}
\end{multline}
By Lemma \ref{l-incR}, we have 
\begin{align*}
\sum_{k=2}^{\infty}R_{12\cdots m,k}^{(\ides,\icomaj)}(-1,t,q)x^{k} & =\frac{-tx^{m}}{1+x+x^{2}+\cdots+x^{m-1}}\\
 & =-\frac{tx^{m}(1-x)}{1-x^{m}}\\
 & =-\sum_{j=1}^{\infty}(tx^{jm}-tx^{jm+1})
\end{align*}
and thus
\begin{equation}
\sum_{k=2}^{\infty}R_{12\cdots m,k}^{(\ides,\icomaj)}(-1,t,q)\frac{x^{k}}{\prod_{i=0}^{k}(1-tq^{i})}
=-\sum_{j=1}^{\infty}\left(\frac{tx^{jm}}{\prod_{i=0}^{jm}(1-tq^{i})}-\frac{tx^{jm+1}}{\prod_{i=0}^{jm+1}(1-tq^{i})}\right).\label{e-incidesicomajR}
\end{equation}
Combining (\ref{e-incidesicomaj}) with (\ref{e-incidesicomajR})
yields part (a).

Now we prove part (b). Here we begin with (\ref{e-incidesicomajR}),
and use the identity
\[
\frac{1}{\prod_{i=0}^{n}(1-tq^{i})}=\sum_{k=0}^{\infty}{n+k \choose k}_{\!\!q}t^{k}
\]
\cite[p. 68]{Stanley2011} to arrive at
\begin{align*}
 & \sum_{n=0}^{\infty}\frac{A_{12\cdots m,n}^{(\ides,\icomaj)}(t,q)}{\prod_{i=0}^{n}(1-tq^{i})}x^{n} \\
 & \qquad = \sum_{n=0}^{\infty}\left(\sum_{k=0}^{\infty}\left({k+1 \choose k}_{\!\!q}x-\sum_{j=1}^{\infty}{k+jm \choose k}_{\!\!q}x^{jm}+\sum_{j=1}^{\infty}{k+jm+1 \choose k}_{\!\!q}x^{jm+1}\right)t^{k+1}\right)^{\hspace{-2.5pt}*\left\langle n\right\rangle }.
\end{align*}
A sequence of algebraic manipulations yields {\allowdisplaybreaks
\begin{align*}
 & \sum_{n=0}^{\infty}\frac{A_{12\cdots m,n}^{(\ides,\icomaj)}(t,q)}{\prod_{i=0}^{n}(1-tq^{i})}x^{n}\\
 & \quad =\frac{1}{1-t} +\sum_{k=0}^{\infty}\sum_{n=1}^{\infty}\left({k+1 \choose k}_{\!\!q}x-\sum_{j=1}^{\infty}{k+jm \choose k}_{\!\!q}x^{jm}+\sum_{j=1}^{\infty}{k+jm+1 \choose k}_{\!\!q}x^{jm+1}\right)^{\hspace{-2.5pt}n}t^{k+1}\\
 & \quad =1+\sum_{k=0}^{\infty}\sum_{n=0}^{\infty}\left({k+1 \choose k}_{\!\!q}x-\sum_{j=1}^{\infty}{k+jm \choose k}_{\!\!q}x^{jm}+\sum_{j=1}^{\infty}{k+jm+1 \choose k}_{\!\!q}x^{jm+1}\right)^{\hspace{-2.5pt}n}t^{k+1}\\
 & \quad =1+\sum_{k=0}^{\infty}\left(1-{k+1 \choose k}_{\!\!q}x+\sum_{j=1}^{\infty}{k+jm \choose k}_{\!\!q}x^{jm}-\sum_{j=1}^{\infty}{k+jm+1 \choose k}_{\!\!q}x^{jm+1}\right)^{\hspace{-2.7pt}-1}t^{k+1}\\
 & \quad =1+\sum_{k=0}^{\infty}\left(\sum_{j=0}^{\infty}{k+jm \choose k}_{\!\!q}x^{jm}-\sum_{j=0}^{\infty}{k+jm+1 \choose k}_{\!\!q}x^{jm+1}\right)^{\hspace{-2.7pt}-1}t^{k+1}\\
 & \quad =1+\sum_{k=1}^{\infty}\left[\sum_{j=0}^{\infty}\left({k+jm-1 \choose k-1}_{\!\!q}x^{jm}-{k+jm \choose k-1}_{\!\!q}x^{jm+1}\right)\right]^{-1}t^{k},
\end{align*}
thus completing the proof.}
\end{proof}
Let $A_{n}(t,q)\coloneqq\sum_{\pi\in\mathfrak{S}_{n}}t^{\des(\pi)+1}q^{\maj(\pi)}$
for $n\geq1$ and $A_{0}(t,q)\coloneqq1$; these are called $q$-\textit{Eulerian
polynomials} and encode the joint distribution of $\des$ and $\maj$
over $\mathfrak{S}_{n}$. Observe that
\[
\lim_{m\rightarrow\infty}A_{12\cdots m,n}^{(\ides,\imaj)}(t,q)=\sum_{\pi\in\mathfrak{S}_{n}}t^{\ides(\pi)+1}q^{\imaj(\pi)}=\sum_{\pi\in\mathfrak{S}_{n}}t^{\des(\pi)+1}q^{\maj(\pi)}=A_{n}(t,q);
\]
we can exploit this limit to recover from Theorem \ref{t-incidesimaj}
a classical identity involving $q$-Eulerian polynomials. By taking
the limit as $m\rightarrow\infty$ of both sides of Theorem \ref{t-incidesimaj}
(b), we obtain
\begin{align*}
\sum_{n=0}^{\infty}\frac{A_{n}(t,q)}{\prod_{i=0}^{n}(1-tq^{i})}x^{n} & =1+\sum_{k=1}^{\infty}(1-[k]_{q}x)^{-1}t^{k}\\
 & =1+\sum_{k=1}^{\infty}\sum_{n=0}^{\infty}[k]_{q}^{n}x^{n}t^{k}\\
 & =1+\sum_{n=0}^{\infty}\sum_{k=1}^{\infty}[k]_{q}^{n}t^{k}x^{n},
\end{align*}
and extracting coefficients of $x^{n}$ yields the famous \textit{Carlitz
identity} \cite[Corollary 6.1]{Petersen2015}
\[
\frac{A_{n}(t,q)}{\prod_{i=0}^{n}(1-tq^{i})}=\sum_{k=1}^{\infty}[k]_{q}^{n}t^{k}.
\]

Next, we have the following formulas for the polynomials $A_{12\cdots m,n}^{\ides}(s,t)$
and $A_{12\cdots m,n}^{\ides}(t)$.
\begin{thm}
\label{t-incides} Let $m\geq2$. Then \leqnomode
\begin{align*}
\tag{{a}}\sum_{n=0}^{\infty}\frac{A_{12\cdots m,n}^{\ides}(s,t)}{(1-t)^{n+1}}x^{n} & =\sum_{n=0}^{\infty}\Bigg(\frac{tx}{(1-t)^{2}}+\frac{(s-1)tz^{m}}{(1-t)(1-(s-1)\sum_{l=1}^{m-1}z^{l})}\Bigg)^{*\left\langle n\right\rangle },
\end{align*}
\begin{align*}
\tag{{b}}\sum_{n=0}^{\infty}\frac{A_{12\cdots m,n}^{\ides}(t)}{(1-t)^{n+1}}x^{n} & =\sum_{n=0}^{\infty}\Bigg(\frac{tz(1-z^{m-1})}{(1-t)(1-z^{m})}\Bigg)^{*\left\langle n\right\rangle },
\end{align*}
and
\begin{align*}
\tag{{c}}\sum_{n=0}^{\infty}\frac{A_{12\cdots m,n}^{\ides}(t)}{(1-t)^{n+1}}x^{n}=
1+\sum_{k=1}^{\infty}\left[\sum_{j=0}^{\infty}\left({k+jm-1 \choose k-1}x^{jm}-{k+jm \choose k-1}x^{jm+1}\right)\right]^{-1}t^{k},
\end{align*}
where $z=x/(1-t)$.
\end{thm}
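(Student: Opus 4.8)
The plan is to obtain all three parts as direct specializations of results already established: parts (a) and (b) from the $\ides$-version of the generalized cluster method, Theorem~\ref{t-gjcmides}, together with the cluster generating function computed in Lemma~\ref{l-incR}, and part (c) from Theorem~\ref{t-incidesimaj}(b) by setting $q=1$.

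For part (a), I would apply Theorem~\ref{t-gjcmides} to $\Gamma=\{12\cdots m\}$. By Lemma~\ref{l-incR}, $\sum_{k=2}^{\infty} R_{12\cdots m,k}^{\ides}(s,t)\,x^{k}=\frac{stx^{m}}{1-s\sum_{l=1}^{m-1}x^{l}}$; replacing $s$ by $s-1$, replacing $x$ by $z=x/(1-t)$, and then multiplying by $\frac{1}{1-t}$ produces exactly the summand $\frac{(s-1)tz^{m}}{(1-t)(1-(s-1)\sum_{l=1}^{m-1}z^{l})}$ appearing inside the $n$-fold Hadamard power of the claimed identity, while the summand $\frac{tx}{(1-t)^{2}}$ is already present on the right-hand side of Theorem~\ref{t-gjcmides}. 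Thus (a) follows with essentially no further work.

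For part (b), I would set $s=0$ in (a). The second summand becomes $\frac{-tz^{m}}{(1-t)(1+z+z^{2}+\cdots+z^{m-1})}$, and using $1+z+\cdots+z^{m-1}=(1-z^{m})/(1-z)$ this becomes $\frac{-tz^{m}(1-z)}{(1-t)(1-z^{m})}$. Since $z=x/(1-t)$ we have $\frac{tx}{(1-t)^{2}}=\frac{tz}{1-t}$; combining the two terms over the common denominator $(1-t)(1-z^{m})$ and simplifying the numerator $z(1-z^{m})-z^{m}(1-z)=z-z^{m}$ yields $\frac{tz(1-z^{m-1})}{(1-t)(1-z^{m})}$, which is the claimed expression. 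For part (c), I would simply set $q=1$ in Theorem~\ref{t-incidesimaj}(b): then $A_{12\cdots m,n}^{(\ides,\imaj)}(t,1)=A_{12\cdots m,n}^{\ides}(t)$, the product $\prod_{i=0}^{n}(1-tq^{i})$ becomes $(1-t)^{n+1}$, and each $q$-binomial coefficient $\binom{k+jm-1}{k-1}_{q}$ becomes the ordinary binomial coefficient $\binom{k+jm-1}{k-1}$, giving precisely the stated formula. (Alternatively, (c) could be extracted from (b) by expanding $(1-t)^{-(n+1)}=\sum_{k}\binom{n+k}{k}t^{k}$ and repeating the algebra from the proof of Theorem~\ref{t-incidesimaj}(b), but the route through $q=1$ is cleaner.)

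Since every step is a substitution or a routine manipulation of rational functions and geometric series, there is no substantive obstacle here; the only points requiring care are the bookkeeping of the change of variable $z=x/(1-t)$ and the observation that the geometric-series identities used are valid as identities of formal power series because $z$ and $z^{m}$ have positive order in $x$.
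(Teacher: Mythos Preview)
Your proposal is correct and follows essentially the same approach as the paper: part (a) from Theorem~\ref{t-gjcmides} and Lemma~\ref{l-incR}, part (b) by setting $s=0$ in (a) and simplifying, and part (c) by setting $q=1$ in Theorem~\ref{t-incidesimaj}(b). The algebraic details you supply for (b) match the paper's computation.
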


\begin{proof}
Part (a) follows immediately from Theorem \ref{t-gjcmides} and Lemma
\ref{l-incR}, and part (c) is obtained from substituting $q=1$ into
Theorem \ref{t-incidesimaj} (b). Then taking $s=0$ in part (a),
we have
\begin{align*}
\sum_{n=0}^{\infty}\frac{A_{12\cdots m,n}^{\ides}(t)}{(1-t)^{n+1}}x^{n} & =\sum_{n=0}^{\infty}\Bigg(\frac{tx}{(1-t)^{2}}-\frac{tz^{m}}{(1-t)(1+z+z^{2}+\cdots+z^{m-1})}\Bigg)^{*\left\langle n\right\rangle }\\
 & =\sum_{n=0}^{\infty}\Bigg(\frac{tz}{1-t}-\frac{tz^{m}(1-z)}{(1-t)(1-z^{m})}\Bigg)^{*\left\langle n\right\rangle }\\
 & =\sum_{n=0}^{\infty}\Bigg(\frac{tz(1-z^{m-1})}{(1-t)(1-z^{m})}\Bigg)^{*\left\langle n\right\rangle }
\end{align*}
which proves part (b).
\end{proof}
We use Theorem \ref{t-incides} to compute the first ten polynomials
$A_{12\cdots m,n}^{\ides}(t)$ for $m=3$ and $m=4$, which are displayed
in Tables 1\textendash 2. (By Proposition \ref{p-rcpoly} (d), the
polynomials $A_{m\cdots21,n}^{\ides}(t)$ are the same as the $A_{12\cdots m,n}^{\ides}(t)$
but with the order of their coefficients reversed.)

\renewcommand{\arraystretch}{1.2}

\begin{table}[H]
\centering{}%
\begin{tabular}{c|c}
$n$ & $A_{123,n}^{\ides}(t)$\tabularnewline
\hline 
$0$ & 1\tabularnewline
$1$ & $t$\tabularnewline
$2$ & $t+t^{2}$\tabularnewline
$3$ & $4t^{2}+t^{3}$\tabularnewline
$4$ & $5t^{2}+11t^{3}+t^{4}$\tabularnewline
$5$ & $4t^{2}+39t^{3}+26t^{4}+t^{5}$\tabularnewline
$6$ & $5t^{2}+91t^{3}+195t^{4}+57t^{5}+t^{6}$\tabularnewline
$7$ & $4t^{2}+193t^{3}+904t^{4}+795t^{5}+120t^{6}+t^{7}$\tabularnewline
$8$ & $5t^{2}+396t^{3}+3420t^{4}+6400t^{5}+2889t^{6}+247t^{7}+t^{8}$\tabularnewline
$9$ & $4t^{2}+761t^{3}+11610t^{4}+39275t^{5}+37450t^{6}+9774t^{7}+502t^{8}+t^{9}$\tabularnewline
\end{tabular}\vspace{5bp}
\caption{Distribution of $\protect\ides$ over $\mathfrak{S}_{n}(123)$}
\end{table}

\begin{table}
\centering{}%
\begin{tabular}{c|c}
$n$ & $A_{1234,n}^{\ides}(t)$\tabularnewline
\hline 
$0$ & 1\tabularnewline
$1$ & $t$\tabularnewline
$2$ & $t+t^{2}$\tabularnewline
$3$ & $t+4t^{2}+t^{3}$\tabularnewline
$4$ & $11t^{2}+11t^{3}+t^{4}$\tabularnewline
$5$ & $18t^{2}+66t^{3}+26t^{4}+t^{5}$\tabularnewline
$6$ & $28t^{2}+254t^{3}+302t^{4}+57t^{5}+t^{6}$\tabularnewline
$7$ & $40t^{2}+814t^{3}+2160t^{4}+1191t^{5}+120t^{6}+t^{7}$\tabularnewline
$8$ & $64t^{2}+2358t^{3}+12030t^{4}+14340t^{5}+4293t^{6}+247t^{7}+t^{8}$\tabularnewline
$9$ & $96t^{2}+6538t^{3}+57804t^{4}+127250t^{5}+82102t^{6}+14608t^{7}+502t^{8}+t^{9}$\tabularnewline
\end{tabular}\vspace{5bp}
\caption{Distribution of $\protect\ides$ over $\mathfrak{S}_{n}(1234)$}
\end{table}

Curiously, beginning with $n=3$, the quadratic coefficient of $A_{123,n}^{\ides}(t)$
alternates between 4 and 5. We state this observation in the following
proposition.
\begin{prop}
\label{p-123ides} Let $n\geq3$. The number of permutations $\pi$
in $\mathfrak{S}_{n}(123)$ with $\ides(\pi)=1$ is 4 if $n$ is odd,
and is 5 if $n$ is even.
\end{prop}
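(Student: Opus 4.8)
The plan is to dualize the problem to one about permutations with a single descent, recast that as a question about binary words, and finish with a short explicit enumeration.

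First I would pass to inverses: setting $\tau=\pi^{-1}$, and using that $\pi\mapsto\pi^{-1}$ is a bijection with $\ides(\pi)=\des(\pi^{-1})$, the quantity to be computed is the number of $\tau\in\mathfrak{S}_n$ with $\des(\tau)=1$ for which $\tau^{-1}$ avoids the consecutive pattern $123$. A permutation $\tau$ with $\des(\tau)=1$ is determined by the set $A$ of values lying to the left of its unique descent: writing $B=[n]\setminus A$, the permutation $\tau$ is the increasing listing of $A$ followed by the increasing listing of $B$, and this has exactly one descent precisely when $A$ is neither empty nor an initial segment $\{1,\dots,j\}$. Encode $A$ by the word $c=c_1c_2\cdots c_n\in\{A,B\}^n$ with $c_i=A$ if $i\in A$ and $c_i=B$ otherwise; then the permutations with $\des(\tau)=1$ correspond bijectively to the words $c$ that are not of the ``staircase'' form $A^jB^{n-j}$.

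Next I would rephrase the pattern condition in terms of $c$. Since $(\tau^{-1})_i$ equals the position in $\tau$ of the value $i$, the permutation $\tau^{-1}$ avoids consecutive $123$ if and only if for no $i$ do the values $i,i+1,i+2$ occur in increasing order of position in $\tau$. Because each of the two increasing runs of $\tau$ lists its values in increasing order, the value $i$ precedes the value $i+1$ in $\tau$ unless $i\in B$ and $i+1\in A$, i.e.\ unless $c_ic_{i+1}=BA$; hence $(i,i+1,i+2)$ occurs in positional order exactly when the length-$3$ factor $c_ic_{i+1}c_{i+2}$ contains no occurrence of $BA$, which for a binary word means that this factor lies in $\{AAA,AAB,ABB,BBB\}$. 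So the permutations we want correspond bijectively to the non-staircase words $c\in\{A,B\}^n$ having no factor in $\{AAA,AAB,ABB,BBB\}$.

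It then remains to count those words. If $c$ avoids all four forbidden factors, then a repeated adjacent pair $AA$ can occur only in positions $n-1,n$ (otherwise the factor just to its right is $AAA$ or $AAB$) and a repeated adjacent pair $BB$ can occur only in positions $1,2$ (otherwise the factor just to its left is $ABB$ or $BBB$), while every other adjacent pair consists of distinct letters. It follows that $c$ must be one of: the alternating word beginning with $A$; the alternating word beginning with $B$; the unique word with $c_1c_2=BB$ and $c_2c_3\cdots c_n$ alternating; the unique word with $c_{n-1}c_n=AA$ and $c_1c_2\cdots c_{n-1}$ alternating; and the unique word with $c_1c_2=BB$, $c_{n-1}c_n=AA$, and $c_2c_3\cdots c_{n-1}$ alternating. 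One checks that the first four always exist and are distinct, and that the fifth exists if and only if $n$ is even (its middle alternating stretch runs from the letter $B$ in position $2$ to the letter $A$ in position $n-1$, which pins down the parity of its length, hence of $n$). Finally, each forbidden factor is weakly increasing in the order $A<B$, and a staircase $A^jB^{n-j}$ with $n\ge 3$ is itself weakly increasing, so it contains one of the forbidden factors; thus no staircase is counted here, and the total is $4$ when $n$ is odd and $5$ when $n$ is even. Tracing the bijections back gives the stated count.

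The steps up through the reformulation are routine; the place needing the most care is the classification of the forbidden-factor-avoiding words, where one must check both that the list of five candidates is exhaustive and that it has no duplicates, together with the parity condition governing the fifth candidate. The hypothesis $n\ge 3$ enters precisely in ruling out staircases: for $n\le 2$ an alternating word can itself be a staircase, and the count would then be different.
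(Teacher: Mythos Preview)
Your proof is correct and follows the same opening move as the paper: pass to inverses and classify permutations $\tau$ with $\des(\tau)=1$ whose inverse avoids the consecutive pattern $123$. The paper frames the avoidance condition via \emph{reading sequences}---$\tau^{-1}$ avoids $12\cdots m$ if and only if every reading sequence of $\tau$ has length less than $m$---and then simply lists the four (for $n$ odd) or five (for $n$ even) permutations, declaring that ``a careful case analysis'' shows these are all, with details omitted. Your binary-word encoding $c\in\{A,B\}^n$ and the translation to forbidding the four weakly-increasing factors $\{AAA,AAB,ABB,BBB\}$ is essentially the same reformulation (your condition ``value $i$ precedes value $i+1$ in $\tau$ unless $c_ic_{i+1}=BA$'' is exactly what drives reading sequences), but it buys you a clean, fully written-out classification: the forbidden-factor argument pins down the location of any doubled letter and forces alternation elsewhere, making the exhaustiveness and the parity condition on the fifth word transparent rather than relegated to an omitted case check. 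So the two proofs are the same in spirit, but yours actually supplies the details the paper leaves out.
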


Proposition \ref{p-123ides} can be proven using Theorem \ref{t-incides},
but we shall instead sketch a combinatorial proof, which is more enlightening.
Our proof relies on the notion of reading sequences of permutations.
Given a permutation $\pi\in\mathfrak{S}_{n}$, we read the letters
$1,2,\dots,n$ in $\pi$ from left-to-right in order, going back to
the beginning of $\pi$ when necessary; this process realizes $\pi$
as a shuffle of \textit{reading sequences} \cite[p. 37]{Stanley2011}.
For example, take $\pi=748361259$; then the reading sequences of
$\pi$ are $12$, $3$, $45$, $6$, and $789$. It is easy to see
that the lengths of reading sequences of $\pi$ are precisely the
lengths of the increasing runs of $\pi^{-1}$. Thus, the inverse of
a permutation $\pi$ avoids $12\cdots m$ if and only if every reading
sequence of $\pi$ has length less than $m$.
\begin{proof}
The number of permutations $\pi$ in $\mathfrak{S}_{n}(123)$ with
$\ides(\pi)=1$ is equal to the number of permutations $\pi$ with
$\des(\pi)=1$ whose inverse $\pi^{-1}$ is in $\mathfrak{S}_{n}(123)$,
so it suffices to prove the result for the latter family of permutations.
More specifically, we claim that if $n$ is odd, then the permutations
$\pi$ in $\mathfrak{S}_{n}$ with $\des(\pi)=1$ whose inverse $\pi^{-1}$
is in $\mathfrak{S}_{n}(123)$ are
\begin{itemize}
\item $13\cdots n24\cdots(n-1)$,
\item $24\cdots(n-1)13\cdots n$,
\item $24\cdots(n-1)n13\cdots(n-2)$, and
\item $35\cdots n124\cdots(n-1)$.
\end{itemize}
Moreover, if $n$ is even, then the permutations $\pi$ in $\mathfrak{S}_{n}$
with $\des(\pi)=1$ whose inverse $\pi^{-1}$ is in $\mathfrak{S}_{n}(123)$
are
\begin{itemize}
\item $13\cdots(n-1)24\cdots n$
\item $13\cdots(n-1)n24\cdots(n-2)$,
\item $24\cdots n13\cdots(n-1)$,
\item $35\cdots(n-1)124\cdots n$, and
\item $35\cdots(n-1)n124\cdots(n-2)$.
\end{itemize}
Clearly, all of these permutations have exactly one descent and the
lengths of their reading sequences are all less than 3. A careful
case analysis shows that these are the only permutations in $\mathfrak{S}_{n}$
with these properties; we omit the details.
\end{proof}

\subsection{Monotone patterns and inverse peak number}

Next, we proceed to the polynomials $P_{12\cdots m,n}^{\ipk}(s,t)$
and $P_{12\cdots m,n}^{\ipk}(t)$, which are equal to the polynomials
$P_{m\cdots21,n}^{\ipk}(s,t)$ and $P_{m\cdots21,n}^{\ipk}(t)$ by
Proposition \ref{p-rcpoly} (e).
\begin{thm}
\label{t-monoipk} Let $m\geq2$. We have \leqnomode 
\begin{multline*}
\tag{{a}}\qquad\frac{1}{1-t}+\frac{1+t}{2(1-t)}\sum_{n=1}^{\infty}P_{12\cdots m,n}^{\ipk}(s,u)z^{n}\\
=\sum_{n=0}^{\infty}\Bigg(\frac{2tx}{(1-t)^{2}}+\frac{2t(s-1)z^{m}}{(1-t^{2})(1-(s-1)\sum_{l=1}^{m-1}z^{l})}\Bigg)^{*\left\langle n\right\rangle },\qquad
\end{multline*}
\begin{alignat*}{1}
\tag{{b}}\frac{1}{1-t}+\frac{1+t}{2(1-t)}\sum_{n=1}^{\infty}P_{12\cdots m,n}^{\ipk}(u)z^{n} & =\sum_{n=0}^{\infty}\left(\frac{2tz(1-z^{m-1})}{(1-t^{2})(1-z^{m})}\right)^{*\left\langle n\right\rangle },
\end{alignat*}
and 
\begin{multline*}
\tag{{c}}\qquad\frac{1}{1-t}+\frac{1+t}{2(1-t)}\sum_{n=1}^{\infty}P_{12\cdots m,n}^{\ipk}(u)z^{n}\\
=1+\sum_{k=1}^{\infty}\left[1-2kx+\sum_{j=1}^{\infty}(c_{m,j,k}x^{jm}-c_{m,j,k}^{\prime}x^{jm+1})\right]^{-1}t^{k},\qquad
\end{multline*}
where $u=4t/(1+t)^{2}$, $z=(1+t)x/(1-t)$, and
\[
c_{m,j,k}={\displaystyle 2\sum_{l=1}^{k}{l+jm-1 \choose l-1}{jm-1 \choose k-l}}\quad\text{and}\quad c_{m,j,k}^{\prime}=2\sum_{l=1}^{k}{l+jm \choose l-1}{jm \choose k-l}.
\]
\end{thm}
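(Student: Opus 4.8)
The plan is to follow the same route as the proof of Theorem \ref{t-incides}, but with Theorem \ref{t-gjcmipk} in place of Theorem \ref{t-gjcmides}. For part (a), I would apply Theorem \ref{t-gjcmipk} to $\Gamma=\{12\cdots m\}$. By Lemma \ref{l-incR} (with $s\mapsto s-1$, $t\mapsto u$, $x\mapsto z$), the sum $\sum_{k\geq2}R_{12\cdots m,k}^{\ipk}(s-1,u)z^k$ evaluates to $\frac{(s-1)uz^m}{1-(s-1)\sum_{l=1}^{m-1}z^l}$. Since $u=4t/(1+t)^2$, the prefactor collapses as $\frac{1+t}{2(1-t)}\,u=\frac{2t}{1-t^2}$, so the argument of the Hadamard power becomes $\frac{2tx}{(1-t)^2}+\frac{2t(s-1)z^m}{(1-t^2)(1-(s-1)\sum_{l=1}^{m-1}z^l)}$, which is part (a).

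For part (b), set $s=0$ in part (a). Then $1-(s-1)\sum_{l=1}^{m-1}z^l=1+z+\cdots+z^{m-1}=(1-z^m)/(1-z)$, so the cluster term becomes $-\frac{2tz^m(1-z)}{(1-t^2)(1-z^m)}$. Writing $\frac{2tx}{(1-t)^2}=\frac{2tz}{1-t^2}$ (immediate from $z=(1+t)x/(1-t)$) and using $z-\frac{z^m(1-z)}{1-z^m}=\frac{z(1-z^{m-1})}{1-z^m}$, the argument of the Hadamard power collapses to $\frac{2tz(1-z^{m-1})}{(1-t^2)(1-z^m)}$, which gives part (b).

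Part (c) is where the real work lies. Write $W=\frac{2tz(1-z^{m-1})}{(1-t^2)(1-z^m)}$ for the argument of the Hadamard power in part (b). Expanding $\frac{1-z^{m-1}}{1-z^m}=\sum_{j\geq0}(z^{jm}-z^{jm+m-1})$, substituting $z^l=(1+t)^l x^l/(1-t)^l$, pulling out the $j=0$ term of the first family (which is $\frac{2tx}{(1-t)^2}$), and reindexing the second family by $j\mapsto j-1$ gives
\[
W=\frac{2tx}{(1-t)^2}+\sum_{j\geq1}\left(\frac{2t(1+t)^{jm}x^{jm+1}}{(1-t)^{jm+2}}-\frac{2t(1+t)^{jm-1}x^{jm}}{(1-t)^{jm+1}}\right).
\]
After this substitution everything is a genuine power series in $t$ and $x$, the Hadamard product is in $t$, and $W$ is divisible by $t$ with each $[t^k]W$ ($k\geq1$) having no constant term in $x$; hence, exactly as in the derivation of Theorem \ref{t-incidesimaj}(b), $\sum_{n\geq0}W^{*\langle n\rangle}=1+\sum_{k\geq1}t^k/(1-[t^k]W)$. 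It remains to compute $[t^k]W$: using $(1+t)^N=\sum_a\binom{N}{a}t^a$ and $(1-t)^{-N}=\sum_b\binom{N-1+b}{b}t^b$ gives $[t^k]\frac{2tx}{(1-t)^2}=2kx$, and for each $j\geq1$, extracting $[t^{k-1}]$ from $(1+t)^{jm}(1-t)^{-(jm+2)}$ and from $(1+t)^{jm-1}(1-t)^{-(jm+1)}$ yields Vandermonde-type convolutions which, after the reindexing $l=b+1$, match $c'_{m,j,k}$ and $c_{m,j,k}$ respectively. Thus $[t^k]W=2kx-\sum_{j\geq1}(c_{m,j,k}x^{jm}-c'_{m,j,k}x^{jm+1})$, and substituting into $1+\sum_{k\geq1}t^k/(1-[t^k]W)$ produces part (c).

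The main obstacle is the bookkeeping in part (c): reindexing the geometric series so that the exponents of $x$ line up correctly as $x^{jm}$ and $x^{jm+1}$, and then identifying the binomial convolutions coming from $[t^{k-1}]$ of $(1+t)^N/(1-t)^{N'}$ with the stated double sums $c_{m,j,k}$ and $c'_{m,j,k}$. One should also take care to justify the Hadamard-inverse step — namely that $W$ is divisible by $t$ and each of its $t$-coefficients has zero constant term in $x$ — so that $\sum_{n\geq0}W^{*\langle n\rangle}$ is a well-defined element of $\mathbb{Q}[[t,x]]$ equal to $\left(\frac{1}{1-t}-W\right)^{*\langle-1\rangle}$.
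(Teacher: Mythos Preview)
Your proposal is correct and follows essentially the same route as the paper's proof. Part (a) is exactly as in the paper (Theorem~\ref{t-gjcmipk} plus Lemma~\ref{l-incR}), part (b) is the same $s=0$ simplification, and for part (c) the paper likewise expands the argument of the Hadamard power as a series in $t$ whose $t^{k}$-coefficient is a power series in $x$ with no constant term, then inverts each coefficient separately; the only cosmetic difference is that the paper expands $\frac{z^{m}(1-z)}{1-z^{m}}=\sum_{j\ge1}(z^{jm}-z^{jm+1})$ starting from the intermediate expression in the proof of (b), whereas you expand $\frac{1-z^{m-1}}{1-z^{m}}$ and reindex, arriving at the identical sum.
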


\begin{proof}
Part (a) follows immediately from Theorem \ref{t-gjcmipk} and Lemma
\ref{l-incR}. Setting $s=0$ in part (a), we obtain {\allowdisplaybreaks
\begin{align}
 \frac{1}{1-t}+\frac{1+t}{2(1-t)}\sum_{n=1}^{\infty}P_{12\cdots m,n}^{\ipk}(u)z^{n}
 &= \sum_{n=0}^{\infty}\Bigg(\frac{2tx}{(1-t)^{2}}-\frac{2tz^{m}}{(1-t^{2})(1+z+z^{2}+\cdots+z^{m-1})}\Bigg)^{*\left\langle n\right\rangle }\nonumber \\
 &= \sum_{n=0}^{\infty}\Bigg(\frac{2tz}{1-t^{2}}-\frac{2tz^{m}(1-z)}{(1-t^{2})(1-z^{m})}\Bigg)^{*\left\langle n\right\rangle }\label{e-monoipkmid}\\
 &= \sum_{n=0}^{\infty}\left(\frac{2tz(1-z^{m-1})}{(1-t^{2})(1-z^{m})}\right)^{*\left\langle n\right\rangle },\nonumber 
\end{align}
thus proving part (b). For part (c), we shall use the well-known identities
\[
\frac{1}{(1-t)^{n+1}}=\sum_{k=0}^{\infty}{n+k \choose k}t^{k}\qquad\text{and}\qquad(1+t)^{n}=\sum_{k=0}^{n}{n \choose k}t^{k},
\]
which imply 
\begin{align*}
\frac{2t(1+t)^{jm-1}}{(1-t)^{jm+1}} & =\sum_{k=1}^{\infty}c_{m,j,k}t^{k}\qquad\text{and}\qquad\frac{2t(1+t)^{jm}}{(1-t)^{jm+2}}=\sum_{k=1}^{\infty}c_{m,j,k}^{\prime}t^{k}.
\end{align*}
Then, continuing from (\ref{e-monoipkmid}), we have 
\begin{align*}
 & \frac{1}{1-t}+\frac{1+t}{2(1-t)}\sum_{n=1}^{\infty}P_{12\cdots m,n}^{\ipk}(u)z^{n}\\
 & \qquad\qquad\qquad=\sum_{n=0}^{\infty}\Bigg(\frac{2tx}{(1-t)^{2}}-\frac{2tz^{m}(1-z)}{(1-t^{2})(1-z^{m})}\Bigg)^{*\left\langle n\right\rangle }\\
 & \qquad\qquad\qquad=\sum_{n=0}^{\infty}\left(\frac{2tx}{(1-t)^{2}}-\frac{2t}{1-t^{2}}\sum_{j=1}^{\infty}(z^{jm}-z^{jm+1})\right)^{*\left\langle n\right\rangle }\\
 & \qquad\qquad\qquad=\sum_{n=0}^{\infty}\left(\frac{2tx}{(1-t)^{2}}-\sum_{j=1}^{\infty}\left(\frac{2t(1+t)^{jm-1}x^{jm}}{(1-t)^{jm+1}}-\frac{2t(1+t)^{jm}x^{jm+1}}{(1-t)^{jm+2}}\right)\right)^{*\left\langle n\right\rangle }\\
 & \qquad\qquad\qquad=\sum_{n=0}^{\infty}\left(\sum_{k=1}^{\infty}2kxt^{k}-\sum_{j=1}^{\infty}\sum_{k=1}^{\infty}(c_{m,j,k}x^{jm}-c_{m,j,k}^{\prime}x^{jm+1})t^{k}\right)^{*\left\langle n\right\rangle }\\
 & \qquad\qquad\qquad=\sum_{n=0}^{\infty}\left(\sum_{k=1}^{\infty}\left(2kx-\sum_{j=1}^{\infty}(c_{m,j,k}x^{jm}-c_{m,j,k}^{\prime}x^{jm+1})\right)t^{k}\right)^{*\left\langle n\right\rangle }\\
 & \qquad\qquad\qquad=\frac{1}{1-t}+\sum_{k=1}^{\infty}\sum_{n=1}^{\infty}\left(2kx-\sum_{j=1}^{\infty}(c_{m,j,k}x^{jm}-c_{m,j,k}^{\prime}x^{jm+1})\right)^{n}t^{k}\\
 & \qquad\qquad\qquad=1+\sum_{k=1}^{\infty}\sum_{n=0}^{\infty}\left(2kx-\sum_{j=1}^{\infty}(c_{m,j,k}x^{jm}-c_{m,j,k}^{\prime}x^{jm+1})\right)^{n}t^{k}\\
 & \qquad\qquad\qquad=1+\sum_{k=1}^{\infty}\left[1-2kx+\sum_{j=1}^{\infty}(c_{m,j,k}x^{jm}-c_{m,j,k}^{\prime}x^{jm+1})\right]^{-1}t^{k};
\end{align*}
this completes the proof.}
\end{proof}
In order to use Theorem \ref{t-monoipk} to compute the polynomials
$P_{12\cdots m,n}^{\ipk}(s,t)$ and $P_{12\cdots m,n}^{\ipk}(t)$,
one must ``invert'' the expression $u=4t/(1+t)^{2}$. Let us first
replace the variable $t$ with $v$, and $u$ with $t$, to obtain
$t=4v/(1+v)^{2}$. Then, solving $t=4v/(1+v)^{2}$ for $v$ yields
$v=2t^{-1}(1-\sqrt{1-t})-1$. Thus, Theorem \ref{t-monoipk} (b) is
equivalent to
\begin{alignat*}{1}
\frac{1}{1-v}+\frac{1+v}{2(1-v)}\sum_{n=1}^{\infty}P_{12\cdots m,n}^{\ipk}(t)z^{n} & =\sum_{n=0}^{\infty}\left.\left(\frac{2tz(1-z^{m-1})}{(1-t^{2})(1-z^{m})}\right)^{*\left\langle n\right\rangle }\right|_{t\mapsto v}
\end{alignat*}
where $z=(1+t)x/(1-t)$ and $v=2t^{-1}(1-\sqrt{1-t})-1$. (Note that
substitution does not commute with Hadamard product, so we cannot
simply replace $t$ with $v$ inside the Hadamard product.) With some
additional algebraic manipulations, we get the formula
\[
\sum_{n=1}^{\infty}P_{12\cdots m,n}^{\ipk}(t)x^{n}=\frac{2(1-v)}{1+v}{\displaystyle \sum_{n=0}^{\infty}}\left.\left(\frac{2tz(1-z^{m-1})}{(1-t^{2})(1-z^{m})}\right)^{*\left\langle n\right\rangle }\right|_{x\mapsto(1-t)x/(1+t),\,t\mapsto v}-\frac{2}{1+v}
\]
where $z$ and $v$ are the same as above; this formula can be used
to compute the polynomials $P_{12\cdots m,n}^{\ipk}(t)$. We can carry
out a similar process with Theorem \ref{t-monoipk} (a) and (c), as
well as with Theorems \ref{t-incilpk}, \ref{t-decilpk}, \ref{t-transipk},
and \ref{t-transilpk} appearing later in this paper.

Tables 3\textendash 4 list the first ten polynomials $P_{12\cdots m,n}^{\ipk}(t)$
for $m=3$ and $m=4$.

\renewcommand{\arraystretch}{1.2}

\begin{table}[H]

\begin{centering}
\begin{tabular}{c|ccc|c}
$n$ & $P_{123,n}^{\ipk}(t)$ &  & $n$ & $P_{123,n}^{\ipk}(t)$\tabularnewline
\cline{1-2} \cline{2-2} \cline{4-5} \cline{5-5} 
$0$ & 1 &  & $5$ & $8t+52t^{2}+10t^{3}$\tabularnewline
$1$ & $t$ &  & $6$ & $13t+200t^{2}+136t^{3}$\tabularnewline
$2$ & $2t$ &  & $7$ & $21t+714t^{2}+1170t^{3}+112t^{4}$\tabularnewline
$3$ & $3t+2t^{2}$ &  & $8$ & $34t+2468t^{2}+8180t^{3}+2676t^{4}$\tabularnewline
$4$ & $5t+12t^{2}$ &  & $9$ & $55t+8348t^{2}+50786t^{3}+37978t^{4}+2210t^{5}$\tabularnewline
\end{tabular}\vspace{5bp}
\caption{Distribution of $\protect\ipk$ over $\mathfrak{S}_{n}(123)$}
\par\end{centering}
\end{table}

\begin{table}[H]
\centering{}%
\begin{tabular}{c|ccc|c}
$n$ & $P_{1234,n}^{\ipk}(t)$ &  & $n$ & $P_{1234,n}^{\ipk}(t)$\tabularnewline
\cline{1-2} \cline{2-2} \cline{4-5} \cline{5-5} 
$0$ & 1 &  & $5$ & $13t+82t^{2}+16t^{3}$\tabularnewline
$1$ & $t$ &  & $6$ & $24t+364t^{2}+254t^{3}$\tabularnewline
$2$ & $2t$ &  & $7$ & $44t+1502t^{2}+2553t^{3}+248t^{4}$\tabularnewline
$3$ & $4t+2t^{2}$ &  & $8$ & $81t+5976t^{2}+20436t^{3}+6840t^{4}$\tabularnewline
$4$ & $7t+16t^{2}$ &  & $9$ & $149t+23286t^{2}+146636t^{3}+112192t^{4}+6638t^{5}$\tabularnewline
\end{tabular}\vspace{5bp}
\caption{Distribution of $\protect\ipk$ over $\mathfrak{S}_{n}(1234)$}
\end{table}

The linear coefficients of $P_{123,n}^{\ipk}(t)$ are Fibonacci numbers
\cite[A000045]{oeis}, and those of $P_{1234,n}^{\ipk}(t)$ are tribonacci
numbers \cite[A000073]{oeis}.\footnote{Note that \cite{oeis} uses a different indexing for these sequences.}
In fact, we can make a more general statement relating the linear
coefficients of $P_{123,n}^{\ipk}(t)$ to ``higher-order'' Fibonacci
numbers. The \textit{Fibonacci sequence of order $k$} (also called
the $k$-\textit{generalized Fibonacci sequence}) $\{f_{n}^{(k)}\}_{n\geq0}$
is defined by the recursion 
\[
f_{n}^{(k)}\coloneqq f_{n-1}^{(k)}+f_{n-2}^{(k)}+\cdots+f_{n-k}^{(k)}
\]
with $f_{0}^{(k)}\coloneqq1$ (and where we treat $f_{n}^{(k)}$ as
0 for $n<0$). Hence, the Fibonacci sequence of order two is the usual
Fibonacci sequence, and the Fibonacci sequence of order three is the
tribonacci sequence. We give three proofs of the following claim in
\cite{Zhuang2021a}.
\begin{claim}
\label{cl-ipk}Let $n\geq1$ and $m\geq3$. The number of permutations
$\pi$ in $\mathfrak{S}_{n}(12\cdots m)$ with $\ipk(\pi)=0$ is equal
to the $(m-1)$th order Fibonacci number $f_{n}^{(m-1)}$.
\end{claim}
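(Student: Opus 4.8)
The plan is to prove this bijectively, in the spirit of the proof of Proposition~\ref{p-123ides}. Setting $\tau = \pi^{-1}$ gives a bijection on $\mathfrak{S}_n$, and since $\ipk(\pi) = \pk(\pi^{-1}) = \pk(\tau)$ while $\pi$ avoids the consecutive pattern $12\cdots m$ exactly when $\tau^{-1} = \pi$ does, the claim counts permutations $\tau \in \mathfrak{S}_n$ with $\pk(\tau) = 0$ and $\tau^{-1}$ avoiding $12\cdots m$. First I would characterize $\pk(\tau) = 0$: a peak is an ascent immediately followed by a descent at an interior position, so $\pk(\tau) = 0$ forces every ascent of $\tau$ at a position $\leq n-2$ to be followed by another ascent; equivalently, $\Des(\tau)$ is an initial segment $\{1,2,\dots,j-1\}$ of $[n-1]$. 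Such a $\tau$ has the V-shape $\tau_1 > \cdots > \tau_j < \cdots < \tau_n$ with $\tau_j = 1$, and is completely determined by the set $S \subseteq \{2,3,\dots,n\}$ of values preceding the $1$ (listed in decreasing order on the left branch, with the complement listed in increasing order on the right branch). So these $\tau$ are in bijection with subsets of $\{2,\dots,n\}$.

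The main step is then to determine, for $S = \{s_1 < s_2 < \cdots < s_k\}$, the reading sequences of the associated V-shaped permutation $\tau_S$, so that I can apply the fact (used in proving Proposition~\ref{p-123ides}) that $\tau_S^{-1}$ avoids $12\cdots m$ if and only if every reading sequence of $\tau_S$ has length less than $m$. Writing down the position function of $\tau_S$ — larger elements of $S$ sit further left on the decreasing branch, and the non-$S$ values sit in increasing order on the right branch — a short analysis of the four cases for whether $v-1$ and $v$ lie on the same branch shows that a new reading sequence of $\tau_S$ begins precisely at $v = 1$ and at each $v \in S$. Hence the reading sequences of $\tau_S$ are the blocks of consecutive integers $\{1,\dots,s_1-1\},\{s_1,\dots,s_2-1\},\dots,\{s_k,\dots,n\}$, whose lengths $(s_1-1,\, s_2-s_1,\, \dots,\, n-s_k+1)$ form a composition of $n$. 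The assignment $S \mapsto (s_1-1,\, s_2-s_1,\, \dots,\, n-s_k+1)$ is a bijection from subsets of $\{2,\dots,n\}$ to compositions of $n$, and under it the condition ``every reading sequence of $\tau_S$ has length less than $m$'' translates to ``every part is at most $m-1$.'' Thus the permutations counted by the claim are in bijection with compositions of $n$ into parts of size at most $m-1$.

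To finish, the number of compositions of $n$ into parts of size at most $m-1$ has generating function $(1 - x - x^2 - \cdots - x^{m-1})^{-1}$, hence satisfies the recurrence $f_n = f_{n-1} + f_{n-2} + \cdots + f_{n-(m-1)}$ with $f_0 = 1$ and $f_n = 0$ for $n < 0$; this is exactly the defining recurrence of the $(m-1)$th order Fibonacci numbers, so the count equals $f_n^{(m-1)}$, as claimed. I expect the main obstacle to be the reading-sequence computation in the second paragraph — in particular, bookkeeping the position function of $\tau_S$ and correctly handling the boundary cases $S = \emptyset$, $s_1 = 2$, and $s_k = n$. (An alternative, more computational proof can be extracted from Theorem~\ref{t-monoipk}(c) by isolating the coefficient of $t^1$, though this requires the variable change inverting $u = 4t/(1+t)^2$ and is less transparent.)
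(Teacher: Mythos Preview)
Your argument is correct. The characterization of peakless permutations as V-shaped permutations $\tau_S$ indexed by subsets $S\subseteq\{2,\dots,n\}$ is standard, and your reading-sequence computation is right: a new reading sequence of $\tau_S$ begins at $v$ precisely when the position of $v$ lies to the left of the position of $v-1$, and checking the four cases (each of $v,v-1$ either in $S$ or not) confirms this happens exactly when $v=1$ or $v\in S$. The bijection $S\mapsto(s_1-1,s_2-s_1,\dots,n-s_k+1)$ to compositions of $n$ is the usual one, and the final identification with $(m-1)$th order Fibonacci numbers via the generating function $(1-x-\cdots-x^{m-1})^{-1}$ is routine. The boundary cases you flag ($S=\emptyset$, $s_1=2$, $s_k=n$) all behave correctly under the general analysis.

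As for comparison: the paper does \emph{not} prove Claim~\ref{cl-ipk}. It explicitly defers the proof to the forthcoming paper \cite{Zhuang2021a} (see the sentence immediately preceding the claim, and also the end of Section~1.2), where three proofs are promised. So there is no in-paper argument to compare yours against. Your approach is very much in the spirit of the paper's own proof of Proposition~\ref{p-123ides}, which also passes to the inverse and uses reading sequences; it is plausible that one of the three deferred proofs is essentially the one you give here. The alternative you mention---extracting the linear coefficient in $t$ from Theorem~\ref{t-monoipk}(c)---would also work and is likely another of the three.
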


\subsection{Monotone patterns and inverse left peak number}

Finally, we produce analogous formulas for the inverse left peak polynomials
$P_{12\cdots m,n}^{\ilpk}(s,t)$ and $P_{12\cdots m,n}^{\ilpk}(t)$.
We omit the proofs of these formulas, as they follow essentially the
same steps as the proof of Theorem \ref{t-monoipk}.
\begin{thm}
\label{t-incilpk} Let $m\geq2$. We have \leqnomode
\begin{alignat*}{1}
\tag{{a}}\frac{1}{1-t}\sum_{n=0}^{\infty}P_{12\cdots m,n}^{\ilpk}(s,u)z^{n} & =\sum_{n=0}^{\infty}\Bigg(\frac{z}{1-t}+\frac{(s-1)z^{m}}{(1-t)(1-(s-1)\sum_{l=1}^{m-1}z^{l})}\Bigg)^{*\left\langle n\right\rangle },
\end{alignat*}
\begin{align*}
\tag{{b}}\frac{1}{1-t}\sum_{n=0}^{\infty}P_{12\cdots m,n}^{\ilpk}(u)z^{n} & =\sum_{n=0}^{\infty}\left(\frac{z(1-z^{m-1})}{(1-t)(1-z^{m})}\right)^{*\left\langle n\right\rangle },
\end{align*}
and
\begin{align*}
\tag{{c}}\frac{1}{1-t}\sum_{n=0}^{\infty}P_{12\cdots m,n}^{\ilpk}(u)z^{n} & =\sum_{k=0}^{\infty}\left[\sum_{j=0}^{\infty}(d_{m,j,k}x^{jm}-d_{m,j,k}^{\prime}x^{jm+1})\right]^{-1}t^{k},
\end{align*}
where $u=4t/(1+t)^{2}$, $z=(1+t)x/(1-t)$, and
\[
d_{m,j,k}=\sum_{l=0}^{k}{l+jm \choose l}{jm \choose k-l}\qquad\text{and}\qquad d_{m,j,k}^{\prime}=\sum_{l=0}^{k}{l+jm+1 \choose l}{jm+1 \choose k-l}.
\]
\end{thm}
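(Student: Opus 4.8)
The plan is to mimic the proof of Theorem~\ref{t-monoipk} essentially line for line, replacing the homomorphism $\Psi_{\ipk}$ and Theorem~\ref{t-gjcmipk} by $\Psi_{\ilpk}$ and Theorem~\ref{t-gjcmilpk}. Concretely, I would apply Theorem~\ref{t-gjcmilpk} with $\Gamma=\{12\cdots m\}$ and then use Lemma~\ref{l-incR} to evaluate the refined cluster series appearing on its right-hand side. Since Lemma~\ref{l-incR} gives $\sum_{k\geq2}R_{12\cdots m,k}^{\ilpk}(s-1,u)z^{k}=(s-1)z^{m}\big/\big(1-(s-1)\sum_{l=1}^{m-1}z^{l}\big)$ --- an expression not involving the deformation variable, so the substitution $t\mapsto u$ inside it is vacuous --- plugging this into Theorem~\ref{t-gjcmilpk} and rewriting $\tfrac{1}{1-t}\cdot\tfrac{A}{B}=\tfrac{A}{(1-t)B}$ yields part~(a) immediately.

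For part~(b) I would set $s=0$ in part~(a); the expression then iterated under the Hadamard power becomes $\tfrac{z}{1-t}-\tfrac{z^{m}}{(1-t)(1+z+\cdots+z^{m-1})}$. Replacing $1+z+\cdots+z^{m-1}$ by $(1-z^{m})/(1-z)$, combining the two terms over the common denominator $(1-t)(1-z^{m})$, and simplifying the numerator $z(1-z^{m})-z^{m}(1-z)=z-z^{m}$ collapses this to $z(1-z^{m-1})\big/\big((1-t)(1-z^{m})\big)$, which is part~(b).

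Part~(c) is the only part requiring genuine computation, and it follows the corresponding portion of the proof of Theorem~\ref{t-monoipk}. Starting from the identity $\tfrac{z^{m}(1-z)}{1-z^{m}}=\sum_{j\geq1}(z^{jm}-z^{jm+1})$, I would rewrite the part~(b) integrand as $\tfrac{z}{1-t}-\tfrac{1}{1-t}\sum_{j\geq1}(z^{jm}-z^{jm+1})$ and then pass to the Hadamard inverse using $\sum_{n\geq0}g^{*\langle n\rangle}=\big(\tfrac{1}{1-t}-g\big)^{*\langle-1\rangle}$, which amounts to taking, for each $k$, the ordinary inverse in $x$ of the coefficient of $t^{k}$. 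After substituting $z=(1+t)x/(1-t)$, each summand becomes $(1+t)^{jm}x^{jm}/(1-t)^{jm+1}$ or $(1+t)^{jm+1}x^{jm+1}/(1-t)^{jm+2}$, and expanding these via $(1+t)^{a}=\sum_{p}\binom{a}{p}t^{p}$ and $(1-t)^{-b}=\sum_{q}\binom{b-1+q}{q}t^{q}$ produces precisely the Vandermonde-type sums $d_{m,j,k}$ and $d_{m,j,k}'$ as the coefficients of $t^{k}$. Finally one checks that the leftover piece $\tfrac{1}{1-t}-\tfrac{(1+t)x}{(1-t)^{2}}$ contributes $1-(2k+1)x$ to the coefficient of $t^{k}$ (and $1-x$ when $k=0$), which equals $d_{m,0,k}x^{0}-d_{m,0,k}'x^{1}$ because $d_{m,0,k}=1$ and $d_{m,0,k}'=2k+1$ (respectively $1$ when $k=0$); absorbing this into the $j$-sum and inverting gives the bracket in part~(c). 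The main obstacle is purely organizational --- keeping the exponent shifts straight so that the $j=0$ term of the bracket simultaneously encodes the ``$1$'' coming from the geometric-series inversion and the linear-in-$x$ contribution of the $z/(1-t)$ term --- and no new idea beyond those already used for Theorem~\ref{t-monoipk} is required.
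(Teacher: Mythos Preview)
Your proposal is correct and follows exactly the approach the paper indicates: the paper omits the proof of Theorem~\ref{t-incilpk} entirely, stating only that it ``follow[s] essentially the same steps as the proof of Theorem~\ref{t-monoipk},'' which is precisely what you carry out by applying Theorem~\ref{t-gjcmilpk} together with Lemma~\ref{l-incR} and then specializing and expanding. Your bookkeeping for part~(c), including the observation that the $j=0$ term of the bracket absorbs the $1-(2k+1)x$ contribution via $d_{m,0,k}=1$ and $d_{m,0,k}'=2k+1$, is accurate.
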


In Tables 5\textendash 6, we display the first ten polynomials $P_{12\cdots m,n}^{\ilpk}(t)$
for $m=3$ and $m=4$.

\renewcommand{\arraystretch}{1.2}

\begin{table}[H]
\centering{}%
\begin{tabular}{c|ccc|c}
$n$ & $P_{123,n}^{\ilpk}(t)$  &  & $n$ & $P_{123,n}^{\ilpk}(t)$\tabularnewline
\cline{1-2} \cline{2-2} \cline{4-5} \cline{5-5} 
$0$ & 1 &  & $5$ & $27t+43t^{2}$\tabularnewline
$1$ & 1 &  & $6$ & $63t+248t^{2}+38t^{3}$\tabularnewline
$2$ & $1+t$ &  & $7$ & $144t+1225t^{2}+648t^{3}$\tabularnewline
$3$ & $5t$ &  & $8$ & $333t+5591t^{2}+6882t^{3}+552t^{4}$\tabularnewline
$4$ & $12t+5t^{2}$ &  & $9$ & $765t+24304t^{2}+58552t^{3}+15756t^{4}$\tabularnewline
\end{tabular}\vspace{5bp}
\caption{Distribution of $\protect\ilpk$ over $\mathfrak{S}_{n}(123)$}
\end{table}

\begin{table}[H]
\centering{}%
\begin{tabular}{c|ccc|c}
$n$ & $P_{1234,n}^{\ilpk}(t)$ &  & $n$ & $P_{1234,n}^{\ilpk}(t)$\tabularnewline
\cline{1-2} \cline{2-2} \cline{4-5} \cline{5-5} 
$0$ & 1 &  & $5$ & $50t+61t^{2}$\tabularnewline
$1$ & 1 &  & $6$ & $138t+443t^{2}+61t^{3}$\tabularnewline
$2$ & $1+t$ &  & $7$ & $378t+2659t^{2}+1289t^{3}$\tabularnewline
$3$ & $1+5t$ &  & $8$ & $1042t+14501t^{2}+16524t^{3}+1266t^{4}$\tabularnewline
$4$ & $18t+5t^{2}$ &  & $9$ & $2866t+74941t^{2}+167780t^{3}+43314t^{4}$\tabularnewline
\end{tabular}\vspace{5bp}
\caption{Distribution of $\protect\ilpk$ over $\mathfrak{S}_{n}(1234)$}
\end{table}

Unfortunately, we cannot use symmetries to translate Theorem \ref{t-incilpk}
into a result about the pattern $m\cdots21$. However, we can obtain
an analogous result for the polynomials $P_{m\cdots21,n}^{\ilpk}(s,t)$
and $P_{m\cdots21,n}^{\ilpk}(t)$ separately; this is given below.
The proof is omitted but follows the same general line of reasoning
as in the previous few theorems. We only note that the underlying
permutation of any $m\cdots21$-cluster has exactly one left peak
(rather than having no left peaks as for $12\cdots m$-clusters),
which results in the generating function 
\[
\sum_{k=2}^{\infty}R_{m\cdots21,k}^{\ilpk}(s,t)x^{k}=\frac{stx^{m}}{1-s\sum_{l=1}^{m-1}x^{l}}
\]
for the refined cluster polynomials $R_{m\cdots21,n}^{\ilpk}(s,t)$.
\begin{thm}
\label{t-decilpk} Let $m\geq2$. We have \leqnomode
\begin{multline*}
\tag{{a}}\qquad\frac{1}{1-t}\sum_{n=0}^{\infty}P_{m\cdots21,n}^{\ilpk}(s,u)z^{n}\\
=\sum_{n=0}^{\infty}\Bigg(\frac{z}{1-t}+\frac{4t(s-1)z^{m}}{(1-t^{2})(1+t)(1-(s-1)\sum_{l=1}^{m-1}z^{l})}\Bigg)^{*\left\langle n\right\rangle },\qquad
\end{multline*}
\begin{align*}
\tag{{b}}\frac{1}{1-t}\sum_{n=0}^{\infty}P_{m\cdots21,n}^{\ilpk}(u)z^{n} & =\sum_{n=0}^{\infty}\left(\frac{(1+t)^{2}z-4tz^{m}-(1-t)^{2}z^{m+1}}{(1-t^{2})(1+t)(1-z^{m})}\right)^{*\left\langle n\right\rangle },
\end{align*}
and
\begin{multline*}
\tag{{c}}\qquad\frac{1}{1-t}\sum_{n=0}^{\infty}P_{m\cdots21,n}^{\ilpk}(u)z^{n}\\
=\frac{1}{1-x}+\sum_{k=1}^{\infty}\left[1-(2k+1)x+\sum_{j=1}^{\infty}(e_{m,j,k}x^{jm}-e_{m,j,k}^{\prime}x^{jm+1})\right]^{-1}t^{k},\qquad
\end{multline*}
where $u=4t/(1+t)^{2}$, $z=(1+t)x/(1-t)$, and
\[
e_{m,j,k}=4\sum_{l=1}^{k}{l+jm-1 \choose l-1}{jm-2 \choose k-l}\quad\text{and}\quad e_{m,j,k}^{\prime}=4\sum_{l=1}^{k}{l+jm \choose l-1}{jm-1 \choose k-l}.
\]
\end{thm}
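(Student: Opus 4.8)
The plan is to prove all three parts by imitating the proof of Theorem~\ref{t-monoipk}, with the homomorphism $\Psi_{\ipk}$ and Theorem~\ref{t-gjcmipk} replaced by $\Psi_{\ilpk}$ and Theorem~\ref{t-gjcmilpk}, and with Lemma~\ref{l-incR} replaced by the cluster generating function
\[
\sum_{k=2}^{\infty}R_{m\cdots21,k}^{\ilpk}(s,t)x^{k}=\frac{stx^{m}}{1-s\sum_{l=1}^{m-1}x^{l}}
\]
recorded above. The factor of $t$ here (absent in the analogous formula for $12\cdots m$) reflects the fact that the underlying permutation of an $m\cdots21$-cluster is strictly decreasing, so that its inverse is likewise strictly decreasing and has exactly one left peak; hence every such cluster contributes $t^{\ilpk}=t$.

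For part~(a), I would apply $\Psi_{\ilpk}$ to Theorem~\ref{t-gjcmfqsym} with $\Gamma=\{m\cdots21\}$, exactly as in the proof of Theorem~\ref{t-gjcmilpk}, and then substitute $\sum_{k\geq2}R_{m\cdots21,k}^{\ilpk}(s-1,u)z^{k}=(s-1)uz^{m}/(1-(s-1)\sum_{l=1}^{m-1}z^{l})$, simplifying $u/(1-t)=4t/((1-t^{2})(1+t))$ via $u=4t/(1+t)^{2}$ and $(1-t^{2})(1+t)=(1-t)(1+t)^{2}$. For part~(b), I would set $s=0$ in~(a): then $1-(s-1)\sum_{l=1}^{m-1}z^{l}$ becomes $\sum_{l=0}^{m-1}z^{l}=(1-z^{m})/(1-z)$, so the second summand inside the Hadamard product becomes $-4tz^{m}(1-z)/\bigl((1-t^{2})(1+t)(1-z^{m})\bigr)$; putting the two summands over the common denominator $(1-t^{2})(1+t)(1-z^{m})$ and using $(1-t^{2})(1+t)/(1-t)=(1+t)^{2}$ together with $(1+t)^{2}-4t=(1-t)^{2}$ collapses the numerator to $(1+t)^{2}z-4tz^{m}-(1-t)^{2}z^{m+1}$, which is~(b).

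For part~(c), I would expand the expression inside the Hadamard product as a power series in $t$ with coefficients in $\mathbb{Q}[[x]]$, starting from the intermediate form $\frac{z}{1-t}-\frac{4tz^{m}(1-z)}{(1-t^{2})(1+t)(1-z^{m})}$ that appears en route to~(b). Writing $\frac{z}{1-t}=\frac{(1+t)x}{(1-t)^{2}}$ and using $\frac{1}{(1-t)^{2}}=\sum_{k\geq0}(k+1)t^{k}$ gives $\frac{z}{1-t}=x+\sum_{k\geq1}(2k+1)x\,t^{k}$, which produces both the leading $\frac{1}{1-x}$ (from the $t^{0}$-coefficient $x$) and the term $(2k+1)x$ in~(c). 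For the other summand, $\frac{z^{m}(1-z)}{1-z^{m}}=\sum_{j\geq1}(z^{jm}-z^{jm+1})$, and substituting $z=\frac{(1+t)x}{1-t}$ turns it into $\sum_{j\geq1}\bigl(\frac{4t(1+t)^{jm-2}x^{jm}}{(1-t)^{jm+1}}-\frac{4t(1+t)^{jm-1}x^{jm+1}}{(1-t)^{jm+2}}\bigr)$; extracting the coefficient of $t^{k}$ with $\frac{1}{(1-t)^{N}}=\sum_{l\geq0}\binom{N-1+l}{l}t^{l}$ and $(1+t)^{M}=\sum_{l}\binom{M}{l}t^{l}$ yields precisely $e_{m,j,k}$ and $e_{m,j,k}'$. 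Hence the bracketed quantity inside the Hadamard product equals $x+\sum_{k\geq1}\bigl((2k+1)x-\sum_{j\geq1}(e_{m,j,k}x^{jm}-e_{m,j,k}'x^{jm+1})\bigr)t^{k}$, and the identity $\sum_{n\geq0}f^{*\left\langle n\right\rangle}=\sum_{k\geq0}(1-[t^{k}]f)^{-1}t^{k}$ gives~(c).

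The routine but slightly delicate step is the last one: assembling the double binomial sums $e_{m,j,k}$, $e_{m,j,k}'$ correctly, and checking that all the geometric-series expansions are legitimate as formal power series — which they are, since every relevant $t^{k}$-coefficient has strictly positive $x$-valuation and, for each fixed power of $x$, only finitely many $j$ contribute. None of this is conceptually harder than the corresponding computation in the proof of Theorem~\ref{t-monoipk}(c), so I anticipate only bookkeeping rather than a genuine obstacle.
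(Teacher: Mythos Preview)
Your proposal is correct and follows exactly the approach the paper outlines: the paper omits the proof but says it ``follows the same general line of reasoning as in the previous few theorems,'' using Theorem~\ref{t-gjcmilpk} together with the cluster generating function $\sum_{k\geq2}R_{m\cdots21,k}^{\ilpk}(s,t)x^{k}=stx^{m}/(1-s\sum_{l=1}^{m-1}x^{l})$, which is precisely what you do. Your derivations of (a), (b), and (c)\textemdash including the common-denominator manipulation for (b) and the extraction of the coefficients $e_{m,j,k}$, $e_{m,j,k}'$ for (c)\textemdash are all correct.
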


The first ten polynomials $P_{m\cdots21,n}^{\ilpk}(t)$ for $m=3$
and $m=4$ are given in Tables 7\textendash 8.

\renewcommand{\arraystretch}{1.2}

\begin{table}[H]
\centering{}%
\begin{tabular}{c|ccc|c}
$n$ & $P_{321,n}^{\ilpk}(t)$  &  & $n$ & $P_{321,n}^{\ilpk}(t)$ \tabularnewline
\cline{1-2} \cline{2-2} \cline{4-5} \cline{5-5} 
$0$ & 1 &  & $5$ & $1+37t+32t^{2}$\tabularnewline
$1$ & 1 &  & $6$ & $1+101t+222t^{2}+25t^{3}$\tabularnewline
$2$ & $1+t$ &  & $7$ & $1+269t+1251t^{2}+496t^{3}$\tabularnewline
$3$ & $1+4t$ &  & $8$ & $1+710t+6349t^{2}+5899t^{3}+399t^{4}$\tabularnewline
$4$ & $1+13t+3t^{2}$ &  & $9$ & $1+1865t+30186t^{2}+54825t^{3}+12500t^{4}$\tabularnewline
\end{tabular}\vspace{5bp}
\caption{Distribution of $\protect\ilpk$ over $\mathfrak{S}_{n}(321)$}
\end{table}

\begin{table}[H]
\centering{}%
\begin{tabular}{c|ccc|c}
$n$ & $P_{4321,n}^{\ilpk}(t)$  &  & $n$ & $P_{4321,n}^{\ilpk}(t)$ \tabularnewline
\cline{1-2} \cline{2-2} \cline{4-5} \cline{5-5} 
$0$ & 1 &  & $5$ & $1+53t+57t^{2}$\tabularnewline
$1$ & 1 &  & $6$ & $1+158t+428t^{2}+55t^{3}$\tabularnewline
$2$ & $1+t$ &  & $7$ & $1+462t+2668t^{2}+1195t^{3}$\tabularnewline
$3$ & $1+5t$ &  & $8$ & $1+1342t+15074t^{2}+15765t^{3}+1151t^{4}$\tabularnewline
$4$ & $1+17t+5t^{2}$ &  & $9$ & $1+3886t+80338t^{2}+164337t^{3}+40339t^{4}$\tabularnewline
\end{tabular}\vspace{5bp}
\caption{Distribution of $\protect\ilpk$ over $\mathfrak{S}_{n}(4321)$}
\end{table}

The linear coefficients of the $P_{321,n}^{\ilpk}(t)$ match OEIS
sequence A080145 \cite[A080145]{oeis}, which involves the Fibonacci
numbers $f_{n}\coloneqq f_{n}^{(2)}$. We give two proofs of this
claim in \cite{Zhuang2021a}.
\begin{claim}
\label{cl-ilpk}Let $n\geq1$. The number of permutations $\pi$ in
$\mathfrak{S}_{n}(321)$ with $\ilpk(\pi)=1$ is equal to 
\[
\sum_{i=1}^{n-1}\sum_{j=1}^{i}f_{j-1}f_{j}=f_{n-1}f_{n}-\left\lfloor \frac{n+1}{2}\right\rfloor .
\]
\end{claim}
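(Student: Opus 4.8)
The statement can be obtained generating-functionally by extracting the coefficient of $t^{1}$ in $P_{321,n}^{\ilpk}(t)$ from Theorem \ref{t-decilpk}(c) after inverting the substitution $u=4t/(1+t)^{2}$; the plan below is instead combinatorial. Since $\ilpk(\pi)=\lpk(\pi^{-1})$, setting $\sigma=\pi^{-1}$ reduces the problem to counting permutations $\sigma\in\mathfrak{S}_{n}$ with $\lpk(\sigma)=1$ whose inverse $\sigma^{-1}$ avoids the consecutive pattern $321$, equivalently, has no two consecutive descents.

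First I would use the elementary fact (illustrated by the permutation $\pi=72163584$ from Section~1.1, where $\Des(\pi)=\{1,2,4,7\}$ splits into three blocks of consecutive integers and $\lpk(\pi)=3$) that $\lpk(\sigma)$ equals the number of maximal blocks of consecutive integers in $\Des(\sigma)$, so that $\lpk(\sigma)=1$ exactly when $\Des(\sigma)$ is a single nonempty interval of positions; equivalently, $\sigma$ is increasing, then strictly decreasing along one maximal run, then increasing. Such $\sigma$ are encoded bijectively by a triple $(m,M,w)$ in which $m<M$ are the valley value and the peak value of $\sigma$ and $w\in\{1,3,5\}^{M-m-1}$ records, for each value strictly between $m$ and $M$ listed in increasing order, whether it lies in the first increasing run (letter $1$), the decreasing run (letter $3$), or the last increasing run (letter $5$)---all values below $m$ being forced into the first run and all values above $M$ into the last. (As a by-product this gives $\#\{\sigma\in\mathfrak{S}_{n}:\lpk(\sigma)=1\}=\sum_{1\le m<M\le n}3^{M-m-1}$.)

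The core step is to rephrase ``$\sigma^{-1}$ has no two consecutive descents'' in terms of $w$. A short block-versus-position analysis shows that $v$ is a descent of $\sigma^{-1}$ precisely when the run of $\sigma$ containing $v+1$ is positioned so that $v+1$ precedes $v$; translating this onto the word $\gamma=(4,w_{1},\dots,w_{M-m-1},2)$, the descents of $\sigma^{-1}$ correspond exactly to the indices $i$ with $\gamma_{i}>\gamma_{i+1}$ or $\gamma_{i}=\gamma_{i+1}=3$, and in particular all of them lie in $\{m,\dots,M-1\}$. So the $321$-avoidance of $\sigma^{-1}$ becomes the condition that these indices of $\gamma$ are pairwise non-adjacent. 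Writing $g_{\ell}$ for the number of $w\in\{1,3,5\}^{\ell}$ for which $(4,w_{1},\dots,w_{\ell},2)$ has that property, the key lemma is $g_{\ell}=f_{\ell}f_{\ell+1}$ with $f_{j}$ the Fibonacci number of the claim. I expect to prove the lemma by a transfer-matrix argument---tracking the last letter of $w$ together with whether the previous transition was one of the forbidden-if-repeated indices---which produces the linear recurrence $g_{\ell}=2g_{\ell-1}+2g_{\ell-2}-g_{\ell-3}$, and then observing that $f_{\ell}f_{\ell+1}$ satisfies the same recurrence with $g_{0}=1$, $g_{1}=2$, $g_{2}=6$.

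Assembling the pieces, the number of valid $\sigma$ is $\sum_{1\le m<M\le n}g_{M-m-1}=\sum_{j=1}^{n-1}(n-j)f_{j-1}f_{j}=\sum_{i=1}^{n-1}\sum_{j=1}^{i}f_{j-1}f_{j}$, where the middle step groups the pairs with $M-m=j$ and the last interchanges the order of summation. Finally I would derive the closed form $\sum_{i=1}^{n-1}\sum_{j=1}^{i}f_{j-1}f_{j}=f_{n-1}f_{n}-\lfloor(n+1)/2\rfloor$ by induction on $n$, which reduces to the auxiliary identity $\sum_{j=1}^{i}f_{j-1}f_{j}=f_{i}^{2}$ when $i$ is odd and $f_{i}^{2}-1$ when $i$ is even, itself immediate from $f_{i+1}-f_{i-1}=f_{i}$ together with the classical identity $f_{i-2}f_{i+1}-f_{i-1}f_{i}=(-1)^{i}$. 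The main obstacle is the core step: correctly reading off $\Des(\sigma^{-1})$ from the triple $(m,M,w)$---especially the boundary behavior at the peak and the valley, which is exactly what forces the $-\lfloor(n+1)/2\rfloor$ correction rather than a clean product---and then establishing the product formula $g_{\ell}=f_{\ell}f_{\ell+1}$, whose multiplicative shape is not visible from the definition of $g_{\ell}$ and is the combinatorial heart of the claim.
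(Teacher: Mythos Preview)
The paper does not actually prove Claim~\ref{cl-ilpk}; it explicitly defers both this claim and Claim~\ref{cl-ipk} to the companion paper \cite{Zhuang2021a}, where two proofs are given. So there is no in-paper argument to compare against.

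That said, your combinatorial plan is correct. The characterization of $\lpk(\sigma)=1$ as $\Des(\sigma)$ being a single nonempty interval is right, and your encoding by $(m,M,w)$ with $w\in\{1,3,5\}^{M-m-1}$ is a genuine bijection (the forced placement of values below $m$ and above $M$ is exactly as you describe). Your translation of $\Des(\sigma^{-1})$ into the word $\gamma=(4,w_{1},\dots,w_{M-m-1},2)$ is also correct: labeling the five position-blocks (first run, peak, descending run, valley, last run) by $1,2,3,4,5$ in left-to-right order, one has $v\in\Des(\sigma^{-1})$ iff $v+1$ precedes $v$, which happens iff the block label of $v+1$ is strictly smaller than that of $v$, or both equal $3$; and for $v<m$ or $v\ge M$ the labels go $1\to 4$ or $2\to 5$ or stay in an increasing block, so indeed no descents occur there. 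The key lemma $g_{\ell}=f_{\ell}f_{\ell+1}$ checks out for $\ell=0,1,2$ (giving $1,2,6$), and the recurrence $g_{\ell}=2g_{\ell-1}+2g_{\ell-2}-g_{\ell-3}$ has characteristic polynomial $(x^{2}-3x+1)(x+1)$, whose roots $\phi^{2},\psi^{2},-1$ are exactly those governing $f_{\ell}f_{\ell+1}$, so the identification follows once the recurrence is established. The final double-sum manipulation and the closed form via $\sum_{j=1}^{i}f_{j-1}f_{j}=f_{i}^{2}-[i\text{ even}]$ together with $\sum_{i=0}^{n-1}f_{i}^{2}=f_{n-1}f_{n}$ are standard.

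The only step you have not fully justified is that the transfer matrix genuinely yields the cubic recurrence for $g_{\ell}$; with states recording $(w_{i},\text{last transition bad?})$ and the boundary letters $4$ and $2$ handled as initial and terminal vectors, this is routine but should be written out. Your remark attributing the $-\lfloor(n+1)/2\rfloor$ correction to ``boundary behavior at the peak and the valley'' is slightly misleading: the correction arises from the parity term in $S_{i}=f_{i}^{2}-[i\text{ even}]$, which in turn traces back to the alternating sign in $f_{i-2}f_{i+1}-f_{i-1}f_{i}=(-1)^{i}$, not to any edge effect in the bijection itself.
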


\section{Transpositional patterns \texorpdfstring{$12\cdots(a-1)(a+1)a(a+2)(a+3)\cdots m$}{12...(a-1)(a+1)a(a+2)(a+3)...m}}

\subsection{Cluster generating functions for transpositional patterns}

In this section, we turn our attention to patterns of the form $\sigma=12\cdots(a-1)(a+1)a(a+2)(a+3)\cdots m$
where $m\geq5$ and $2\leq a\leq m-2$. These are precisely the elementary
transpositions $(a,a+1)$ of $\mathfrak{S}_{m}$\textemdash aside
from the transpositions $(1,2)$ and $(m-1,m)$\textemdash and form
another family of patterns for which it is straightforward to obtain
closed-form generating functions for our refined cluster polynomials.
\begin{lem}
\label{l-etR} Let $\sigma=12\cdots(a-1)(a+1)a(a+2)(a+3)\cdots m$
where $m\geq5$ and $2\leq a\leq m-2$. Let $i=\min(a,m-a)$. Then
\begin{alignat*}{1}
\sum_{k=2}^{\infty}R_{\sigma,k}^{\ides}(s,t)x^{k} & =\sum_{k=2}^{\infty}R_{\sigma,k}^{\ipk}(s,t)x^{k}=\frac{st^{2}x^{m}}{1-st\sum_{l=1}^{i}x^{m-l}}
\end{alignat*}
and
\[
\sum_{k=2}^{\infty}R_{\sigma,k}^{\ilpk}(s,t)x^{k}=\frac{stx^{m}}{1-st\sum_{l=1}^{i}x^{m-l}}.
\]
\end{lem}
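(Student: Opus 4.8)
The plan is to follow the blueprint of the proof of Lemma~\ref{l-incR}: first pin down the overlap set of $\sigma$, then describe the $\sigma$-clusters and their underlying permutations explicitly, and finally read off the three generating functions. Throughout, write $r$ for the number of marked occurrences of a cluster.

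I would begin by computing $O_\sigma$ directly. The prefix $\sigma_1\sigma_2\cdots\sigma_k$ standardizes to the identity when $k\le a$ and to a permutation whose only descent is at position $a$ when $k\ge a+1$; dually, the suffix $\sigma_{m-k+1}\cdots\sigma_m$ standardizes to the identity exactly when $k\le m-a$ and otherwise has a descent. Comparing prefixes and suffixes of the same length $k$ then shows that $\std(\sigma_{j+1}\cdots\sigma_m)=\std(\sigma_1\cdots\sigma_{m-j})$ holds precisely when $m-j\le i$, so
\[
O_\sigma=\{m-i,\,m-i+1,\,\dots,\,m-1\},\qquad i=\min(a,m-a).
\]
As in Lemma~\ref{l-incR}, it follows that every $\sigma$-cluster is obtained by starting from a single copy of $\sigma$ and then repeatedly appending a new occurrence of $\sigma$ overlapping the previous one in a block of length $l$ with $l\in\{1,\dots,i\}$ --- that is, appending $m-l$ new letters --- with each step creating exactly one additional marked occurrence. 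Since $l\le i\le\min(a,m-a)$, the overlap block is the increasing prefix of length $l\le a$ of the new copy, which coincides with the increasing suffix of length $l\le m-a$ of the old copy; hence the gluing is consistent, and the relative-order constraints carried by the marked occurrences chain into a total order on the letters. Consequently an $r$-fold $\sigma$-cluster is determined by its overlap sequence $(l_1,\dots,l_{r-1})\in\{1,\dots,i\}^{r-1}$, and its underlying permutation $\pi$ is the identity with the adjacent transpositions $(c_1,c_1+1),\dots,(c_r,c_r+1)$ applied, where $c_1=a$ and $c_{j+1}=c_j+(m-l_j)$. Here $c_1\ge 2$ and $c_{j+1}-c_j=m-l_j\ge m-i\ge 2$, so the transposed pairs are disjoint and no two of the $c_j$ are adjacent. (This cluster description is essentially already implicit in the analysis of chain patterns in \cite{Elizalde2012}.)

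Because $\pi$ is a product of disjoint adjacent transpositions, it is an involution, so $\ides(\pi)=\des(\pi)$, $\ipk(\pi)=\pk(\pi)$, and $\ilpk(\pi)=\lpk(\pi)$; and since the $c_j$ are pairwise non-adjacent with $c_1\ge 2$, each $c_j$ is simultaneously a descent, a peak, and a left peak of $\pi$, with no others. Hence $\ides(\pi)=\ipk(\pi)=\ilpk(\pi)=r$. Therefore the contribution of an $r$-fold cluster with overlap sequence $(l_1,\dots,l_{r-1})$ to $\sum_{k\ge 2} R_{\sigma,k}^{\ides}(s,t)x^k=\sum_{k\ge 2} R_{\sigma,k}^{\ipk}(s,t)x^k$ is $s^r t^{r+1}x^m\prod_{j=1}^{r-1}x^{m-l_j}$, and summing over $r\ge1$ and over all overlap sequences gives the geometric series
\[
\sum_{k=2}^\infty R_{\sigma,k}^{\ides}(s,t)x^k=st^2x^m\sum_{q\ge 0}\Bigl(st\sum_{l=1}^{i}x^{m-l}\Bigr)^{q}=\frac{st^2x^m}{1-st\sum_{l=1}^{i}x^{m-l}}.
\]
For $\ilpk$ the weight attached to a cluster is $t^{\ilpk(\pi)}=t^{r}$ rather than $t^{\ipk(\pi)+1}=t^{r+1}$, which simply removes one factor of $t$ and yields $stx^m/(1-st\sum_{l=1}^{i}x^{m-l})$.

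The main obstacle is the second paragraph: one must check carefully that the overlapping marked occurrences really do force the claimed total order (so that the underlying permutation is exactly the stated involution, with no extra descents), and that a $\sigma$-cluster must in fact mark all of the $\sigma$-occurrences of its underlying permutation; the length-and-overlap bookkeeping also needs care at the extreme overlap $l=i$, which abuts the descent of a copy when $i=m-a$. The overlap-set computation itself is routine but requires a careful case analysis over the ranges of $k$.
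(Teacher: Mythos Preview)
Your proposal is correct and follows essentially the same approach as the paper's proof: determine the overlap set, build clusters by successive overlapping copies of $\sigma$, observe that the underlying permutation is a product of disjoint adjacent transpositions (hence an involution with $\ides=\ipk=\ilpk$ equal to the number of marked occurrences), and sum the resulting geometric series. The only notable difference is presentational: the paper splits into the cases $m-a\le a$ and $a\le m-a$ and treats them separately (noting the small bookkeeping adjustment in the second case when appending $m-a$ letters), whereas you handle both at once via $i=\min(a,m-a)$ and give a slightly more explicit description of the descent positions $c_j$.
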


\begin{proof}
Let us first assume that $m-a\leq a$; then the overlap set of $\sigma$
is given by $O_{\sigma}=\{a,a+1,\dots,m-1\}$. In this case, we can
uniquely generate $\sigma$-clusters by first taking the permutation
$\sigma$ and then repeatedly appending the next $l$ largest integers
(where $a\leq l\leq m-1$) in the order compatible with the pattern
$\sigma$\textemdash each iteration creates an additional marked occurrence
of $\sigma$. See Figure 2 for an illustration in the case $m=5$
and $a=3$. Thus, we have the formula 
\begin{align*}
\sum_{k=2}^{\infty}\sum_{\pi\in\mathfrak{S}_{k}}\sum_{c\in C_{\sigma,\pi}}s^{\mk_{\sigma}(c)}x^{k} & =\frac{sx^{m}}{1-s(x^{a}+x^{a+1}+\cdots+x^{m-1})}
\end{align*}
when $m-a\leq a$ (see also \cite[p. 357]{Elizalde2012}). 
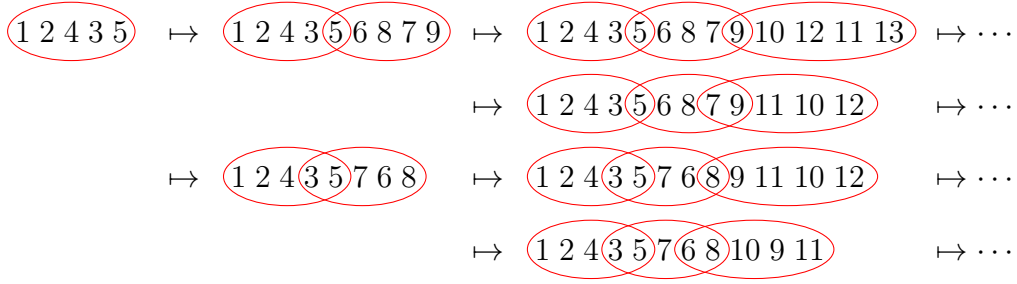
\begin{figure}
\begin{center}
\begin{tikzpicture}

\node at (0,0){
\begin{minipage}{\textwidth}
\begin{alignat*}{3} 
1\;2\;4\;3\;5\quad\mapsto\quad & 1\;2\;4\;3\;5\;6\;8\;7\;9 & \quad\mapsto\quad & 1\;2\;4\;3\;5\;6\;8\;7\;9\;10\;12\;11\;13 & \quad\mapsto\cdots\\[10pt]  &  & \quad\mapsto\quad & 1\;2\;4\;3\;5\;6\;8\;7\;9\;11\;10\;12 & \quad\mapsto\cdots\\[10pt] \mapsto\quad & 1\;2\;4\;3\;5\;7\;6\;8 & \quad\mapsto\quad & 1\;2\;4\;3\;5\;7\;6\;8\;9\;11\;10\;12 & \quad\mapsto\cdots\\[10pt]  &  & \quad\mapsto\quad & 1\;2\;4\;3\;5\;7\;6\;8\;10\;9\;11 & \quad\mapsto\cdots 
\end{alignat*}
\end{minipage}
};


\draw[red] (-5.89,1.2) ellipse (24bp and 11bp);
\draw[red] (-3.02,1.2) ellipse (24bp and 11bp);
\draw[red] (-1.7,1.2) ellipse (24bp and 11bp);
\draw[red] (1.02,1.2) ellipse (24bp and 11bp);
\draw[red] (2.32,1.2) ellipse (24bp and 11bp);
\draw[red] (4.05,1.2) ellipse (36.5bp and 11bp);

\draw[red] (1.02,0.24) ellipse (24bp and 11bp);
\draw[red] (2.32,0.24) ellipse (24bp and 11bp);
\draw[red] (3.63,0.24) ellipse (34bp and 11bp);

\draw[red] (-3.02,-0.73) ellipse (24bp and 11bp);
\draw[red] (-2.02,-0.73) ellipse (24bp and 11bp);
\draw[red] (1.02,-0.73) ellipse (24bp and 11bp);
\draw[red] (2.01,-0.73) ellipse (24bp and 11bp);
\draw[red] (3.63,-0.73) ellipse (34bp and 11bp);

\draw[red] (1.02,-1.7) ellipse (24bp and 11bp);
\draw[red] (2.01,-1.7) ellipse (24bp and 11bp);
\draw[red] (3.19,-1.7) ellipse (30bp and 11bp);

\end{tikzpicture}
\end{center}

\caption{$12435$-clusters}
\end{figure}

When $a\leq m-a$, we instead have $O_{\sigma}=\{m-a,m-a+1,\dots,m-1\}$,
and we can uniquely generate clusters in essentially the same way
as above; the only differences are that (1) $m-a\leq l\leq m-1$,
and (2) whenever we append $l=m-a$ integers to create a larger cluster,
we increase the last entry of the existing cluster by 1 so that the
new cluster contains an additional occurrence of $\sigma$. See Figure
3 for an illustration in the case $m=5$ and $a=2$. Thus, we have
the formula 
\begin{align*}
\sum_{k=2}^{\infty}\sum_{\pi\in\mathfrak{S}_{k}}\sum_{c\in C_{\sigma,\pi}}s^{\mk_{\sigma}(c)}x^{k} & =\frac{sx^{m}}{1-s(x^{m-a}+x^{m-a+1}+\cdots+x^{m-1})}
\end{align*}
when $a\leq m-a$. In either case, we have
\[
\sum_{k=2}^{\infty}\sum_{\pi\in\mathfrak{S}_{k}}\sum_{c\in C_{\sigma,\pi}}s^{\mk_{\sigma}(c)}x^{k}=\frac{sx^{m}}{1-s(x^{m-i}+x^{m-i+1}+\cdots+x^{m-1})}=\frac{sx^{m}}{1-s\sum_{l=1}^{i}x^{m-l}}
\]
where $i=\min(a,m-a)$.
\begin{figure}
\begin{center}
\begin{tikzpicture}

\node at (0,0){
\begin{minipage}{\textwidth}
\begin{alignat*}{3} 
1\;3\;2\;4\;5\quad\mapsto\quad & 1\;3\;2\;4\;5\;7\;6\;8\;9 & \quad\mapsto\quad & 1\;3\;2\;4\;5\;7\;6\;8\;9\;11\;10\;12\;13 & \quad\mapsto\cdots\\[10pt]  &  & \quad\mapsto\quad & 1\;3\;2\;4\;5\;7\;6\;8\;10\;9\;11\;12 & \quad\mapsto\cdots\\[10pt] \mapsto\quad & 1\;3\;2\;4\;6\;5\;7\;8 & \quad\mapsto\quad & 1\;3\;2\;4\;6\;5\;7\;8\;10\;9\;11\;12 & \quad\mapsto\cdots\\[10pt]  &  & \quad\mapsto\quad & 1\;3\;2\;4\;6\;5\;7\;9\;8\;10\;11 & \quad\mapsto\cdots 
\end{alignat*}
\end{minipage}
};


\draw[red] (-5.89,1.2) ellipse (24bp and 11bp);
\draw[red] (-3.02,1.2) ellipse (24bp and 11bp);
\draw[red] (-1.7,1.2) ellipse (24bp and 11bp);
\draw[red] (1.02,1.2) ellipse (24bp and 11bp);
\draw[red] (2.32,1.2) ellipse (24bp and 11bp);
\draw[red] (4.05,1.2) ellipse (36.5bp and 11bp);

\draw[red] (1.02,0.24) ellipse (24bp and 11bp);
\draw[red] (2.42,0.24) ellipse (27bp and 11bp);
\draw[red] (3.63,0.24) ellipse (34bp and 11bp);

\draw[red] (-3.02,-0.73) ellipse (24bp and 11bp);
\draw[red] (-2.02,-0.73) ellipse (24bp and 11bp);
\draw[red] (1.02,-0.73) ellipse (24bp and 11bp);
\draw[red] (2.01,-0.73) ellipse (24bp and 11bp);
\draw[red] (3.63,-0.73) ellipse (34bp and 11bp);

\draw[red] (1.02,-1.7) ellipse (24bp and 11bp);
\draw[red] (2.01,-1.7) ellipse (24bp and 11bp);
\draw[red] (3.16,-1.7) ellipse (30bp and 11bp);

\end{tikzpicture}
\end{center}

\caption{$13245$-clusters}
\end{figure}
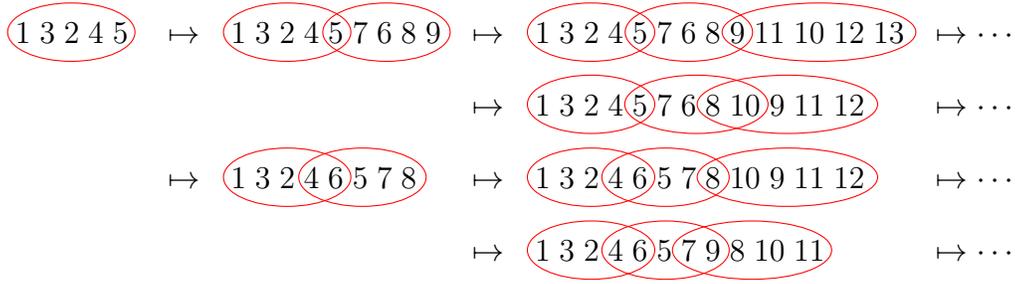

It is clear that the underlying permutation of any $\sigma$-cluster
is a product of disjoint elementary transpositions, hence an involution.
So, whenever $\pi$ is the underlying permutation of a $\sigma$-cluster,
we have $\des(\pi)=\ides(\pi)$, $\pk(\pi)=\ipk(\pi)$, and $\lpk(\pi)=\ilpk(\pi)$.
Each marked occurrence of a $\sigma$-cluster $c$ contributes exactly
one descent, which is also a peak and a left peak. Hence, we have
{\allowdisplaybreaks 
\begin{align*}
\sum_{k=2}^{\infty}R_{\sigma,k}^{\ides}(s,t)x^{k} & =\sum_{k=2}^{\infty}\sum_{\pi\in\mathfrak{S}_{k}}t^{\ides(\pi)+1}\sum_{c\in C_{12\cdots m,\pi}}s^{\mk_{12\cdots m}(c)}x^{k}\\
 & =t\sum_{k=2}^{\infty}\sum_{\pi\in\mathfrak{S}_{k}}\sum_{c\in C_{12\cdots m,\pi}}(st)^{\mk_{12\cdots m}(c)}x^{k}\\
 & =\frac{st^{2}x^{m}}{1-st\sum_{l=1}^{i}x^{m-l}}.
\end{align*}
Our formulas for $\sum_{k=2}^{\infty}R_{\sigma,k}^{\ipk}(s,t)x^{k}$
and $\sum_{k=2}^{\infty}R_{\sigma,k}^{\ilpk}(s,t)x^{k}$ are obtained
in the same way.}
\end{proof}

\subsection{Transpositional patterns and inverse descent number}

We now apply our results from Section 3.5 to the patterns $\sigma=12\cdots(a-1)(a+1)a(a+2)(a+3)\cdots m$
for arbitrary $m\geq5$ and $2\leq a\leq m-2$. All of these formulas
follow immediately from combining either Theorem \ref{t-gjcmides},
\ref{t-gjcmipk}, or \ref{t-gjcmilpk} with Lemma \ref{l-etR}, and
then setting $s=0$.
\begin{thm}
\label{t-transides} Let $\sigma=12\cdots(a-1)(a+1)a(a+2)(a+3)\cdots m$
where $m\geq5$ and $2\leq a\leq m-2$. Let $i=\min(a,m-a)$. We have
\leqnomode
\begin{alignat*}{1}
\tag{{a}}\sum_{n=0}^{\infty}\frac{A_{\sigma,n}^{\ides}(s,t)}{(1-t)^{n+1}}x^{n} & =\sum_{n=0}^{\infty}\Bigg(\frac{tx}{(1-t)^{2}}+\frac{(s-1)t^{2}z^{m}}{(1-t)(1-(s-1)t\sum_{l=1}^{i}z^{m-l})}\Bigg)^{*\left\langle n\right\rangle }
\end{alignat*}
and
\begin{alignat*}{1}
\tag{{b}}\sum_{n=0}^{\infty}\frac{A_{\sigma,n}^{\ides}(t)}{(1-t)^{n+1}}x^{n} & =\sum_{n=0}^{\infty}\Bigg(\frac{tx}{(1-t)^{2}}-\frac{t^{2}z^{m}}{(1-t)(1+t\sum_{l=1}^{i}z^{m-l})}\Bigg)^{*\left\langle n\right\rangle }
\end{alignat*}
where $z=x/(1-t)$.
\end{thm}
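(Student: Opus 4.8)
The plan is to obtain both parts as immediate consequences of Theorem \ref{t-gjcmides} (the $\ides$-specialization of the cluster method in $\mathbf{FQSym}$) combined with the closed-form cluster generating function supplied by Lemma \ref{l-etR}. No new combinatorics is required, since the structural work — the classification of $\sigma$-clusters via overlap sets, and the derivation of Theorem \ref{t-gjcmides} from Theorem \ref{t-gjcmfqsym} — has already been carried out.

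First I would instantiate Theorem \ref{t-gjcmides} at the singleton $\Gamma=\{\sigma\}$, obtaining
$$\sum_{n=0}^{\infty}\frac{A_{\sigma,n}^{\ides}(s,t)}{(1-t)^{n+1}}x^{n}=\sum_{n=0}^{\infty}\left(\frac{tx}{(1-t)^{2}}+\frac{1}{1-t}\sum_{k=2}^{\infty}R_{\sigma,k}^{\ides}(s-1,t)z^{k}\right)^{*\left\langle n\right\rangle },$$
with $z=x/(1-t)$. Next I would apply Lemma \ref{l-etR}: replacing $s$ by $s-1$ in the identity $\sum_{k\geq2}R_{\sigma,k}^{\ides}(s,t)x^{k}=st^{2}x^{m}/(1-st\sum_{l=1}^{i}x^{m-l})$ and renaming the dummy indeterminate from $x$ to $z$ gives
$$\sum_{k=2}^{\infty}R_{\sigma,k}^{\ides}(s-1,t)z^{k}=\frac{(s-1)t^{2}z^{m}}{1-(s-1)t\sum_{l=1}^{i}z^{m-l}}.$$
Multiplying this by $1/(1-t)$ and substituting into the previous display yields part (a) verbatim.

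For part (b) I would specialize part (a) at $s=0$. By the definition of $A_{\sigma,n}^{\ides}(t)$ as the $s=0$ evaluation of $A_{\sigma,n}^{\ides}(s,t)$, the left-hand side becomes $\sum_{n\geq0}A_{\sigma,n}^{\ides}(t)x^{n}/(1-t)^{n+1}$; on the right, the summand $(s-1)t^{2}z^{m}/\bigl((1-t)(1-(s-1)t\sum_{l}z^{m-l})\bigr)$ becomes $-t^{2}z^{m}/\bigl((1-t)(1+t\sum_{l=1}^{i}z^{m-l})\bigr)$, which is precisely the expression appearing in part (b).

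I anticipate no genuine obstacle: the entire argument is a formal manipulation of power series in $\mathbb{Q}[[s,t*,x]]$, and the only steps requiring a little care are verifying that the dummy-variable substitution $x\mapsto z$ inside the single-indeterminate identity of Lemma \ref{l-etR} is legitimate (it is, as that lemma is an identity of formal power series in one variable), and correctly tracking the sign change upon setting $s=0$.
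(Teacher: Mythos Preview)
Your proposal is correct and follows exactly the approach taken in the paper: combine Theorem \ref{t-gjcmides} with the closed form from Lemma \ref{l-etR} to obtain part (a), then specialize $s=0$ for part (b). The paper states this in a single sentence without writing out the substitutions, but your expanded version matches it step for step.
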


We use Theorem \ref{t-transides} to compute the first ten polynomials
$A_{13245,n}^{\ides}(t)$; see Table 9.

\renewcommand{\arraystretch}{1.2}

\begin{table}[H]
\centering{}%
\begin{tabular}{c|c}
$n$ & $A_{13245,n}^{\ides}(t)$\tabularnewline
\hline 
$0$ & 1\tabularnewline
$1$ & $t$\tabularnewline
$2$ & $t+t^{2}$\tabularnewline
$3$ & $t+4t^{2}+t^{3}$\tabularnewline
$4$ & $t+11t^{2}+11t^{3}+t^{4}$\tabularnewline
$5$ & $t+25t^{2}+66t^{3}+26t^{4}+t^{5}$\tabularnewline
$6$ & $t+53t^{2}+294t^{3}+302t^{4}+57t^{5}+t^{6}$\tabularnewline
$7$ & $t+108t^{2}+1125t^{3}+2368t^{4}+1191t^{5}+120t^{6}+t^{7}$\tabularnewline
$8$ & $t+215t^{2}+3934t^{3}+14923t^{4}+15363t^{5}+4293t^{6}+247t^{7}+t^{8}$\tabularnewline
$9$ & $t+422t^{2}+12985t^{3}+82066t^{4}+150240t^{5}+86954t^{6}+14608t^{7}+502t^{8}+t^{9}$\tabularnewline
\end{tabular}\vspace{5bp}
\caption{Distribution of $\protect\ides$ over $\mathfrak{S}_{n}(13245)$}
\end{table}
By the symmetry present in Theorem \ref{t-transides}, the polynomials
$A_{13245,n}^{\ides}(t)$ displayed above are the same as the polynomials
$A_{12435,n}^{\ides}(t)$ for corresponding $n$.

\subsection{Transpositional patterns and inverse peak number}
\begin{thm}
\label{t-transipk}Let $\sigma=12\cdots(a-1)(a+1)a(a+2)(a+3)\cdots m$
where $m\geq5$ and $2\leq a\leq m-2$. Let $i=\min(a,m-a)$. Then
\leqnomode
\begin{multline*}
\tag{{a}}\qquad\frac{1}{1-t}+\frac{1+t}{2(1-t)}\sum_{n=1}^{\infty}P_{\sigma,n}^{\ipk}(s,u)z^{n}\\
=\sum_{n=0}^{\infty}\Bigg(\frac{2tx}{(1-t)^{2}}+\frac{(1+t)(s-1)u^{2}z^{m}}{2(1-t)(1-(s-1)u\sum_{l=1}^{i}z^{m-l})}\Bigg)^{*\left\langle n\right\rangle }\qquad
\end{multline*}
and
\begin{align*}
\tag{{b}}\qquad\frac{1}{1-t}+\frac{1+t}{2(1-t)}\sum_{n=1}^{\infty}P_{\sigma,n}^{\ipk}(u)z^{n}
=\sum_{n=0}^{\infty}\Bigg(\frac{2tx}{(1-t)^{2}}-\frac{(1+t)u^{2}z^{m}}{2(1-t)(1+u\sum_{l=1}^{i}z^{m-l})}\Bigg)^{*\left\langle n\right\rangle }
\end{align*}
where $u=4t/(1+t)^{2}$ and $z=(1+t)x/(1-t)$.
\end{thm}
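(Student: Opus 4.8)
The plan is to follow the same route as the proof of Theorem \ref{t-monoipk}, feeding the transpositional cluster generating functions of Lemma \ref{l-etR} into the $\ipk$-specialization of the generalized cluster method. Concretely, I would apply Theorem \ref{t-gjcmipk} to the singleton $\Gamma=\{\sigma\}$ with $\sigma=12\cdots(a-1)(a+1)a(a+2)(a+3)\cdots m$, so that the left-hand side of part~(a) equals $\sum_{n=0}^{\infty}\bigl(\tfrac{2tx}{(1-t)^{2}}+\tfrac{1+t}{2(1-t)}\sum_{k=2}^{\infty}R_{\sigma,k}^{\ipk}(s-1,u)z^{k}\bigr)^{*\langle n\rangle}$, where $u=4t/(1+t)^{2}$ and $z=(1+t)x/(1-t)$; here I use that $\Psi_{\ipk}(\mathbf{G}_{1})=2tx/(1-t)^{2}$, exactly as in the proof of Theorem \ref{t-gjcmipk}.

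The one computational input is Lemma \ref{l-etR}, which with $i=\min(a,m-a)$ gives $\sum_{k=2}^{\infty}R_{\sigma,k}^{\ipk}(s,t)x^{k}=st^{2}x^{m}\big/\bigl(1-st\sum_{l=1}^{i}x^{m-l}\bigr)$. Substituting $s\mapsto s-1$, $t\mapsto u$, $x\mapsto z$ and multiplying by $\tfrac{1+t}{2(1-t)}$ turns the inner sum above into $\tfrac{(1+t)(s-1)u^{2}z^{m}}{2(1-t)(1-(s-1)u\sum_{l=1}^{i}z^{m-l})}$, which is precisely the second summand in part~(a); this proves~(a). Part~(b) is the $s=0$ evaluation of part~(a): setting $s=0$ replaces the factor $s-1$ in the numerator by $-1$ and the factor $1-(s-1)u\sum_{l=1}^{i}z^{m-l}$ in the denominator by $1+u\sum_{l=1}^{i}z^{m-l}$, yielding the stated formula.

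Both parts are thus immediate consequences of Theorem \ref{t-gjcmipk} and Lemma \ref{l-etR}, so I do not expect a genuine obstacle; the only thing requiring care is the bookkeeping of two nested changes of variable\textemdash the outer pair $t\mapsto u$, $x\mapsto z$ built into the statement of Theorem \ref{t-gjcmipk}, and the inner substitution $x\mapsto z$, $t\mapsto u$ into the closed form of Lemma \ref{l-etR}. In particular, as noted in the remarks after Theorem \ref{t-monoipk}, substitution does not commute with the Hadamard product, so all of these simplifications must be performed \emph{inside} the $*\langle n\rangle$ argument before the formulas are manipulated further.
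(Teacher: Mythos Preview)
Your proposal is correct and matches the paper's approach exactly: the paper states that all of the formulas in this section ``follow immediately from combining either Theorem \ref{t-gjcmides}, \ref{t-gjcmipk}, or \ref{t-gjcmilpk} with Lemma \ref{l-etR}, and then setting $s=0$,'' which is precisely what you do. Your additional remarks about the bookkeeping of the nested changes of variable are accurate and mirror the care taken in the proof of Theorem \ref{t-monoipk}.
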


We give the first ten polynomials $P_{13245,n}^{\ipk}(t)$\textemdash which
are also the first ten polynomials $P_{12435,n}^{\ipk}(t)$\textemdash in
Table 10.

\renewcommand{\arraystretch}{1.2}

\begin{table}[H]
\centering{}%
\begin{tabular}{c|ccc|c}
$n$ & $P_{13245,n}^{\ipk}(t)$ &  & $n$ & $P_{13245,n}^{\ipk}(t)$\tabularnewline
\cline{1-2} \cline{2-2} \cline{4-5} \cline{5-5} 
$0$ & 1 &  & $5$ & $16t+87t^{2}+16t^{3}$\tabularnewline
$1$ & $t$ &  & $6$ & $32t+408t^{2}+268t^{3}$\tabularnewline
$2$ & $2t$ &  & $7$ & $64t+1776t^{2}+2808t^{3}+266t^{4}$\tabularnewline
$3$ & $4t+2t^{2}$ &  & $8$ & $128t+7424t^{2}+23745t^{3}+7680t^{4}$\tabularnewline
$4$ & $8t+16t^{2}$ &  & $9$ & $256t+30336t^{2}+178029t^{3}+131542t^{4}+7616t^{5}$\tabularnewline
\end{tabular}\vspace{5bp}
\caption{Distribution of $\protect\ipk$ over $\mathfrak{S}_{n}(13245)$}
\end{table}

\subsection{Transpositional patterns and inverse left peak number}
\begin{thm}
\label{t-transilpk}Let $\sigma=12\cdots(a-1)(a+1)a(a+2)(a+3)\cdots m$
where $m\geq5$ and $2\leq a\leq m-2$. Let $i=\min(a,m-a)$. Then
\leqnomode
\begin{align*}
\tag{{a}}\frac{1}{1-t}\sum_{n=0}^{\infty}P_{\sigma,n}^{\ilpk}(s,u)z^{n} & =\sum_{n=0}^{\infty}\left(\frac{z}{1-t}+\frac{(s-1)uz^{m}}{(1-t)(1-(s-1)u\sum_{l=1}^{i}z^{m-l})}\right)^{*\left\langle n\right\rangle }
\end{align*}
and
\begin{align*}
\tag{{b}}\frac{1}{1-t}\sum_{n=0}^{\infty}P_{\sigma,n}^{\ilpk}(u)z^{n} & =\sum_{n=0}^{\infty}\left(\frac{z}{1-t}-\frac{uz^{m}}{(1-t)(1+u\sum_{l=1}^{i}z^{m-l})}\right)^{*\left\langle n\right\rangle }
\end{align*}
where $u=4t/(1+t)^{2}$ and $z=(1+t)x/(1-t)$.
\end{thm}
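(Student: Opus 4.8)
The plan is to derive Theorem~\ref{t-transilpk} by specializing Theorem~\ref{t-gjcmilpk} to the singleton $\Gamma=\{\sigma\}$ and substituting in the closed form for the refined $\ilpk$-cluster generating function provided by Lemma~\ref{l-etR}. Recall that Theorem~\ref{t-gjcmilpk} reads
\[
\frac{1}{1-t}\sum_{n=0}^{\infty}P_{\Gamma,n}^{\ilpk}(s,u)z^{n}=\sum_{n=0}^{\infty}\left(\frac{z}{1-t}+\frac{1}{1-t}\sum_{k=2}^{\infty}R_{\Gamma,k}^{\ilpk}(s-1,u)z^{k}\right)^{*\left\langle n\right\rangle}
\]
where $u=4t/(1+t)^{2}$ and $z=(1+t)x/(1-t)$, while Lemma~\ref{l-etR} gives $\sum_{k=2}^{\infty}R_{\sigma,k}^{\ilpk}(s,t)x^{k}=stx^{m}\big/\big(1-st\sum_{l=1}^{i}x^{m-l}\big)$ with $i=\min(a,m-a)$.

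First I would put $\Gamma=\{\sigma\}$ into Theorem~\ref{t-gjcmilpk}. The inner sum $\sum_{k=2}^{\infty}R_{\sigma,k}^{\ilpk}(s-1,u)z^{k}$ is obtained from the Lemma~\ref{l-etR} formula simply by replacing $s$ with $s-1$, $t$ with $u$, and $x$ with $z$ (the last two substitutions being precisely the ones built into the homomorphism $\Psi_{\ilpk}$); this yields $(s-1)uz^{m}\big/\big(1-(s-1)u\sum_{l=1}^{i}z^{m-l}\big)$. Substituting this into the right-hand side of Theorem~\ref{t-gjcmilpk} and absorbing the prefactor $1/(1-t)$ produces part~(a) verbatim. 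Part~(b) then follows by setting $s=0$ in part~(a): the factor $(s-1)u$ collapses to $-u$ in the numerator, and the summand $-(s-1)u\sum_{l}z^{m-l}$ in the denominator becomes $+u\sum_{l}z^{m-l}$, giving $z/(1-t)-uz^{m}\big/\big((1-t)(1+u\sum_{l=1}^{i}z^{m-l})\big)$ inside the $n$-fold Hadamard power.

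There is no real obstacle here; the argument is a routine unwinding of the two cited results. The only points that require attention are bookkeeping the variable changes $t\mapsto u$ and $x\mapsto z$ that the homomorphism $\Psi_{\ilpk}$ carries, so that the cluster generating function of Lemma~\ref{l-etR} is evaluated at the correct arguments before being inserted into Theorem~\ref{t-gjcmilpk}, and recalling the structural input already exploited in the proof of Lemma~\ref{l-etR}, namely that the underlying permutation of any $\sigma$-cluster is a product of disjoint elementary transpositions, hence an involution contributing exactly one left peak to each cluster; this is what makes the $\ilpk$ refinement of the $\sigma$-cluster generating function carry one fewer factor of $t$ than the $\ides$ and $\ipk$ refinements. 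Finally, since substitution does not commute with the Hadamard product, I would note, as the paper does after Theorem~\ref{t-monoipk}, that the resulting identities are stated in the variables $u$ and $z$, and that recovering the polynomials $P_{\sigma,n}^{\ilpk}(s,t)$ and $P_{\sigma,n}^{\ilpk}(t)$ themselves requires the further inversion of $u=4t/(1+t)^{2}$ described there.
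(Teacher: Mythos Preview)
Your proposal is correct and follows essentially the same approach as the paper, which states that the formulas in this section ``follow immediately from combining either Theorem~\ref{t-gjcmides}, \ref{t-gjcmipk}, or \ref{t-gjcmilpk} with Lemma~\ref{l-etR}, and then setting $s=0$.'' Your added remarks about the variable substitutions $t\mapsto u$, $x\mapsto z$ and about why the $\ilpk$ cluster polynomial carries one fewer factor of $t$ are accurate and make the bookkeeping explicit, but do not depart from the paper's route.
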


Table 11 lists the first ten polynomials $P_{13245,n}^{\ilpk}(t)=P_{12435,n}^{\ilpk}(t)$.

\renewcommand{\arraystretch}{1.2}

\begin{table}[H]
\centering{}%
\begin{tabular}{c|ccc|c}
$n$ & $P_{13245,n}^{\ilpk}(t)$ &  & $n$ & $P_{13245,n}^{\ilpk}(t)$\tabularnewline
\cline{1-2} \cline{2-2} \cline{4-5} \cline{5-5} 
$0$ & 1 &  & $5$ & $1+57t+61t^{2}$\tabularnewline
$1$ & $1$ &  & $6$ & $1+173t+473t^{2}+61t^{3}$\tabularnewline
$2$ & $1+t$ &  & $7$ & $1+516t+3030t^{2}+1367t^{3}$\tabularnewline
$3$ & $1+5t$ &  & $8$ & $1+1528t+17551t^{2}+18536t^{3}+1361t^{4}$\tabularnewline
$4$ & $1+18t+5t^{2}$ &  & $9$ & $1+4511t+95867t^{2}+198379t^{3}+49021t^{4}$\tabularnewline
\end{tabular}\vspace{5bp}
\caption{Distribution of $\protect\ilpk$ over $\mathfrak{S}_{n}(13245)$}
\end{table}

\section{Conclusion}

In summary, we have proven a lifting of Elizalde and Noy's adaptation
of the Goulden\textendash Jackson cluster method for permutations
to the Malvenuto\textendash Reutenauer algebra $\mathbf{FQSym}$.
By applying two homomorphisms to the cluster method in $\mathbf{FQSym}$,
we recover both Elizalde and Noy's cluster method and Elizalde's $q$-cluster
method as special cases. We have also defined several other homomorphisms,
by way of the theory of shuffle-compatibility, which lead to new specializations
of our generalized cluster method that keep track of various inverse
statistics. Finally, we applied these results to two families of patterns:
the monotone patterns $12\cdots m$ and $m\cdots21$, and the transpositional
patterns $12\cdots(a-1)(a+1)a(a+2)(a+3)\cdots m$ where $m\geq5$
and $2\leq a\leq m-2$.

We chose to study monotone patterns as well as the transpositional
patterns of the form above because, for these patterns, it is easy
to count clusters by the inverse statistics that we consider. In particular,
these patterns have two nice properties:
\begin{enumerate}
\item These patterns are \textit{chain patterns}. Elizalde and Noy \cite{Elizalde2012}
showed that counting clusters is equivalent to counting linear extensions
in a certain poset, and the poset associated with a chain pattern
is a chain. This means that if we fix the length of a cluster as well
as the positions of the marked occurrences within the cluster, then
there is at most one cluster of that length and with that set of positions.
\item Clusters formed from any one of these patterns are involutions, so
counting clusters by $\ist$ is the same as counting them by $\st$.
\end{enumerate}
In forthcoming work, joint with Sergi Elizalde and Justin Troyka,
we study the transpositional patterns $2134\cdots m$ and $12\cdots(m-2)m(m-1)$
for $m\geq3$. Interestingly, the enumeration of $2134\cdots m$-clusters
and $12\cdots(m-2)m(m-1)$-clusters by $\ides$, $\ipk$, and $\ilpk$
turns out to have connections to generalized Stirling permutations
\cite{Gessel2020a} and $1/k$-Eulerian polynomials \cite{Savage2012}.
Although these are not chain patterns and their clusters are not involutions,
they are examples of \textit{non-overlapping patterns}: patterns whose
overlap set is equal to $\{m-1\}$. Both the non-overlapping condition
and the condition of being a chain pattern greatly restrict how clusters
can be formed, making them easier to characterize and thus more amenable
to study. As such, one direction of future work is to apply our results
to other families of non-overlapping patterns and chain patterns.

We also present the following conjecture, which is suggested by computational
evidence.
\begin{conjecture}
\label{cj-realroots} Let $\sigma$ be $12\cdots m$ or $m\cdots21$
where $m\geq3$, or $12\cdots(a-1)(a+1)a(a+2)(a+3)\cdots m$ where
$m\geq5$ and $2\leq a\leq m-2$. Then the polynomials $A_{\sigma,n}^{\ides}(t)$,
$P_{\sigma,n}^{\ipk}(t)$, and $P_{\sigma,n}^{\ilpk}(t)$ have only
real roots for all $n\geq2$.
\end{conjecture}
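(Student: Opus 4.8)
The plan is to treat the three families $A_{\sigma,n}^{\ides}(t)$, $P_{\sigma,n}^{\ipk}(t)$, $P_{\sigma,n}^{\ilpk}(t)$ through the common generating-function shape produced by Theorems~\ref{t-incides}, \ref{t-transides}, \ref{t-monoipk}, \ref{t-transipk}, \ref{t-incilpk}, \ref{t-decilpk}, and \ref{t-transilpk}: after setting $s=0$, each of these identities writes a $(1-t)$-normalized generating function $\sum_{n\ge 0} p_{n}(t)\,x^{n}$ of the target polynomials as $\sum_{n\ge 0} K(z,t)^{*\langle n\rangle}$, where $z$ is $x/(1-t)$ (or $(1+t)x/(1-t)$ in the peak cases) and $K$ is an explicit \emph{rational} function of $z$ and $t$ --- for instance $K = tz(1-z^{m-1})/((1-t)(1-z^{m}))$ for $A_{12\cdots m,n}^{\ides}(t)$. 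Since $\sum_{n} K^{*\langle n\rangle} = (\tfrac{1}{1-t}-K)^{*\langle -1\rangle}$ in $\mathbb{Q}[[t*,x]]$, extracting the coefficient of $x^{N}$ yields $p_{N} = \sum_{j\ge 1}\kappa_{j}\,*_{t}\,p_{N-j}$, where each $\kappa_{j}(t) = [x^{j}]K$ has the form $\pm\, t^{a}(1+t)^{b}/(1-t)^{c}$. The first step is to convert this into an honest recursion for the polynomials themselves: Hadamard product in $t$ against $t^{a}(1+t)^{b}/(1-t)^{c}$ is a fixed polynomial differential operator in $\theta = t\,\frac{d}{dt}$ (e.g.\ $\tfrac{t}{(1-t)^{c+1}}\,*_{t}\,f = \tfrac{1}{c!}\,\theta(\theta+1)\cdots(\theta+c-1)f$), so after multiplying through by the appropriate power of $1-t$ one obtains, via Leibniz, a recursion for $A_{\sigma,n}^{\ides}(t)$, $P_{\sigma,n}^{\ipk}(t)$, $P_{\sigma,n}^{\ilpk}(t)$ whose coefficients are polynomials in $t$ and in $\frac{d}{dt}$.

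The hard part --- and presumably the reason the statement is left as a conjecture --- is that this recursion has \emph{unbounded} order: the factor $1/(1-z^{m})$ in $K$ makes $\kappa_{jm}$ and $\kappa_{jm+1}$ nonzero for every $j\ge 1$, so a priori $p_{N}$ depends on $p_{N-1}, p_{N-m}, p_{N-m-1}, p_{N-2m}, p_{N-2m-1},\dots$, and the usual interlacing argument does not apply to such a recursion. I expect that overcoming this is the crux. The plan is to ``close up'' the recursion: since the offending terms come from a single geometric series, one should be able to introduce an auxiliary sequence of polynomials that absorbs $1/(1-z^{m})$ (equivalently, isolate the part of the functional equation with bounded $x$-degree), so that the target polynomials together with this auxiliary sequence satisfy a \emph{system} of recursions of bounded order $m$, still with polynomial-in-$t$, differential coefficients. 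For the transpositional patterns the same manipulation is carried out with the kernel of Lemma~\ref{l-etR}; this forces the case split $i = \min(a,m-a)$ but is otherwise parallel. An alternative route, should the recursion prove intractable, is to show directly that the coefficient sequence of each target polynomial is a P\'olya frequency sequence by realizing the Hadamard-power generating function as a limit of Hadamard products of P\'olya frequency sequences --- here the obstacle is the sign cancellation forced by the mixed-sign coefficients of $K$ (e.g.\ $z(1-z^{m-1})/(1-z^{m}) = z - z^{m} + z^{m+1} - z^{2m} + \cdots$).

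Once a bounded-order recursion $p_{N} = f_{1}p_{N-1} + \cdots + f_{m}p_{N-m}$, with the $f_{i}$ polynomial-and-differential operators in $t$, is in hand, the conclusion should follow by an interlacing induction: maintaining as inductive hypothesis that a sliding window of consecutive $p_{N}$ consists of real-rooted polynomials that pairwise interlace, one checks that the operators $f_{i}$ --- multiplication by polynomials with only nonpositive roots, together with $c+\theta$ --- preserve this structure, so that $p_{N}$ is again real-rooted and interlaces its predecessors; this is where one invokes the criteria of Br\"and\'en and of Liu--Wang on linear combinations of interlacing polynomials. The secondary obstacle is verifying the precise sign hypotheses those criteria demand: if the $f_{i}$ do not cooperate directly, one falls back to a multivariate refinement and a real-stability argument, or --- for $\ipk$ and $\ilpk$ specifically --- uses the substitution $u = 4t/(1+t)^{2}$ relating the peak generating functions to the $\ides$ one, together with the classical fact that this substitution preserves real-rootedness of the associated homogenized polynomials, thereby deducing the peak cases from the descent case. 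Finally, the restriction $n\ge 2$ merely reflects the base of the induction: $A_{\sigma,1}^{\ides}(t) = t$, $P_{\sigma,1}^{\ipk}(t) = t$, and $P_{\sigma,1}^{\ilpk}(t) = 1$ are trivially real-rooted or constant, so a short direct check disposes of the small cases before the recursion takes over.
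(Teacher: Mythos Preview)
The statement you are attempting to prove is presented in the paper as a \emph{conjecture}, supported only by computational evidence; the paper does not contain a proof, so there is nothing to compare your proposal against on that front.

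Your proposal, for its part, is not a proof but a research outline with the crucial steps left unresolved. You correctly identify that the Hadamard-product identities yield a recursion of unbounded order (because of the factor $1/(1-z^{m})$), and you propose to ``close it up'' by introducing an auxiliary sequence --- but you do not construct that sequence or verify that the resulting system has bounded order with coefficients of the required shape. Even granting such a recursion, the interlacing induction you sketch depends on the sign hypotheses of the Br\"and\'en and Liu--Wang criteria, which you yourself flag as an unresolved ``secondary obstacle''; the alternating signs in $z(1-z^{m-1})/(1-z^{m}) = z - z^{m} + z^{m+1} - \cdots$ are precisely the kind of obstruction that can make such criteria inapplicable. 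Finally, the proposed reduction of the $\ipk$ and $\ilpk$ cases to the $\ides$ case via the substitution $u = 4t/(1+t)^{2}$ is not justified: that substitution relates the generating functions at the level of the shuffle-algebra homomorphisms, but there is no general statement guaranteeing that real-rootedness of $A_{\sigma,n}^{\ides}(t)$ implies real-rootedness of $P_{\sigma,n}^{\ipk}(t)$ or $P_{\sigma,n}^{\ilpk}(t)$ for a fixed pattern $\sigma$. In short, you have sketched a plausible line of attack, but every substantive step remains open, and so does the conjecture.
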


In particular, Conjecture \ref{cj-realroots} would imply that\textemdash for
all patterns $\sigma$ considered in this paper\textemdash the polynomials
$A_{\sigma,n}^{\ides}(t)$, $P_{\sigma,n}^{\ipk}(t)$, and $P_{\sigma,n}^{\ilpk}(t)$
are unimodal and log-concave, and that the distributions of the statistics
$\ides$, $\ipk$, and $\ilpk$ over $\mathfrak{S}_{n}(\sigma)$ converge
to a normal distribution as $n\rightarrow\infty$. It is worth noting
that the Eulerian, peak, and left peak polynomials {\allowdisplaybreaks
\begin{align*}
A_{n}(t) & \coloneqq\sum_{\pi\in\mathfrak{S}_{n}}t^{\des(\pi)+1}=\sum_{\pi\in\mathfrak{S}_{n}}t^{\ides(\pi)+1},\\
P_{n}^{\pk}(t) & \coloneqq\sum_{\pi\in\mathfrak{S}_{n}}t^{\pk(\pi)+1}=\sum_{\pi\in\mathfrak{S}_{n}}t^{\ipk(\pi)+1},\text{ and}\\
P_{n}^{\lpk}(t) & \coloneqq\sum_{\pi\in\mathfrak{S}_{n}}t^{\lpk(\pi)}=\sum_{\pi\in\mathfrak{S}_{n}}t^{\ilpk(\pi)}
\end{align*}
}are all real-rooted (see, e.g., \cite{Petersen2007,Petersen2015,Stembridge1997}).
In light of this fact, one might intuitively expect the polynomials
$A_{\sigma,n}^{\ides}(t)$, $P_{\sigma,n}^{\ipk}(t)$, and $P_{\sigma,n}^{\ilpk}(t)$
to be real-rooted as well, since avoiding a single consecutive pattern
is not a very restrictive condition (especially when compared to classical
pattern avoidance) and therefore might be expected to preserve unimodality
or asymptotic normality.

One can use the theory of shuffle-compatibility from \cite{Gessel2018}
to define other homomorphisms on $\mathbf{FQSym}$ which can be used
to count permutations by inverses of shuffle-compatible statistics
other than the ones we consider here. For example, the bistatistic
$(\pk,\des)$ is shuffle-compatible, so we can define a homomorphism
$\Psi_{(\ipk,\ides)}$ that can be used to produce yet another specialization
of our generalized cluster method that simultaneously refines by $\ipk$
and $\ides$. Finally, in a different direction, one may apply our
homomorphisms to other formulas in $\mathbf{FQSym}$ which lift classical
formulas in permutation enumeration\textemdash such as the lifting
of Andr\'e's exponential generating function considered in \cite{Josuat-Verges2012}\textemdash leading
to new refinements of these classical formulas by inverse statistics.

\bigskip{}

\noindent \textbf{Acknowledgements.} The author thanks Justin Troyka
for carefully reading a preliminary version of this paper and offering
numerous suggestions and corrections, two anonymous referees for providing
additional suggestions and corrections, and Sergi Elizalde for helpful
discussions relating to this work. The author is also grateful to
Ira Gessel, from whom he first learned about the Malvenuto\textendash Reutenauer
algebra and the Goulden\textendash Jackson cluster method. The author
was partially supported by an AMS-Simons Travel Grant.

\bibliographystyle{plain}
\bibliography{bibliography}

\end{document}